\spnewtheorem{main-theorem}[theorem]{Main Theorem}{\bfseries}{\itshape}
\newcommand{\keywords}[1]{\par\addvspace\baselineskip\noindent\keywordname\enspace\ignorespaces#1}
\newcommand*{\define}[1]{\emph{#1}}
\providecommand\iff{\DOTSB\;\Longleftrightarrow\;}
\providecommand\implies{\DOTSB\;\Longrightarrow\;}
\DeclarePairedDelimiter\set{\{}{\}} 
\DeclarePairedDelimiter\family{\{}{\}}
\mathchardef\breakingcomma\mathcode`\,
\newcommand*\ntuple[1]{\lparen\mathcode`\,=\string"8000 #1\rparen}
\newcommand*{\naturals}{\mathbb{N}}
\DeclareMathOperator{\Exists}{\exists}
\DeclareMathOperator{\ForEach}{\forall}
\newcommand*\Holds{:}
\newcommand*\SuchThat{:}
\newcommand*\leftaction{\triangleright}
\newcommand*\inducedleftaction{\mathbin{\protect\scalerel*{\blacktriangleright}{\triangleright}}}
\newcommand*\rightsemiaction{\mathbin{\protect\scalerel*{\trianglelefteqslant}{\rhd}}}
\newcommand*{\transpose}{\intercal} 
\newcommand*\suchthat{\mid} 
\newcommand*\from{\colon} 
\newcommand*\after{\circ}
\newcommand*\quotient{\slash}
\renewcommand*{\restriction}{\mathord{\upharpoonright}}
\newcommand*{\blank}{\mathord{\_}} 
\begin{document}

  \mainmatter

  \title{Cellular Automata on Group Sets and the Uniform Curtis-Hedlund-Lyndon Theorem}
  \titlerunning{Cellular Automata on Group Sets}
  \authorrunning{Cellular Automata on Group Sets}

  \author{Simon Wacker}

  \institute{%
    Karlsruhe Institute of Technology\\
    \mails\\
    \url{http://www.kit.edu}%
  }

  \maketitle

  \begin{abstract}
    We introduce cellular automata whose cell spaces are left homogeneous spaces and prove a uniform as well as a topological variant of the Curtis-Hedlund-Lyndon theorem. Examples of left homogeneous spaces are spheres, Euclidean spaces, as well as hyperbolic spaces acted on by isometries; vertex-transitive graphs, in particular, Cayley graphs, acted on by automorphisms; groups acting on themselves by multiplication; and integer lattices acted on by translations. 
    \keywords{cellular automata, group actions, Curtis-Hedlund-Lyndon theorem}
  \end{abstract}

  In the first chapter of the monograph \enquote{Cellular Automata and Groups}\cite{ceccherini-silberstein:coornaert:2010}, Tullio Ceccherini-Silberstein and Michel Coornaert develop the theory of cellular automata whose cell spaces are groups. Examples of groups are abound: The integer lattices and Euclidean spaces with addition (translation), the one-dimensional unit sphere embedded in the complex plane with complex multiplication (rotation), and the vertices of a Cayley graph with the group structure it encodes (graph automorphisms).

  Yet, there are many structured sets that do not admit a structure-preserving group structure. For example: Each Euclidean $n$-sphere, except for the zero-, one-, and three-dimensional, does not admit a topological group structure; the Petersen graph is not a Cayley graph and does thus not admit an edge-preserving group structure on its vertices. However, these structured sets can be acted on by subgroups of their automorphism group by function application. For example Euclidean $n$-spheres can be acted on by rotations about the origin and graphs can be acted on by edge-preserving permutations of their vertices.

  Moreover, there are structured groups that have more symmetries than can be expressed with the group structure. The integer lattices and the Euclidean spaces with addition, for example, are groups, but addition expresses only their translational symmetries and not their rotational and reflectional ones. Though, they can be acted on by arbitrary subgroups of their symmetry groups, like the ones generated by translations and rotations.

  The general notion that encompasses these structure-preserving actions is that of a group set, that is, a set that is acted on by a group. A group set $M$ acted on by $G$ such that for each tuple $(m, m') \in M \times M$ there is a symmetry $g \in G$ that transports $m$ to $m'$ is called \emph{left homogeneous space} and the action of $G$ on $M$ is said to be \emph{transitive}. In particular, groups are left homogeneous spaces --- they act on themselves on the left by multiplication.

  In this paper, we develop the theory of cellular automata whose cell spaces are left homogeneous spaces which culminates in the proof of a uniform and topological variant of a famous theorem by Morton Landers Curtis, Gustav Arnold Hedlund, and Roger Conant Lyndon from 1969, see the paper \enquote{Endomorphisms and automorphisms of the shift dynamical system}\cite{hedlund:1969}. The development of this theory is greatly inspired by \cite{ceccherini-silberstein:coornaert:2010}.

  These cellular automata are defined so that their global transition functions are equivariant under the induced group action on global configurations. Depending on the choice of the cell space, these actions may be plain translations but also rotations and reflections. Exemplary for the first case are integer lattices that are acted on by translations; and for the second case Euclidean $n$-spheres that are acted on by rotations, but also the two-dimensional integer lattice that is acted on by the group generated by translations and the rotation by 90\degree.

  S{\'{e}}bastien Moriceau defines and studies a more restricted notion of cellular automata over group sets in his paper \enquote{Cellular Automata on a $G$-Set}\cite{moriceau:2011}. He requires sets of states and neighbourhoods to be finite. His automata are the global transition functions of, what we call, semi-cellular automata with finite set of states and finite essential neighbourhood.

  His automata obtain the next state of a cell by shifting the global configuration such that the cell is moved to the origin, restricting that configuration to the neighbourhood of the origin, and applying the local transition function to that local configuration. Our automata obtain the next state of a cell by determining the neighbours of the cell, observing the states of that neighbours, and applying the local transition function to that observed local configuration. The results are the same but the viewpoints are different, which manifests itself in proofs and constructions.

  To determine the neighbourhood of a cell we let the relative neighbourhood semi-act on the right on the cell. That right semi-action is to the left group action what right multiplication is to the corresponding left group multiplication. Many properties of cellular automata are a consequence of the interplay between properties of that semi-action, shifts of global configurations, and rotations of local configurations. That semi-action also plays an important role in our definition of right amenable left group sets, see \cite{wacker:amenable:2016}, for which the Garden of Eden theorem holds, see \cite{wacker:garden:2016}, which states that each cellular automaton is surjective if and only if it is pre-injective. For example finitely right generated left homogeneous spaces of sub-exponential growth are right amenable, in particular, quotients of finitely generated groups of sub-exponential growth by finite subgroups acted on by left multiplication.

  In Sect.~\ref{sec:actions} we introduce left group actions and our prime example, which illustrates phenomena that cannot be encountered in groups acting on themselves on the left by multiplication. In Sect.~\ref{sec:semi-action} we introduce coordinate systems, cell spaces, and right quotient set semi-actions that are induced by left group actions and coordinate systems. In Sect.~\ref{sec:automata} we introduce semi-cellular and cellular automata. In Sect.~\ref{sec:invariances} we show that a global transition function does not depend on the choice of coordinate system, is equivariant under the induced left group action on global configurations, is determined by its behaviour in the origin, and that the composition of two global transition functions is a global transition function. In Sect.~\ref{sec:characterisation} we prove a uniform and a topological variant of the Curtis-Hedlund-Lyndon theorem, which characterise global transition functions of semi-cellular automata by uniform and topological properties respectively. And in Sect.~\ref{sec:invertibility} we characterise invertibility of semi-cellular automata.

  \section{Left Group Actions}
  \label{sec:actions}

  \begin{definition}
    Let $M$ be a set, let $G$ be a group, let $\leftaction$ be a map from $G \times M$ to $M$, and let $e_G$ be the neutral element of $G$. The map $\leftaction$ is called \define{left group action of $G$ on $M$}, the group $G$ is said to \define{act on $M$ on the left by $\leftaction$}, and the triple $\ntuple{M, G, \leftaction}$ is called \define{left group set} if and only if
    \begin{gather*}
      \ForEach m \in M \Holds e_G \leftaction m = m,\\
      \ForEach m \in M \ForEach g \in G \ForEach g' \in G \Holds g g' \leftaction m = g \leftaction (g' \leftaction m).
    \end{gather*}
  \end{definition}

  \begin{example}
  \label{ex:group:action}
    Let $G$ be a group. It acts on itself on the left by $\leftaction$ by multiplication.
  \end{example}

  \begin{example}
  \label{ex:sphere:action}
    Let $M$ be the Euclidean unit $2$-sphere, that is, the surface of the ball of radius $1$ in $3$-dimensional Euclidean space, and let $G$ be the rotation group. The group $G$ acts on $M$ on the left by $\leftaction$ by function application, that is, by rotation about the origin.
  \end{example}

  \begin{definition}
    Let $\leftaction$ be a left group action of $G$ on $M$ and let $H$ be a subgroup of $G$. The left group action $\leftaction\restriction_{H \times M}$ of $H$ on $M$ is denoted by $\leftaction_H$.
  \end{definition}

  \begin{definition}
    Let $\leftaction$ be a left group action of $G$ on $M$. It is called
    \begin{enumerate}
      \item \define{transitive} if and only if the set $M$ is non-empty and
            \begin{equation*}
              \ForEach m \in M \ForEach m' \in M \Exists g \in G \SuchThat g \leftaction m = m';
            \end{equation*}
      \item \define{free} if and only if
            \begin{equation*}
              \ForEach g \in G \ForEach g' \in G \Holds (\Exists m \in M \SuchThat g \leftaction m = g' \leftaction m) \implies g = g'.
            \end{equation*}
    \end{enumerate}
  \end{definition}

  \begin{example}
  \label{ex:group:transitive-and-free}
    In the situation of Example~\ref{ex:group:action}, the left group action is transitive and free.
  \end{example}

  \begin{example}
  \label{ex:sphere:transitive}
    In the situation of Example~\ref{ex:sphere:action}, the left group action is transitive but not free.
  \end{example}

  \begin{definition}
    Let $\ntuple{M, G, \leftaction}$ be a left group set. It is called \define{left homogeneous space} if and only if the left group action $\leftaction$ is transitive.
  \end{definition}


  \begin{definition}
    Let $\leftaction$ be a left group action of $G$ on $M$, and let $m$ and $m'$ be two elements of $M$.
    \begin{enumerate}
      \item The set $G \leftaction m = \set{g \leftaction m \suchthat g \in G}$ is called \define{orbit of $m$ under $\leftaction$}.
      \item The set $G_m = \set{g \in G \suchthat g \leftaction m = m}$ is called \define{stabiliser of $m$ under $\leftaction$}.
      \item The set $G_{m,m'} = \set{g \in G \suchthat g \leftaction m = m'}$ is called \define{transporter of $m$ to $m'$ under $\leftaction$}.
    \end{enumerate}
  \end{definition}

  \begin{example}
  \label{ex:group:orbit-stabiliser}
    In the situation of Example~\ref{ex:group:transitive-and-free}, each orbit is $G$ and each stabiliser is $\set{e_G}$. 
  \end{example}

  \begin{example}
  \label{ex:sphere:orbit-stabiliser}
    In the situation of Example~\ref{ex:sphere:transitive}, for each point $m \in M$, its orbit is $M$ and its stabiliser is the group of rotations about the line through the origin and itself.
  \end{example}

  \begin{lemma}
  \label{lem:stabiliser-vs-transporter}
    Let $\leftaction$ be a left group action of $G$ on $M$, let $m$ and $m'$ be two elements of $M$ that have the same orbit under $\leftaction$, and let $g$ be an element of $G_{m,m'}$. Then, $G_{m'} = g G_m g^{-1}$ and $g G_m = G_{m,m'} = G_{m'} g$. \qed
  \end{lemma}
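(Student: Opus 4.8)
The plan is to verify the three claimed equalities by direct computation from the two group-action axioms alone. The key preliminary observation is that the hypothesis $g \in G_{m,m'}$, that is, $g \leftaction m = m'$, is equivalent to $g^{-1} \leftaction m' = m$: applying $g^{-1}$ on the left and using the axioms yields $g^{-1} \leftaction m' = g^{-1} \leftaction (g \leftaction m) = (g^{-1} g) \leftaction m = e_G \leftaction m = m$. I will use this equivalence repeatedly. Note that the same-orbit hypothesis serves only to ensure $G_{m,m'} \neq \emptyset$, so that a witness $g$ exists at all; once $g$ is fixed, it plays no further role.

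First I would establish the coset identity $g G_m = G_{m,m'}$. For the inclusion $g G_m \subseteq G_{m,m'}$, take $h \in G_m$ and compute $(g h) \leftaction m = g \leftaction (h \leftaction m) = g \leftaction m = m'$. For the reverse inclusion, given $k \in G_{m,m'}$, the element $g^{-1} k$ fixes $m$, since $(g^{-1} k) \leftaction m = g^{-1} \leftaction (k \leftaction m) = g^{-1} \leftaction m' = m$; hence $k = g (g^{-1} k) \in g G_m$. The identity $G_{m,m'} = G_{m'} g$ is proved symmetrically: for one inclusion, $k \in G_{m'}$ gives $(k g) \leftaction m = k \leftaction (g \leftaction m) = k \leftaction m' = m'$; for the other, $h \in G_{m,m'}$ gives $(h g^{-1}) \leftaction m' = h \leftaction (g^{-1} \leftaction m') = h \leftaction m = m'$, so that $h \in G_{m'} g$.

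It remains to treat the conjugation formula $G_{m'} = g G_m g^{-1}$. This can be obtained by the classical argument — for $h \in G_m$ one checks $(g h g^{-1}) \leftaction m' = g \leftaction (h \leftaction (g^{-1} \leftaction m')) = g \leftaction (h \leftaction m) = g \leftaction m = m'$, and conversely any $k \in G_{m'}$ forces $g^{-1} k g \in G_m$ by the same substitutions — or, more economically, it can simply be read off from the two coset identities already proved: combining $g G_m = G_{m,m'}$ with $G_{m,m'} = G_{m'} g$ gives $g G_m = G_{m'} g$, which rearranges to $G_{m'} = g G_m g^{-1}$.

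I expect no genuine obstacle, as each step is a one-line application of the axioms. The only point demanding care is bookkeeping: one must apply each group element to the correct point and invoke $g^{-1} \leftaction m' = m$, rather than $g \leftaction m = m'$, at exactly those places where the computation tests an element against $m'$. Getting this direction consistently right is essentially the whole of the work.
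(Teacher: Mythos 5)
Your proof is correct and uses essentially the same method as the paper: element-wise verification of the coset identities directly from the action axioms. The only difference is the order of deduction (you prove $g G_m = G_{m,m'}$ and $G_{m,m'} = G_{m'} g$ directly and read off $G_{m'} = g G_m g^{-1}$ from them, whereas the paper proves the conjugation formula directly and deduces $g G_m = G_{m'} g$ from it), which is an immaterial reshuffling.
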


  \begin{proof}
    First, let $g' \in G_m$. Then,
    \begin{equation*}
      g g' g^{-1} \leftaction m' = g g' \leftaction m
                                 = g \leftaction m
                                 = m'.
    \end{equation*}
    Hence, $g g' g^{-1} \in G_{m'}$. In conclusion, $G_{m'} \supseteq g G_m g^{-1}$. Secondly, let $g' \in G_{m'}$. Then, as above, $g'' = g^{-1} g' g \in G_m$. Hence, $g' = g g'' g^{-1} \in g G_m g^{-1}$. In conclusion, $G_{m'} \subseteq g G_m g^{-1}$. To sum up, $G_{m'} = g G_m g^{-1}$. Moreover, $g G_m = G_{m'} g$.

    Thirdly, let $g' \in G_m$. Then, $g g' \in G_{m,m'}$. In conclusion, $g G_m \subseteq G_{m,m'}$. Lastly, let $g' \in G_{m,m'}$. Then, $g'' = g^{-1} g' \in G_m$. Hence, $g' = g g'' \in g G_m$. In conclusion, $g G_m \supseteq G_{m,m'}$. To sum up, $g G_m = G_{m,m'}$. \qed
  \end{proof}

  \begin{definition}
    Let $M$ and $M'$ be two sets, let $f$ be a map from $M$ to $M'$, and let $\leftaction$ be a left group action of $G$ on $M$. The map $f$ is called \define{$\leftaction$-invariant} if and only if
    \begin{equation*}
      \ForEach g \in G \ForEach m \in M \Holds f(g \leftaction m) = f(m).
    \end{equation*}
  \end{definition}

  \begin{definition}
    Let $M$ and $M'$ be two sets, let $f$ be a map from $M$ to $M'$, and let $\leftaction$ and $\leftaction'$ be two left group actions of $G$ on $M$ and $M'$ respectively. The map $f$ is called
            \define{$(\leftaction, \leftaction')$-equivariant} if and only if
            \begin{equation*}
              \ForEach g \in G \ForEach m \in M \Holds f(g \leftaction m) = g \leftaction' f(m);
            \end{equation*}
            and
            \define{$\leftaction$-equivariant} if and only if it is $(\leftaction, \leftaction')$-equivariant, $M = M'$, and $\leftaction = \leftaction'$. 
  \end{definition}

  \begin{lemma}
  \label{lem:inverse-is-equivariant}
    Let $f$ be a $(\leftaction, \leftaction')$-equivariant and bijective map from $M$ to $M'$. The inverse of $f$ is $(\leftaction', \leftaction)$-equivariant
  \end{lemma}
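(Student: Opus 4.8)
The plan is to unfold the definition of $(\leftaction', \leftaction)$-equivariance for the inverse map directly, reducing everything to the known equivariance of $f$ by exploiting bijectivity. Concretely, I must show that for each $g \in G$ and each $m' \in M'$ we have $f^{-1}(g \leftaction' m') = g \leftaction f^{-1}(m')$.

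First I would fix $g \in G$ and $m' \in M'$. Because $f$ is bijective, the element $m = f^{-1}(m')$ is well defined and satisfies $f(m) = m'$. The idea is to substitute $m' = f(m)$, push the action $\leftaction'$ across $f$ using the hypothesis, and then cancel $f$ against $f^{-1}$. The $(\leftaction, \leftaction')$-equivariance of $f$ gives $g \leftaction' f(m) = f(g \leftaction m)$, so that
\begin{equation*}
  f^{-1}(g \leftaction' m') = f^{-1}\bigl(g \leftaction' f(m)\bigr) = f^{-1}\bigl(f(g \leftaction m)\bigr) = g \leftaction m = g \leftaction f^{-1}(m').
\end{equation*}
The penultimate equality uses $f^{-1} \after f = \id_M$, which is available since $f$ is bijective, and the final equality is just the definition $m = f^{-1}(m')$. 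As $g$ and $m'$ were arbitrary, this is precisely the required equivariance.

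Honestly, this statement is routine and I do not expect a genuine obstacle; the only point requiring a little care is the direction in which the equivariance hypothesis is applied. One is tempted to start from $g \leftaction f^{-1}(m')$ and work forwards, but it is cleaner to start from $f^{-1}(g \leftaction' m')$, rewrite the argument of $f^{-1}$ via the hypothesis, and then cancel, as above. No properties beyond bijectivity of $f$ and the single equivariance identity are needed, so the proof is a short chain of equalities.
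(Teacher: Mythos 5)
Your proof is correct and is essentially identical to the paper's: both substitute $m' = f(f^{-1}(m'))$, apply the equivariance of $f$ to move the action inside, and cancel $f^{-1} \after f$. Your version is in fact slightly more careful with notation, since the paper's displayed chain writes the action on $M'$ as $\leftaction$ rather than $\leftaction'$.
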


  \begin{proof}
    For each $g \in G$ and each $m \in M$,
    \begin{equation*}
      f^{-1}(g \leftaction m)
      = f^{-1}(g \leftaction f(f^{-1}(m)))
      = f^{-1}(f(g \leftaction f^{-1}(m)))
      = g \leftaction f^{-1}(m). \tag*{\qed}
    \end{equation*}
  \end{proof}

  \begin{lemma}
  \label{lem:induced-left-group-action-on-quotient-set}
    Let $G$ be a group and let $H$ be a subgroup of $G$. The group $G$ acts transitively on the quotient set $G \quotient H$ on the left by
    \begin{align*}
      \cdot \from G \times G \quotient H &\to     G \quotient H,\\
                               (g, g' H) &\mapsto g g' H.
    \end{align*}
  \end{lemma}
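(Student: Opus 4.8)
The plan is to establish three things in turn: that the proposed map is well-defined, that it satisfies the two defining axioms of a left group action, and that this action is transitive. Since the map is specified through a representative $g'$ of the coset $g' H$, the only point demanding genuine care is well-definedness; the axioms and transitivity will then follow routinely from the group structure of $G$.

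First I would check well-definedness. Suppose $g'$ and $g''$ are two representatives of the same coset, that is, $g' H = g'' H$. Then $(g')^{-1} g'' \in H$, so that $g g'' = (g g')\,((g')^{-1} g'')$ lies in $g g' H$, whence $g g'' H = g g' H$. Thus the value of the map does not depend on the chosen representative. Next I would verify the axioms directly. For the neutral element, $e_G \cdot g' H = e_G g' H = g' H$ for every coset $g' H$. For the compatibility condition, associativity of multiplication in $G$ gives, for all $g_1, g_2 \in G$ and every coset $g' H$, the chain $g_1 g_2 \cdot g' H = (g_1 g_2) g' H = g_1 (g_2 g') H = g_1 \cdot (g_2 \cdot g' H)$.

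Finally I would address transitivity. The quotient set $G \quotient H$ is non-empty because it contains the coset $H = e_G H$. Given two cosets $g_1 H$ and $g_2 H$, the element $g = g_2 g_1^{-1} \in G$ transports the first to the second, since $g \cdot g_1 H = g_2 g_1^{-1} g_1 H = g_2 H$. This exhibits, for each pair of elements of $G \quotient H$, a group element mapping one to the other, which is exactly transitivity. I expect well-definedness to be the only step that requires genuine thought; everything else reduces to associativity, the identity law, and the coset characterisation $g' H = g'' H \iff (g')^{-1} g'' \in H$.
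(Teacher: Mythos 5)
Your proposal is correct and follows essentially the same route as the paper's proof: well-definedness via the coset criterion $g'H = g''H \iff (g')^{-1}g'' \in H$, the two action axioms by direct computation from associativity and the identity law, and transitivity via the transporter $g_2 g_1^{-1}$. The only difference is that you spell out the well-definedness step in more detail where the paper simply asserts the equivalence $g_1'H = g_2'H \iff g g_1' H = g g_2' H$.
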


  \begin{proof}
    The map $\cdot$ is well-defined, because, for each $g \in G$, each $g_1' \in G$, and each $g_2' \in G$,
    \begin{equation*}
      g_1' H = g_2' H \iff g g_1' H = g g_2' H.
    \end{equation*}
    It is a left group action, because, for each $g' H \in G \quotient H$, 
    \begin{equation*}
      e_G \cdot g' H = g' H,
    \end{equation*}
    and, for each $g_1 \in G$, each $g_2 \in G$, and each $g' H \in G \quotient H$,
    \begin{align*}
      g_1 g_2 \cdot g' H &= g_1 g_2 g' H\\
                         &= g_1 \cdot g_2 g' H\\
                         &= g_1 \cdot (g_2 \cdot g' H).
    \end{align*}
    It is transitive, because, for each $g_1' H \in G \quotient H$ and each $g_2' H \in G \quotient H$,
    \begin{equation*}
      g_2' {g_1'}^{-1} \cdot g_1' H = g_2' H. \tag*{\qed}
    \end{equation*}
  \end{proof}

  \begin{lemma}
  \label{lem:iota-is-bijective-and-equivariant}
    Let $\leftaction$ be a transitive left group action of $G$ on $M$, let $m_0$ be an element of $M$, and let $G_0$ be the stabiliser of $m_0$ under $\leftaction$. The map
    \begin{align*}
      \iota \from M &\to     G \quotient G_0,\\
                  m &\mapsto G_{m_0, m},
    \end{align*}
    is $(\leftaction, \cdot)$-equivariant and bijective. 
  \end{lemma}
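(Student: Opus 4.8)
The plan is to reduce everything to one observation coming from transitivity: since the action is transitive, every $m \in M$ lies in the orbit of $m_0$, so the transporter $G_{m_0, m}$ is non-empty. Picking any $g \in G_{m_0, m}$ and applying Lemma~\ref{lem:stabiliser-vs-transporter} (with the roles of $m$ and $m'$ there played by $m_0$ and $m$) yields $G_{m_0, m} = g G_0$. This settles well-definedness, because it certifies that $G_{m_0, m}$ is genuinely a single left coset of $G_0$ and hence an element of $G \quotient G_0$. The handy characterisation I would record for reuse is that $\iota(m) = g G_0$ for \emph{any} $g$ with $g \leftaction m_0 = m$.

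For equivariance I would fix $g \in G$ and $m \in M$ and compare $\iota(g \leftaction m)$ with $g \cdot \iota(m)$. Choosing some $h$ with $h \leftaction m_0 = m$, the characterisation gives $\iota(m) = h G_0$, so $g \cdot \iota(m) = g h G_0$ by definition of the quotient action $\cdot$ from Lemma~\ref{lem:induced-left-group-action-on-quotient-set}. On the other hand, from $g \leftaction m = g \leftaction (h \leftaction m_0) = (g h) \leftaction m_0$ I read off $g h \in G_{m_0, g \leftaction m}$, whence the same characterisation gives $\iota(g \leftaction m) = g h G_0$. The two sides agree, which is exactly $(\leftaction, \cdot)$-equivariance.

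Bijectivity splits into two short arguments. For injectivity, if $\iota(m) = \iota(m')$, then the common coset is non-empty, so some $g$ transports $m_0$ to both $m$ and $m'$, forcing $m = g \leftaction m_0 = m'$. For surjectivity, given an arbitrary coset $g G_0 \in G \quotient G_0$, I would simply set $m := g \leftaction m_0$; then $g \in G_{m_0, m}$, so $\iota(m) = g G_0$ again by the characterisation.

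The computations here are all routine, so there is no genuine obstacle; the one step that deserves care is well-definedness, where one must not treat $\iota(m)$ as a coset by fiat but actually invoke Lemma~\ref{lem:stabiliser-vs-transporter} to certify that the transporter coincides with the left coset $g G_0$. Everything else follows by unwinding the definitions of transporter, stabiliser, and the action $\cdot$.
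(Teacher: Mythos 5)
Your proof is correct and follows essentially the same route as the paper's: both establish equivariance via the identity $G_{m_0, g \leftaction m} = g\, G_{m_0, m}$ (you make the underlying appeal to Lemma~\ref{lem:stabiliser-vs-transporter} explicit, which the paper leaves implicit), and both prove injectivity by noting that a common transporter element forces $m = m'$ and surjectivity by taking $m = g \leftaction m_0$. Your extra care about well-definedness of $\iota$ as a map into $G \quotient G_0$ is a welcome addition but not a different argument.
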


  \begin{proof}
    For each $g \in G$ and each $m \in M$,
    \begin{equation*}
      \iota(g \leftaction m)
      = G_{m_0, g \leftaction m}
      = g \cdot G_{m_0, m}
      = g \cdot \iota(m).
    \end{equation*}
    Hence, $\iota$ is $(\leftaction, \cdot)$-equivariant. Moreover, for each $(m, m') \in M \times M$ with $m \neq m'$, we have $G_{m_0, m} \neq G_{m_0, m'}$. Thus, $\iota$ is injective. Furthermore, for each $g G_0 \in G \quotient G_0$, we have $G_{m_0, g \leftaction m_0} = g G_0$. Therefore, $\iota$ is surjective. \qed
  \end{proof}

  \section{Right Quotient Set Semi-Actions}
  \label{sec:semi-action}

  \begin{definition}
  \label{def:coordinate-system}
    Let $\mathcal{M} = \ntuple{M, G, \leftaction}$ be a left homogeneous space, let $m_0$ be an element of $M$, let $g_{m_0, m_0}$ be the neutral element of $G$, and, for each element $m \in M \setminus \set{m_0}$, let $g_{m_0, m}$ be an element of $G$ such that $g_{m_0, m} \leftaction m_0 = m$. The tuple $\mathcal{K} = \ntuple{m_0, \family{g_{m_0, m}}_{m \in M}}$ is called \define{coordinate system for $\mathcal{M}$}; the element $m_0$ is called \define{origin}; for each element $m \in M$, the element $g_{m_0, m}$ is called \define{coordinate of $m$}; for each subgroup $H$ of $G$, the stabiliser of the origin $m_0$ under $\leftaction_H$, which is $G_{m_0} \cap H$, is denoted by $H_0$.
  \end{definition}

  \begin{definition} 
  \label{def:cell-space}
    Let $\mathcal{M} = \ntuple{M, G, \leftaction}$ be a left homogeneous space and let $\mathcal{K} = \ntuple{m_0, \family{g_{m_0, m}}_{m \in M}}$ be a coordinate system for $\mathcal{M}$. The tuple $\mathcal{R} = \ntuple{\mathcal{M}, \mathcal{K}}$ is called \define{cell space}, each element $m \in M$ is called \define{cell}, and each element $g \in G$ is called \define{symmetry}.
  \end{definition} 

  \begin{example}
  \label{ex:group:cell-space}
    In the situation of Example~\ref{ex:group:orbit-stabiliser}, let $m_0$ be the neutral element $e_G$ of $G$ and, for each element $m \in G$, let $g_{m_0, m}$ be the only element in $G$ such that $g_{m_0, m} m_0 = m$, namely $m$. The tuple $\mathcal{K} = \ntuple{m_0, \family{g_{m_0, m}}_{m \in M}}$ is a coordinate system of $\mathcal{M} = \ntuple{G, G, \cdot}$ and the tuple $\mathcal{R} = \ntuple{\mathcal{M}, \mathcal{K}}$ is a cell space.
  \end{example}

  \begin{example}
  \label{ex:sphere:cell-space}
    In the situation of Example~\ref{ex:sphere:orbit-stabiliser}, let $m_0$ be the north pole $(0,0,1)^\transpose$ of $M$ and, for each point $m \in M$, let $g_{m_0, m}$ be a rotation about an axis in the $(x, y)$-plane that rotates $m_0$ to $m$. Note that $g_{m_0, m_0}$ is the trivial rotation. The tuple $\mathcal{K} = \ntuple{m_0, \family{g_{m_0, m}}_{m \in M}}$ is a coordinate system of $\mathcal{M} = \ntuple{M, G, \leftaction}$ and the tuple $\mathcal{R} = \ntuple{\mathcal{M}, \mathcal{K}}$ is a cell space.
  \end{example}

  In the remainder of this section, let $\mathcal{R} = \ntuple{\ntuple{M, G, \leftaction}, \ntuple{m_0, \family{g_{m_0, m}}_{m \in M}}}$ be a cell space.

  \begin{lemma}
  \label{lem:liberation}
    The map
    \begin{align*}
      \rightsemiaction \from M \times G \quotient G_0 &\to     M,\\
                                           (m, g G_0) &\mapsto g_{m_0, m} g g_{m_0, m}^{-1} \leftaction m\ (= g_{m_0, m} g \leftaction m_0),
    \end{align*}
    is a \define{right quotient set semi-action of $G \quotient G_0$ on $M$ with defect $G_0$}, which means that, for each subgroup $H$ of $G$ such that $\set{g_{m_0, m} \suchthat m \in M} \subseteq H$,
    \begin{gather*}
      \ForEach m \in M \Holds m \rightsemiaction G_0 = m,\\
      \ForEach m \in M \ForEach h \in H \Exists h_0 \in H_0 \SuchThat \ForEach \mathfrak{g} \in G \quotient G_0 \Holds
            m \rightsemiaction h \cdot \mathfrak{g} = (m \rightsemiaction h G_0) \rightsemiaction h_0 \cdot \mathfrak{g}.
    \end{gather*} 
  \end{lemma}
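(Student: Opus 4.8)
The plan is to verify, in order, that $\rightsemiaction$ is well-defined on cosets, that its two displayed expressions agree, and then to check the two semi-action axioms; the second axiom carries all the content. For well-definedness, if $g G_0 = g' G_0$ then $g' = g h_0$ for some $h_0 \in G_0 = G_{m_0}$, so $g' \leftaction m_0 = g \leftaction (h_0 \leftaction m_0) = g \leftaction m_0$, and hence $g_{m_0, m} g' \leftaction m_0 = g_{m_0, m} g \leftaction m_0$. The equality of the two formulas follows from $g_{m_0, m}^{-1} \leftaction m = m_0$ (itself a consequence of $g_{m_0, m} \leftaction m_0 = m$), which gives $g_{m_0, m} g g_{m_0, m}^{-1} \leftaction m = g_{m_0, m} g \leftaction m_0$. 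The first axiom is then immediate: $G_0$ is the coset $e_G G_0$, so $m \rightsemiaction G_0 = g_{m_0, m} e_G \leftaction m_0 = g_{m_0, m} \leftaction m_0 = m$.

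For the second axiom, fix a subgroup $H$ with $\{g_{m_0, m} \mid m \in M\} \subseteq H$, a cell $m$, and an element $h \in H$. Writing $m' = m \rightsemiaction h G_0 = g_{m_0, m} h \leftaction m_0$, I would take the candidate $h_0 = g_{m_0, m'}^{-1} g_{m_0, m} h$, which depends only on $m$ and $h$, not on $\mathfrak{g}$, exactly as the quantifier order $\Exists h_0 \dots \ForEach \mathfrak{g}$ requires. First check $h_0 \in H_0 = G_{m_0} \cap H$: it lies in $H$ because $g_{m_0, m'}$, $g_{m_0, m}$, and $h$ all do, and it fixes the origin because $h_0 \leftaction m_0 = g_{m_0, m'}^{-1} \leftaction m' = m_0$ by the defining property of the coordinate $g_{m_0, m'}$. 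The identity $g_{m_0, m'} h_0 = g_{m_0, m} h$ then finishes the job: for any $\mathfrak{g} = g' G_0$, using $h \cdot \mathfrak{g} = h g' G_0$ and $h_0 \cdot \mathfrak{g} = h_0 g' G_0$ from Lemma~\ref{lem:induced-left-group-action-on-quotient-set}, the left-hand side equals $g_{m_0, m} h g' \leftaction m_0$ and the right-hand side equals $g_{m_0, m'} h_0 g' \leftaction m_0 = g_{m_0, m} h g' \leftaction m_0$, so the two coincide.

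The main obstacle, modest as it is, lies in isolating the correct $h_0$ and recognizing that it must land in $H_0$ rather than merely in $G_0$; this is precisely the \enquote{defect} phenomenon named in the statement. It hinges on the hypothesis $\{g_{m_0, m} \mid m \in M\} \subseteq H$, which is what forces both coordinates $g_{m_0, m}$ and $g_{m_0, m'}$ to stay inside $H$ and thereby keeps $h_0$ in $H_0$. Once $h_0$ is in hand, the verification is a single rewrite that is uniform in $\mathfrak{g}$, so no further case analysis is needed.
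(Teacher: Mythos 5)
Your proposal is correct and follows essentially the same route as the paper: the same choice $h_0 = g_{m_0, m \rightsemiaction h G_0}^{-1}\, g_{m_0, m}\, h$, the same verification that it lies in $H_0$, and the same single rewrite $g_{m_0, m'} h_0 g' \leftaction m_0 = g_{m_0, m} h g' \leftaction m_0$ uniform in $\mathfrak{g}$. The only difference is that you additionally spell out well-definedness on cosets and the agreement of the two displayed formulas, which the paper leaves implicit.
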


  \begin{proof}
    For each $m \in M$,
    \begin{equation*}
      m \rightsemiaction G_0 = m \rightsemiaction e_G G_0
                        = g_{m_0, m} e_G \leftaction m_0
                        = g_{m_0, m} \leftaction m_0
                        = m.
    \end{equation*}
    Let $H$ be a subgroup of $G$ such that $\set{g_{m_0, m} \suchthat m \in M} \subseteq H$. Furthermore, let $m \in M$ and let $h \in H$. Put $h_0 = g_{m_0, g_{m_0, m} h \leftaction m_0}^{-1} g_{m_0, m} h$. Because $g_{m_0, g_{m_0, m} h \leftaction m_0}^{-1} \in H$, $g_{m_0, m} \in H$, and
    \begin{align*}
      h_0 \leftaction m_0
      &= g_{m_0, g_{m_0, m} h \leftaction m_0}^{-1} \leftaction (g_{m_0, m} h \leftaction m_0)\\
      &= m_0,
    \end{align*}
    we have $h_0 \in H_0$. Moreover, for each $g G_0 \in G \quotient G_0$,
    \begin{align*}
      m \rightsemiaction h \cdot g G_0
      &= m \rightsemiaction h g G_0\\
      &= g_{m_0, m} h g \leftaction m_0\\
      &= g_{m_0, g_{m_0, m} h \leftaction m_0} h_0 g \leftaction m_0\\
      &= (g_{m_0, m} h \leftaction m_0) \rightsemiaction h_0 g G_0\\
      &= (m \rightsemiaction h G_0) \rightsemiaction h_0 \cdot g G_0. \tag*{\qed}
    \end{align*}
  \end{proof}

  \begin{example}
  \label{ex:group:liberation}
    In the situation of Example~\ref{ex:group:cell-space}, the stabiliser $G_0$ of the neutral element $m_0$ under $\cdot$ is the trivial subgroup $\set{e_G}$ of $G$ and, for each element $g \in G$, we have $g_{m_0, m} g g_{m_0, m}^{-1} m = m g m^{-1} m = m g$. Under the natural identification of $G \quotient G_0$ with $G$, the induced semi-action $\rightsemiaction$ is the right group action of $G$ on itself by multiplication.
  \end{example}

  \begin{example}
  \label{ex:sphere:liberation}
    In the situation of Example~\ref{ex:sphere:cell-space}, the stabiliser $G_0$ of the north pole $m_0$ under $\leftaction$ is the group of rotations about the $z$-axis. An element $g G_0 \in G \quotient G_0$ semi-acts on a point $m$ on the right by the induced semi-action $\rightsemiaction$ by first rotating $m$ to $m_0$, $g_{m_0, m}^{-1} \leftaction m = m_0$, secondly rotating $m_0$ as prescribed by $g$, $g g_{m_0, m}^{-1} \leftaction m = g \leftaction m_0$, and thirdly undoing the first rotation, $g_{m_0, m} g g_{m_0, m}^{-1} \leftaction m = g_{m_0, m} \leftaction (g \leftaction m_0)$, in other words, by first changing the rotation axis of $g$ such that the new axis stands to the line through the origin and $m$ as the old one stood to the line through the origin and $m_0$, $g_{m_0, m} g g_{m_0, m}^{-1}$, and secondly rotating $m$ as prescribed by this new rotation. 

    Let $N_0$ be a subset of the sphere $M$, which we think of as a geometrical object on the sphere that has its centre at $m_0$, for example, a circle of latitude. The set $N = \set{g G_0 \in G \quotient G_0 \suchthat g \leftaction m_0 \in N_0} = \set{G_{m_0, m} \suchthat m \in N_0} = \set{g_{m_0, m} G_0 \suchthat m \in N_0}$ can be thought of as a realisation of $N_0$ in $G \quotient G_0$. Indeed, $m_0 \rightsemiaction N = g_{m_0, m_0} \set{g_{m_0, m} \suchthat m \in N_0} \leftaction m_0 = N_0$. Furthermore, for each point $m \in M$, the set $m \rightsemiaction N = g_{m_0, m} \leftaction N_0$ has the same shape and size as $N_0$ but its centre at $m$.
  \end{example}


  \begin{lemma}
  \label{lem:liberation:transitive-and-free}
    The semi-action $\rightsemiaction$ is
    \begin{enumerate}
      \item \label{it:liberation:transitive-and-free:transitive}
            \define{transitive}, which means that the set $M$ is non-empty and
            \begin{equation*}
              \ForEach m \in M \ForEach m' \in M \Exists \mathfrak{g} \in G \quotient G_0 \SuchThat m \rightsemiaction \mathfrak{g} = m';
            \end{equation*}
      \item \label{it:liberation:transitive-and-free:free}
            \define{free}, which means that
            \begin{equation*}
              \ForEach \mathfrak{g} \in G \quotient G_0 \ForEach \mathfrak{g}' \in G \quotient G_0 \Holds
                    (\Exists m \in M \SuchThat m \rightsemiaction \mathfrak{g} = m \rightsemiaction \mathfrak{g}') \implies \mathfrak{g} = \mathfrak{g}'.
            \end{equation*}
    \end{enumerate}
  \end{lemma}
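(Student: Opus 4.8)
The plan is to work throughout with the closed form $m \rightsemiaction g G_0 = g_{m_0, m} g \leftaction m_0$ supplied by Lemma~\ref{lem:liberation}, which already guarantees that $\rightsemiaction$ is well-defined; this lets me freely pick coset representatives without worrying about dependence on the choice. Non-emptiness of $M$ is immediate, since transitivity of $\leftaction$ includes $M \neq \emptyset$ by definition.

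For part~\ref{it:liberation:transitive-and-free:transitive}, given $m, m' \in M$ I would exhibit an explicit preimage. Because $g_{m_0, m'} \leftaction m_0 = m'$, the choice $\mathfrak{g} = g_{m_0, m}^{-1} g_{m_0, m'} G_0$ yields, via the closed form and the action axioms, $m \rightsemiaction \mathfrak{g} = g_{m_0, m} (g_{m_0, m}^{-1} g_{m_0, m'}) \leftaction m_0 = g_{m_0, m'} \leftaction m_0 = m'$, which settles transitivity.

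For part~\ref{it:liberation:transitive-and-free:free}, suppose $\mathfrak{g} = g G_0$ and $\mathfrak{g}' = g' G_0$ agree at some $m$, that is $g_{m_0, m} g \leftaction m_0 = g_{m_0, m} g' \leftaction m_0$. Applying $g_{m_0, m}^{-1}$ through the action axioms cancels the common prefix and gives $g \leftaction m_0 = g' \leftaction m_0$; applying $g^{-1}$ then yields $g^{-1} g' \leftaction m_0 = m_0$, so $g^{-1} g' \in G_0$ and hence $g G_0 = g' G_0$, i.e. $\mathfrak{g} = \mathfrak{g}'$.

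I do not expect a genuine obstacle here: once the closed form is in hand, both parts reduce to cancellation in the left group action together with the defining equivalence $g^{-1} g' \in G_0 \iff g G_0 = g' G_0$. The only point deserving slight care is that the two claims are phrased for cosets rather than representatives, but the well-definedness established in Lemma~\ref{lem:liberation} makes the passage to representatives harmless.
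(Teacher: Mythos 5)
Your proposal is correct and follows essentially the same route as the paper: both parts reduce to the closed form $m \rightsemiaction g G_0 = g_{m_0,m} g \leftaction m_0$, with freeness argued by cancellation exactly as in the paper's proof. The only cosmetic difference is that for transitivity you exhibit the explicit witness $g_{m_0,m}^{-1} g_{m_0,m'} G_0$ via the coordinate system, whereas the paper invokes transitivity of $\leftaction$ to produce an abstract $g$ with $g \leftaction m_0 = g_{m_0,m}^{-1} \leftaction m'$; these are the same argument.
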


  \begin{proof}
    \begin{enumerate}
      \item Let $m \in M$ and let $m' \in M$. Put $m'' = g_{m_0, m}^{-1} \leftaction m'$. Because $\leftaction$ is transitive, there is a $g \in G$ such that $g \leftaction m_0 = m''$. Hence,
            \begin{align*}
              m \rightsemiaction g G_0 &= g_{m_0, m} g \leftaction m_0\\
                                  &= g_{m_0, m} \leftaction (g \leftaction m_0)\\
                                  &= g_{m_0, m} \leftaction m''\\
                                  &= g_{m_0, m} \leftaction (g_{m_0, m}^{-1} \leftaction m')\\
                                  &= e_G \leftaction m'\\
                                  &= m'.
            \end{align*}
      \item Let $g G_0$ and $g' G_0$ be two elements of $G \quotient G_0$, and let $m$ be an element of $M$ such that $m \rightsemiaction g G_0 = m \rightsemiaction g' G_0$. Then, $g_{m_0, m} g \leftaction m_0 = g_{m_0, m} g' \leftaction m_0$. Hence, $g \leftaction m_0 = g' \leftaction m_0$. Therefore, $g^{-1} g' \leftaction m_0 = m_0$. Thus, $g^{-1} g' \in G_0$. In conclusion, $g G_0 = g' G_0$. \qed
    \end{enumerate}
  \end{proof}

  \begin{lemma}
  \label{lem:semi-commutivity-of-liberation} 
    The semi-action $\rightsemiaction$
    \begin{enumerate}
      \item \label{it:semi-commutivity-of-liberation:semi-commutes}
            \define{semi-commutes with $\leftaction$}, which means that, for each subgroup $H$ of $G$ such that $\set{g_{m_0, m} \suchthat m \in M} \subseteq H$,
            \begin{equation*}
              \ForEach m \in M \ForEach h \in H \Exists h_0 \in H_0 \SuchThat \ForEach \mathfrak{g} \in G \quotient G_0 \Holds
                    (h \leftaction m) \rightsemiaction \mathfrak{g} = h \leftaction (m \rightsemiaction h_0 \cdot \mathfrak{g});
            \end{equation*}
      \item \label{it:semi-commutivity-of-liberation:exhausts}
            \define{exhausts its defect with respect to its semi-commutativity with $\leftaction$ in $m_0$}, which means that, for each subgroup $H$ of $G$ such that $\set{g_{m_0, m} \suchthat m \in M} \subseteq H$,
            \begin{equation*}
              \ForEach h_0 \in H_0 \ForEach \mathfrak{g} \in G \quotient G_0 \Holds
                    (h_0^{-1} \leftaction m_0) \rightsemiaction \mathfrak{g} = h_0^{-1} \leftaction (m_0 \rightsemiaction h_0 \cdot \mathfrak{g}).
            \end{equation*}
    \end{enumerate}
  \end{lemma}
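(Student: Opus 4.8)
The plan is to derive both parts from a single observation: the chosen coordinate $g_{m_0, h \leftaction m}$ of the transported cell $h \leftaction m$ agrees with the transported coordinate $h g_{m_0, m}$ only up to an element of the stabiliser $G_0$, and that discrepancy is exactly the witness $h_0$ that must be produced. Since $\rightsemiaction$ and $\cdot$ are already known to be well-defined (Lemma~\ref{lem:liberation} and Lemma~\ref{lem:induced-left-group-action-on-quotient-set}), I would throughout evaluate both sides on a coset representative via the closed formula $m' \rightsemiaction g' G_0 = g_{m_0, m'} g' \leftaction m_0$.

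For part~\ref{it:semi-commutivity-of-liberation:semi-commutes}, fix a subgroup $H$ with $\set{g_{m_0, m} \suchthat m \in M} \subseteq H$, a cell $m \in M$, and a symmetry $h \in H$, and take as candidate witness
\begin{equation*}
  h_0 = g_{m_0, m}^{-1} h^{-1} g_{m_0, h \leftaction m},
\end{equation*}
which, crucially, does not depend on $\mathfrak{g}$, as the quantifier order demands. First I would check $h_0 \in H_0 = G_0 \cap H$: it lies in $H$ as a product of $g_{m_0, m}^{-1}$, $h^{-1}$, and the coordinate $g_{m_0, h \leftaction m}$, all of which lie in $H$; and it fixes the origin, since $h_0 \leftaction m_0 = g_{m_0, m}^{-1} h^{-1} \leftaction (h \leftaction m) = g_{m_0, m}^{-1} \leftaction m = m_0$, using $g_{m_0, h \leftaction m} \leftaction m_0 = h \leftaction m$ and $g_{m_0, m} \leftaction m_0 = m$. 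The decisive point is that $h_0$ is built to satisfy $h g_{m_0, m} h_0 = g_{m_0, h \leftaction m}$; granting this, for every $\mathfrak{g} = g G_0$,
\begin{equation*}
  h \leftaction (m \rightsemiaction h_0 \cdot g G_0)
  = h g_{m_0, m} h_0 g \leftaction m_0
  = g_{m_0, h \leftaction m} g \leftaction m_0
  = (h \leftaction m) \rightsemiaction g G_0,
\end{equation*}
which is the asserted identity.

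For part~\ref{it:semi-commutivity-of-liberation:exhausts} I expect the cleanest route to be a direct computation rather than an appeal to part~\ref{it:semi-commutivity-of-liberation:semi-commutes}. Since $h_0 \in H_0 \subseteq G_0$, both $h_0$ and $h_0^{-1}$ fix $m_0$, and $g_{m_0, m_0} = e_G$; hence, for every $\mathfrak{g} = g G_0$,
\begin{equation*}
  (h_0^{-1} \leftaction m_0) \rightsemiaction g G_0
  = m_0 \rightsemiaction g G_0
  = g \leftaction m_0
  = h_0^{-1} \leftaction (h_0 g \leftaction m_0)
  = h_0^{-1} \leftaction (m_0 \rightsemiaction h_0 \cdot g G_0).
\end{equation*}
Equivalently, this is the specialisation of part~\ref{it:semi-commutivity-of-liberation:semi-commutes} to $m = m_0$ and $h = h_0^{-1}$, for which the formula above returns the witness $h_0$ itself --- which is precisely what ``exhausts its defect in $m_0$'' is meant to record.

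The only genuine obstacle is identifying the witness $h_0$; everything else is mechanical unwinding of the two action axioms together with the coordinate condition $g_{m_0, m} \leftaction m_0 = m$. Once $h_0$ is read off as the stabiliser element relating $h g_{m_0, m}$ to $g_{m_0, h \leftaction m}$, and one confirms it actually lands in $H_0$ rather than merely in $G_0$, both identities collapse immediately.
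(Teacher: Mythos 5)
Your proposal is correct and matches the paper's proof essentially verbatim: the same witness $h_0 = g_{m_0, m}^{-1} h^{-1} g_{m_0, h \leftaction m}$, the same verification that it lies in $H_0$, and the same coordinate computation for both parts (the paper likewise proves part~2 by direct computation using $g_{m_0, m_0} = e_G$ rather than by citing part~1). Your aside that part~2 is the specialisation $m = m_0$, $h = h_0^{-1}$ of the part~1 construction is a correct and clarifying observation not made explicit in the paper.
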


  \begin{proof}
    Let $H$ be a subgroup of $G$ such that $\set{g_{m_0, m} \suchthat m \in M} \subseteq H$.
    \begin{enumerate}
      \item Let $h \in H$ and let $m \in M$. Put $h_0 = g_{m_0, m}^{-1} h^{-1} g_{m_0, h \leftaction m}$. Because $g_{m_0, m}^{-1} \in H$, $g_{m_0, h \leftaction m}^{-1} \in H$, and
            \begin{align*}
              g_{m_0, m}^{-1} h^{-1} g_{m_0, h \leftaction m} \leftaction m_0 &= g_{m_0, m}^{-1} h^{-1} \leftaction (h \leftaction m)\\
                                                                              &= g_{m_0, m}^{-1} \leftaction m\\
                                                                              &= m_0,
            \end{align*}
            we have $h_0 \in H_0$. Moreover, for each $g G_0 \in G \quotient G_0$,
            \begin{align*}
              (h \leftaction m) \rightsemiaction g G_0
              &= g_{m_0, h \leftaction m} g \leftaction m_0\\
              &= h g_{m_0, m} h_0 g \leftaction m_0\\
              &= h \leftaction (g_{m_0, m} h_0 g \leftaction m_0)\\
              &= h \leftaction (m \rightsemiaction h_0 \cdot g G_0).
            \end{align*}
      \item For each $h_0 \in H_0$ and each $g G_0 \in G \quotient G_0$, because $g_{m_0, m_0} = e_G$,
            \begin{align*} 
              (h_0^{-1} \leftaction m_0) \rightsemiaction g G_0
              &= m_0 \rightsemiaction g G_0\\
              &= g_{m_0, m_0} g \leftaction m_0\\
              &= h_0^{-1} g_{m_0, m_0} h_0 g \leftaction m_0\\
              &= h_0^{-1} \leftaction (g_{m_0, m_0} h_0 g \leftaction m_0)\\
              &= h_0^{-1} \leftaction (m_0 \rightsemiaction h_0 g G_0)\\
              &= h_0^{-1} \leftaction (m_0 \rightsemiaction h_0 \cdot g G_0). \tag*{\qed}
            \end{align*}
    \end{enumerate}
  \end{proof}

  \begin{lemma}
  \label{lem:identification-of-G-quotient-Gzero-with-M-by-right-semiaction}
    The maps
    \begin{equation*}
      \left\{
      \begin{aligned}
        m_0 \rightsemiaction \blank \from G \quotient G_0 &\to     M,\\
                                        \mathfrak{g} &\mapsto m_0 \rightsemiaction \mathfrak{g},
      \end{aligned}
      \right\}
      \text{ and }
      \left\{
      \begin{aligned}
        \iota \from M &\to     G \quotient G_0,\\
                    m &\mapsto G_{m_0, m},
      \end{aligned}
      \right\}
    \end{equation*}
    are inverse to each other and, under the identification of $G \quotient G_0$ with $M$ by either of these maps,
    \begin{equation*}
      \ForEach m \in M \ForEach \mathfrak{g} \in G \quotient G_0 \simeq M \Holds m \rightsemiaction \mathfrak{g} = g_{m_0, m} \leftaction \mathfrak{g}.
    \end{equation*}
  \end{lemma}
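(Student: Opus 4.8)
The plan is to verify directly that the two maps are mutually inverse by computing both composites, and then to check the displayed formula by unwinding the definition of $\rightsemiaction$ on each side. Since the action $\leftaction$ is transitive, any two elements of $M$ share the orbit $M$, so Lemma~\ref{lem:stabiliser-vs-transporter} applies to arbitrary pairs and will supply the coset identities I need. Note also that Lemma~\ref{lem:iota-is-bijective-and-equivariant} already guarantees that $\iota$ is a bijection, so it would suffice to exhibit $m_0 \rightsemiaction \blank$ as a one-sided inverse; but computing both composites is equally short and self-contained.

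First I would record two consequences of the definitions. From the formula for $\rightsemiaction$ together with $g_{m_0, m_0} = e_G$ we get $m_0 \rightsemiaction g G_0 = g \leftaction m_0$ for every $g \in G$. From Lemma~\ref{lem:stabiliser-vs-transporter}, using that $g_{m_0, m} \in G_{m_0, m}$ and $g \in G_{m_0, g \leftaction m_0}$ respectively and that $G_{m_0} = G_0$, we get $G_{m_0, m} = g_{m_0, m} G_0$ for every $m \in M$ and $G_{m_0, g \leftaction m_0} = g G_0$ for every $g \in G$. With these in hand, the composite $\iota \after (m_0 \rightsemiaction \blank)$ sends $g G_0$ to $\iota(g \leftaction m_0) = G_{m_0, g \leftaction m_0} = g G_0$, hence is the identity on $G \quotient G_0$; and the composite $(m_0 \rightsemiaction \blank) \after \iota$ sends $m$ to $m_0 \rightsemiaction g_{m_0, m} G_0 = g_{m_0, m} \leftaction m_0 = m$, hence is the identity on $M$. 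This establishes that the two maps are inverse to each other.

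For the displayed equation I would fix $m \in M$ and a coset $\mathfrak{g} = g G_0$, and keep explicit the two roles of $\mathfrak{g}$: on the left it is an element of $G \quotient G_0$, while on the right it has been identified with the point $m_0 \rightsemiaction g G_0 = g \leftaction m_0$ of $M$. The left-hand side is then $m \rightsemiaction g G_0 = g_{m_0, m} g \leftaction m_0$ by definition of $\rightsemiaction$, and the right-hand side is $g_{m_0, m} \leftaction (g \leftaction m_0) = g_{m_0, m} g \leftaction m_0$ by the second action axiom; the two coincide. Well-definedness is automatic, since both $\rightsemiaction$ and the identification are already known to be independent of the chosen representative $g$.

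I do not expect a genuine obstacle here; the computations are immediate once the coset identities from Lemma~\ref{lem:stabiliser-vs-transporter} are in place. The only point that warrants care is the notational overloading of $\mathfrak{g}$ as simultaneously a coset and, through the identification, a point of $M$, so I would spell out both readings precisely at the step where the formula $m \rightsemiaction \mathfrak{g} = g_{m_0, m} \leftaction \mathfrak{g}$ is verified rather than leaving the identification implicit.
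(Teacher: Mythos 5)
Your proposal is correct and follows essentially the same route as the paper: the paper verifies $m_0 \rightsemiaction \iota(m) = m$ and invokes the bijectivity of $\iota$ from Lemma~\ref{lem:iota-is-bijective-and-equivariant}, whereas you compute both composites explicitly via the coset identities $G_{m_0,m} = g_{m_0,m}G_0$ and $G_{m_0, g \leftaction m_0} = gG_0$ — a negligible difference. The verification of $m \rightsemiaction \mathfrak{g} = g_{m_0,m} \leftaction \mathfrak{g}$ is the same unwinding of the definition of $\rightsemiaction$ in both cases, and your explicit attention to the two readings of $\mathfrak{g}$ is a welcome clarification rather than a deviation.
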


  \begin{proof}
    According to Lemma~\ref{lem:iota-is-bijective-and-equivariant}, the map $\iota$ is bijective and, for each $m \in M$,
    \begin{align*}
      m_0 \rightsemiaction \iota(m)
      &= m_0 \rightsemiaction G_{m_0, m}\\
      &= m_0 \rightsemiaction g_{m_0, m} G_0\\
      &= g_{m_0, m_0} g_{m_0, m} \leftaction m_0\\
      &= e_G \leftaction m\\
      &= m.
    \end{align*}
    Therefore, $m_0 \rightsemiaction \blank = \iota^{-1}$. Moreover, for each $g G_0 \in G \quotient G_0$,
    \begin{align*}
      m \rightsemiaction g G_0
      &= g_{m_0, m} g \leftaction m_0\\
      &= g_{m_0, m} \leftaction (g \leftaction m_0)\\
      &= g_{m_0, m} \leftaction \iota^{-1}(G_{m_0, g \leftaction m_0})\\
      &= g_{m_0, m} \leftaction \iota^{-1}(g G_0). \tag*{\qed}
    \end{align*}
  \end{proof}

  \section{Semi-Cellular and Cellular Automata}
  \label{sec:automata}

  In this section, let $\mathcal{R} = \ntuple{\mathcal{M}, \mathcal{K}} = \ntuple{\ntuple{M, G, \leftaction}, \ntuple{m_0, \family{g_{m_0, m}}_{m \in M}}}$ be a cell space.

  \begin{definition}
  \label{def:semi-cellular-automaton}
    Let $Q$ be a set, let $N$ be a subset of $G \quotient G_0$ such that $G_0 \cdot N \subseteq N$, and let $\delta$ be a map from $Q^N$ to $Q$. The quadruple $\mathcal{C} = \ntuple{\mathcal{R}, Q, N, \delta}$ is called \define{semi-cellular automaton}, each element $q \in Q$ is called \define{state}, the set $N$ is called \define{neighbourhood}, each element $n \in N$ is called \define{neighbour}, and the map $\delta$ is called \define{local transition function}.
  \end{definition}

  \begin{example}
  \label{ex:group:semi-cellular-automaton}
    In the situation of Example~\ref{ex:group:liberation}, the semi-cellular automata over $\mathcal{R}$ are the usual cellular automata over the group $G$. 
  \end{example}

  \begin{example}
  \label{ex:sphere:semi-cellular-automaton}
    In the situation of Example~\ref{ex:sphere:liberation}, let $Q$ be the set $\set{0,1}$, let $N_0$ be the union of all circles of latitude between $45\degree$ and $90\degree$ north, which is a curved circular disk of radius $\pi/4$ with the north pole $m_0$ at its centre, let $N$ be the set $\set{g G_0 \suchthat g \leftaction m_0 \in N_0}$, and let
    \begin{equation*} 
      \delta \from Q^N \to     Q,
                  \ell \mapsto \begin{dcases*}
                                  0, &if $\forall n \in N \Holds \ell(n) = 0$,\\
                                  1, &if $\exists n \in N \SuchThat \ell(n) = 1$.
                                \end{dcases*}
    \end{equation*}
    The quadruple $\mathcal{C} = \ntuple{\mathcal{R}, Q, N, \delta}$ is a semi-cellular automaton.
  \end{example}

  In the remainder of this section, let $\mathcal{C} = \ntuple{\mathcal{R}, Q, N, \delta}$ be a semi-cellular automaton.

%



  \begin{definition} 
    Let $E$ be a subset of $N$. It is called \define{essential neighbourhood} if and only if
    \begin{equation*}
      \ForEach \ell \in Q^N \ForEach \ell' \in Q^N \Holds \ell\restriction_E = \ell'\restriction_E \implies \delta(\ell) = \delta(\ell').
    \end{equation*}
  \end{definition}

  \begin{definition}
  \label{def:local-configuration}
    Each map $\ell \in Q^N$ is called \define{local configuration}. The stabiliser $G_0$ acts on $Q^N$ on the left by
    \begin{align*}
      \bullet \from G_0 \times Q^N &\to     Q^N,\\
                       (g_0, \ell) &\mapsto [n \mapsto \ell(g_0^{-1} \cdot n)].
    \end{align*}
  \end{definition}

  \begin{definition}
  \label{def:cellular-automaton}
    The semi-cellular automaton $\mathcal{C}$ is called \define{cellular automaton} if and only if its local transition function $\delta$ is $\bullet$-invariant.
  \end{definition}

  \begin{example}
  \label{ex:group:cellular-automaton}
    In the situation of Example~\ref{ex:group:semi-cellular-automaton}, the stabiliser $G_0$ of the neutral element $m_0$ is the trivial subgroup $\set{e_G}$ of $G$. Therefore, for each semi-cellular automaton over $\mathcal{R}$, its local transition function is $\bullet$-invariant and hence it is a cellular automaton.
  \end{example}

  \begin{example}
  \label{ex:sphere:cellular-automaton}
    In the situation of Example~\ref{ex:sphere:semi-cellular-automaton}, think of $0$ as black, $1$ as white, and of local configurations as black-and-white patterns on $N_0 = m_0 \rightsemiaction N$. The rotations $G_0$ about the $z$-axis act on these patterns by $\bullet$ by rotating them. The local transition function $\delta$ maps the black pattern to $0$ and all others to $1$, which is invariant under rotations. Therefore, the quadruple $\mathcal{C}$ is a cellular automaton.
  \end{example}

  \begin{definition}
  \label{def:global-configuration}
    Each map $c \in Q^M$ is called \define{global configuration}. The group $G$ acts on $Q^M$ on the left by
    \begin{align*}
      \inducedleftaction \from G \times Q^M &\to     Q^M,\\
                                      (g,c) &\mapsto [m \mapsto c(g^{-1} \leftaction m)].
    \end{align*}
  \end{definition}

  \begin{definition}
  \label{def:observed-local-configuration}
    For each global configuration $c \in Q^M$ and each cell $m \in M$, the local configuration
    \begin{align*}
      N &\to     Q,\\
      n &\mapsto c(m \rightsemiaction n),
    \end{align*}
    is called \define{observed by $m$ in $c$}.
  \end{definition}

  \begin{remark}
  \label{rem:local-configuration-induces-global-configuration}
    Because the semi-action $\rightsemiaction$ is free, for each local configuration $\ell \in Q^N$ and each cell $m \in M$, there is a global configuration $c \in Q^M$ such that the local configuration observed by $m$ in $c$ is $\ell$. 
  \end{remark}

  \begin{definition}
  \label{def:global-transition-function}
    The map
    \begin{align*}
      \Delta \from Q^M &\to     Q^M,\\
                     c &\mapsto [m \mapsto \delta(n \mapsto c(m \rightsemiaction n))],
    \end{align*}
    is called \define{global transition function}.
  \end{definition}

  \begin{example}
    In the situation of Example~\ref{ex:sphere:cellular-automaton}, repeated applications of the global transition function of $\mathcal{C}$ grows white regions on $M$.
  \end{example}

  \begin{remark}
    For each subset $A$ of $M$ and each global configuration $c \in Q^M$, the states of the cells $A$ in $\Delta(c)$ depends at most on the states of the cells $A \rightsemiaction N$ in $c$. More precisely,
    \begin{equation*}
      \ForEach A \subseteq M \ForEach c \in Q^M \ForEach c' \in Q^M \Holds
          c\restriction_{A \rightsemiaction N} = c'\restriction_{A \rightsemiaction N} \implies \Delta(c)\restriction_A = \Delta(c')\restriction_A.
    \end{equation*}
  \end{remark}

  \begin{lemma}
  \label{lem:semi-commutativity-induced-left-action-and-bullet}
    Let $m$ be an element of $M$, let $g$ be an element of $G$, and let $g_0$ be an element of $G_0$ such that
    \begin{equation*}
      \ForEach n \in N \Holds (g^{-1} \leftaction m) \rightsemiaction n = g^{-1} \leftaction (m \rightsemiaction g_0 \cdot n).
    \end{equation*}
    For each global configuration $c \in Q^M$,
    \begin{equation*}
      [n \mapsto c((g^{-1} \leftaction m) \rightsemiaction n)] = g_0^{-1} \bullet [n \mapsto (g \inducedleftaction c)(m \rightsemiaction n)].
    \end{equation*}
  \end{lemma}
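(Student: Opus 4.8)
The plan is to verify the claimed identity pointwise. Both sides are elements of $Q^N$, so it suffices to show that they agree on each neighbour $n \in N$; I would fix such an $n$ and chase the definitions on the right-hand side until it matches the left-hand side.

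First I would peel off the $\bullet$-action. By Definition~\ref{def:local-configuration}, for any $\ell \in Q^N$ one has $(g_0^{-1} \bullet \ell)(n) = \ell((g_0^{-1})^{-1} \cdot n) = \ell(g_0 \cdot n)$; applied to $\ell = [n' \mapsto (g \inducedleftaction c)(m \rightsemiaction n')]$, the right-hand side evaluated at $n$ becomes $(g \inducedleftaction c)(m \rightsemiaction g_0 \cdot n)$. Next I would unwind the induced left group action: by Definition~\ref{def:global-configuration}, $(g \inducedleftaction c)(m') = c(g^{-1} \leftaction m')$ for every $m' \in M$, and taking $m' = m \rightsemiaction g_0 \cdot n$ rewrites the expression as $c(g^{-1} \leftaction (m \rightsemiaction g_0 \cdot n))$. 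Finally, the hypothesis is exactly the assertion that $g^{-1} \leftaction (m \rightsemiaction g_0 \cdot n) = (g^{-1} \leftaction m) \rightsemiaction n$, so this equals $c((g^{-1} \leftaction m) \rightsemiaction n)$, which is the value at $n$ of the left-hand side $[n \mapsto c((g^{-1} \leftaction m) \rightsemiaction n)]$. This closes the chain of equalities and establishes the identity.

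I do not expect a genuine obstacle here: the statement is a bookkeeping identity relating the three actions $\leftaction$, $\inducedleftaction$, and $\bullet$, and the proof is a three-step substitution. The only point requiring care is the inverse appearing in the $\bullet$-action, so that $g_0^{-1}$ acting on a local configuration evaluates that configuration at $g_0 \cdot n$ rather than at $g_0^{-1} \cdot n$. Getting this direction right is precisely what makes the hypothesis applicable, since the hypothesis features $m \rightsemiaction g_0 \cdot n$ with $g_0$ and not $g_0^{-1}$; a sign error here would leave the two sides mismatched.
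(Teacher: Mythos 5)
Your proof is correct and is essentially the paper's own argument: the paper writes the same three-step chain of equalities (hypothesis, definition of $\bullet$, definition of $\inducedleftaction$) starting from the left-hand side, whereas you evaluate pointwise and traverse it from the right-hand side. Your care with the inverse in the $\bullet$-action, so that $g_0^{-1}\bullet\ell$ evaluates $\ell$ at $g_0\cdot n$, matches exactly the step the paper uses.
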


  \begin{proof}
    For each global configuration $c \in Q^M$,
    \begin{align*}
      [n \mapsto c((g^{-1} \leftaction m) \rightsemiaction n)]
      &= [n \mapsto c(g^{-1} \leftaction (m \rightsemiaction g_0 \cdot n))]\\
      &= g_0^{-1} \bullet [n \mapsto c(g^{-1} \leftaction (m \rightsemiaction n))]\\
      &= g_0^{-1} \bullet [n \mapsto (g \inducedleftaction c)(m \rightsemiaction n)]. \tag*{\qed}
    \end{align*}
  \end{proof}

  \begin{definition}
    The set $N_0 = m_0 \rightsemiaction N$ is called \define{neighbourhood of $m_0$}.
  \end{definition}

  \begin{definition}
    The map
    \begin{align*}
      \delta_0 \from Q^{N_0} &\to     Q,\\
                      \ell_0 &\mapsto \delta(n \mapsto \ell_0(m_0 \rightsemiaction n)),
    \end{align*}
    is called \define{local transition function of $m_0$}.
  \end{definition}

  \begin{lemma}
  \label{lem:global-transition-function-without-liberation}
    The global transition function $\Delta$ of $\mathcal{C}$ is identical to the map
    \begin{align*}
      \Delta_0 \from Q^M &\to     Q^M,\\
                       c &\mapsto [m \mapsto \delta_0((g_{m_0, m}^{-1} \inducedleftaction c)\restriction_{N_0})].
    \end{align*}
  \end{lemma}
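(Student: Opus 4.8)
The plan is to prove the two maps are identical by showing they agree pointwise, that is, that $\Delta(c)(m) = \Delta_0(c)(m)$ for each global configuration $c \in Q^M$ and each cell $m \in M$. Since both values are obtained by applying $\delta$ to some local configuration in $Q^N$, it suffices to check that these two local configurations coincide, and for that it is enough to compare them neighbour by neighbour.

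The key identity I would establish first is that, for each cell $m \in M$ and each neighbour $n \in N$,
\begin{equation*}
  m \rightsemiaction n = g_{m_0, m} \leftaction (m_0 \rightsemiaction n).
\end{equation*}
This is immediate from the definition of the right quotient set semi-action: writing $n = g G_0$, one has $m_0 \rightsemiaction n = g_{m_0, m_0} g \leftaction m_0 = g \leftaction m_0$ because $g_{m_0, m_0} = e_G$, and hence $g_{m_0, m} \leftaction (m_0 \rightsemiaction n) = g_{m_0, m} g \leftaction m_0 = m \rightsemiaction n$. (Equivalently, this is Lemma~\ref{lem:identification-of-G-quotient-Gzero-with-M-by-right-semiaction} read under the identification of $\mathfrak{g}$ with $m_0 \rightsemiaction \mathfrak{g}$.)

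Next I would unfold $\Delta_0(c)(m)$ through the chain of definitions. By definition $\Delta_0(c)(m) = \delta_0((g_{m_0, m}^{-1} \inducedleftaction c)\restriction_{N_0})$, and since $\delta_0(\ell_0) = \delta(n \mapsto \ell_0(m_0 \rightsemiaction n))$ with $m_0 \rightsemiaction n \in N_0$ for each $n \in N$, the restriction to $N_0$ can be evaluated at $m_0 \rightsemiaction n$ without loss, giving $\Delta_0(c)(m) = \delta(n \mapsto (g_{m_0, m}^{-1} \inducedleftaction c)(m_0 \rightsemiaction n))$. Applying the definition of the induced action, $(g \inducedleftaction c)(m') = c(g^{-1} \leftaction m')$, rewrites this as $\delta(n \mapsto c(g_{m_0, m} \leftaction (m_0 \rightsemiaction n)))$, and finally the key identity turns the argument into $c(m \rightsemiaction n)$, so that $\Delta_0(c)(m) = \delta(n \mapsto c(m \rightsemiaction n)) = \Delta(c)(m)$.

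The argument is entirely a routine unfolding once the semi-action identity is in hand, so I do not expect a genuine obstacle. The only points that demand care are the bookkeeping of the two inverses — the explicit $g_{m_0, m}^{-1}$ in the argument of $\Delta_0$ and the inverse hidden in $\inducedleftaction$ — which must cancel to yield $g_{m_0, m}$ acting on $m_0 \rightsemiaction n$, and the observation that $m_0 \rightsemiaction n$ lands in $N_0 = m_0 \rightsemiaction N$, which is exactly what legitimises restricting $g_{m_0, m}^{-1} \inducedleftaction c$ to $N_0$ before feeding it to $\delta_0$.
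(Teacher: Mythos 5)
Your proposal is correct and follows essentially the same route as the paper's own proof: both hinge on the identity $m \rightsemiaction n = g_{m_0, m} \leftaction (m_0 \rightsemiaction n)$ (a consequence of $g_{m_0, m_0} = e_G$) and then unfold the definitions of $\delta_0$ and $\inducedleftaction$ to match $\Delta_0(c)(m)$ with $\delta(n \mapsto c(m \rightsemiaction n)) = \Delta(c)(m)$. The only difference is the direction in which you traverse the chain of equalities, which is immaterial.
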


  \begin{proof}
    Let $c \in Q^M$ and let $m \in M$. For each $n = g G_0 \in N$,
    \begin{align*} 
      m \rightsemiaction n
      &= g_{m_0, m} g \leftaction m_0\\
      &= g_{m_0, m} g_{m_0, m_0} g \leftaction m_0\\
      &= g_{m_0, m} \leftaction (g_{m_0, m_0} g \leftaction m_0)\\
      &= g_{m_0, m} \leftaction (m_0 \rightsemiaction n)
    \end{align*}
    and thus
    \begin{equation*}
      c(m \rightsemiaction n) = c(g_{m_0, m} \leftaction (m_0 \rightsemiaction n)) = (g_{m_0, m}^{-1} \inducedleftaction c)(m_0 \rightsemiaction n).
    \end{equation*}
    Therefore,
    \begin{align*}
      \Delta(c)(m)
      &= \delta(n \mapsto c(m \rightsemiaction n))\\
      &= \delta(n \mapsto (g_{m_0, m}^{-1} \inducedleftaction c)(m_0 \rightsemiaction n))\\
      &= \delta_0(n_0 \mapsto (g_{m_0, m}^{-1} \inducedleftaction c)(n_0))\\
      &= \delta_0((g_{m_0, m}^{-1} \inducedleftaction c)\restriction_{N_0})\\
      &= \Delta_0(c)(m).
    \end{align*}
    In conclusion, $\Delta = \Delta_0$. \qed
  \end{proof}

  \section{Invariance, Equivariance, Determination, and Composition of Global Transition Functions}
  \label{sec:invariances}

  In Theorem~\ref{thm:independence-of-coordinate-system} we show that a global transition function does not depend on the choice of coordinate system. In Theorem~\ref{thm:local-invariance-versus-global-equivariance} we show that a global transition function is $\inducedleftaction$-equivariant if and only if the local transition function is $\bullet$-invariant. In Theorem~\ref{thm:determination-of-cellular-automata-by-behaviour-at-origin} we show that a global transition function is determined by its behaviour in the origin. And in Theorem~\ref{thm:composition-of-cellular-automata} we show that the composition of two global transition functions is a global transition function.

  \begin{lemma}
  \label{lem:left-group-action-on-union-of-quotients-by-stabilisers}
    Let $\leftaction$ be a left group action of $G$ on $M$. The group $G$ acts on $\bigcup_{m \in M} G \quotient G_m$ on the left by
    \begin{align*}
      \circ \from G \times \bigcup_{m \in M} G \quotient G_m &\to     \bigcup_{m \in M} G \quotient G_m,\\
                                                 (g, g' G_m) &\mapsto g g' G_m g^{-1}\ (= g g' g^{-1} G_{g \leftaction m}),
    \end{align*}
    such that, for each element $g \in G$ and each element $m \in M$, the map
    \begin{align*}
      (g \circ \blank)\restriction_{G \quotient G_m \to G \quotient G_{g \leftaction m}}
      \from G \quotient G_m &\to     G \quotient G_{g \leftaction m},\\
                     g' G_m &\mapsto g \circ g' G_m,
    \end{align*}
    is bijective.
  \end{lemma}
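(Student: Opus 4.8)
The plan is to recognise $\circ$ as, in essence, the conjugation action of $G$ on its own subsets, restricted to the cosets collected in $\bigcup_{m \in M} G \quotient G_m$. First I would establish the parenthetical identity $g g' G_m g^{-1} = g g' g^{-1} G_{g \leftaction m}$. Applying Lemma~\ref{lem:stabiliser-vs-transporter} with $m' = g \leftaction m$, for which $g$ is a transporter of $m$ to $m'$, yields $G_{g \leftaction m} = g G_m g^{-1}$, and hence $g g' G_m g^{-1} = (g g' g^{-1})(g G_m g^{-1}) = g g' g^{-1} G_{g \leftaction m}$. This simultaneously shows that the image $g g' G_m g^{-1}$ is a genuine left coset of $G_{g \leftaction m}$ and therefore lies in $G \quotient G_{g \leftaction m} \subseteq \bigcup_{m \in M} G \quotient G_m$, so that the codomain is the asserted one.

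The main obstacle is well-definedness, and the ambiguity here is twofold: the coset $g' G_m$ does not determine its representative $g'$, and, since $\bigcup_{m \in M} G \quotient G_m$ is a plain union, a single coset (viewed as a subset of $G$) may arise as $g' G_m$ and as $g'' G_{m'}$ for distinct base points whenever $G_m = G_{m'}$. I would dispose of both at once by observing that $g g' G_m g^{-1} = g (g' G_m) g^{-1}$ is precisely the conjugate by $g$ of the subset $g' G_m \subseteq G$, and thus depends only on that subset and on $g$. Consequently different representatives $g'$ produce the same subset and hence the same conjugate, and two labellings of the same subset feed the identical set into the recipe, giving identical output; landing in the union is guaranteed by the first paragraph.

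With well-definedness in hand, the group-action axioms are routine in the conjugation picture: $e_G \circ (g' G_m) = e_G (g' G_m) e_G^{-1} = g' G_m$, and $(g_1 g_2) \circ (g' G_m) = g_1 g_2 (g' G_m) g_2^{-1} g_1^{-1} = g_1 \circ (g_2 \circ g' G_m)$. Finally, for the bijectivity of the restriction, I would argue purely formally: since $\circ$ is a left group action, $g \circ \blank$ is a bijection of the whole union with inverse $g^{-1} \circ \blank$. The restriction of $g \circ \blank$ maps $G \quotient G_m$ into $G \quotient G_{g \leftaction m}$ by the first paragraph, while the restriction of $g^{-1} \circ \blank$ maps $G \quotient G_{g \leftaction m}$ into $G \quotient G_{g^{-1} \leftaction (g \leftaction m)} = G \quotient G_m$, using $g^{-1} \leftaction (g \leftaction m) = e_G \leftaction m = m$. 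These two restrictions are mutually inverse, so each is bijective, which is exactly the claim.
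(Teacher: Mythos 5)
Your proposal is correct and follows essentially the same route as the paper's own proof: the parenthetical identity and membership of the image in $G \quotient G_{g \leftaction m}$ via Lemma~\ref{lem:stabiliser-vs-transporter}, the routine verification of the action axioms, and bijectivity of the restriction by exhibiting $(g^{-1} \circ \blank)$ as its inverse. Your explicit observation that $g g' G_m g^{-1}$ is the conjugate of the subset $g' G_m \subseteq G$, and hence independent of the chosen representative and of the labelling of the coset within the union, is a slightly more careful treatment of well-definedness than the paper gives, but it is not a different argument.
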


  \begin{proof}
    To see that the maps $\circ$ and $(g \circ \blank)\restriction_{G \quotient G_m \to G \quotient G_{g \leftaction m}}$ are well-defined, let $g \in G$, let $m \in M$, let $g' G_m \in G \quotient G_m$, and put $m' = g \leftaction m$. According to Lemma~\ref{lem:stabiliser-vs-transporter}, we have $G_{m'} = g G_m g^{-1}$. Therefore
    \begin{equation*}
      g g' G_m g^{-1} = g g' (g^{-1} g) G_m g^{-1}
                      = (g g' g^{-1}) (g G_m g^{-1})
                      = g g' g^{-1} G_{m'}
                      \in G \quotient G_{m'}.
    \end{equation*}
    To show that the map $\circ$ is a left group action, let $g G_m \in \bigcup_{m \in M} G \quotient G_m$. We have $e_G \circ g G_m = g G_m$. Moreover, for each $g' \in G$ and each $g'' \in G$,
    \begin{equation*}
      g' g'' \circ g G_m = g' g'' g G_m (g'')^{-1} (g')^{-1}
                         = g' \circ g'' g G_m (g'')^{-1}
                         = g' \circ (g'' \circ g G_m).
    \end{equation*}
    The map $(g \circ \blank)\restriction_{G \quotient G_m \to G \quotient G_{g \leftaction m}}$ is bijective, because its inverse is $(g^{-1} \circ \blank)\restriction_{G \quotient G_{g \leftaction m} \to G \quotient G_m}$. \qed
  \end{proof}

  \begin{lemma}
  \label{lem:liberation-and-coordinate-systems}
    Let $\mathcal{M} = \ntuple{M, G, \leftaction}$ be a left homogeneous space, let $\mathcal{K} = \ntuple{m_0, \family{g_{m_0, m}}_{m \in M}}$ and $\mathcal{K}' = \ntuple{m_0', \family{g_{m_0', m}'}_{m \in M}}$ be two coordinate systems for $\mathcal{M}$, let $H$ be a subgroup of $G$ such that $\set{g_{m_0, m} \suchthat m \in M} \cup \set{g_{m_0', m}' \suchthat m \in M} \subseteq H$, and let $h$ be an element of $H$ such that $h \leftaction m_0 = m_0'$. Then,
    \begin{equation*}
      \ForEach m \in M \Exists h_0 \in H_0 \SuchThat \ForEach \mathfrak{g}' \in G \quotient G_0' \Holds
          m \rightsemiaction' \mathfrak{g}' = m \rightsemiaction h_0 \cdot (h^{-1} \circ \mathfrak{g}').
    \end{equation*}
  \end{lemma}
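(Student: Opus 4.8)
The plan is to exhibit an explicit correcting element $h_0 \in H_0$ and then verify the asserted identity by direct substitution. The guiding observation is that, once the right-hand side has been unwound, everything reduces to comparing the two symmetries $g_{m_0', m}'$ and $g_{m_0, m} h_0 h^{-1}$; I would therefore simply choose $h_0$ so that these two coincide as elements of $G$. Solving the equation $g_{m_0, m} h_0 h^{-1} = g_{m_0', m}'$ for $h_0$ singles out the candidate
\begin{equation*}
  h_0 = g_{m_0, m}^{-1} g_{m_0', m}' h .
\end{equation*}

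First I would confirm that this $h_0$ lies in $H_0 = G_{m_0} \cap H$. Membership in $H$ is immediate, since $g_{m_0, m}$, $g_{m_0', m}'$, and $h$ all belong to $H$ by hypothesis and $H$ is a subgroup. Membership in the stabiliser of $m_0$ follows from
\begin{equation*}
  h_0 \leftaction m_0 = g_{m_0, m}^{-1} g_{m_0', m}' h \leftaction m_0 = g_{m_0, m}^{-1} g_{m_0', m}' \leftaction m_0' = g_{m_0, m}^{-1} \leftaction m = m_0,
\end{equation*}
which uses only the defining relations $h \leftaction m_0 = m_0'$, $g_{m_0', m}' \leftaction m_0' = m$, and $g_{m_0, m} \leftaction m_0 = m$ of the data at hand.

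Next I would unwind the right-hand side on a representative $\mathfrak{g}' = g' G_0'$, where $G_0' = G_{m_0'}$. Since $h^{-1} \leftaction m_0' = m_0$, we have $G_{h^{-1} \leftaction m_0'} = G_0$, so the formula for $\circ$ from Lemma~\ref{lem:left-group-action-on-union-of-quotients-by-stabilisers} gives $h^{-1} \circ \mathfrak{g}' = (h^{-1} g' h) G_0$. Acting by $h_0$ via the $\cdot$-action of Lemma~\ref{lem:induced-left-group-action-on-quotient-set} yields $(h_0 h^{-1} g' h) G_0$, and then Lemma~\ref{lem:liberation} gives
\begin{equation*}
  m \rightsemiaction \bigl(h_0 \cdot (h^{-1} \circ \mathfrak{g}')\bigr) = g_{m_0, m} h_0 h^{-1} g' h \leftaction m_0 .
\end{equation*}
By the choice of $h_0$ we have $g_{m_0, m} h_0 h^{-1} = g_{m_0', m}'$, whence the right-hand side equals $g_{m_0', m}' g' h \leftaction m_0 = g_{m_0', m}' g' \leftaction m_0'$, which is precisely $m \rightsemiaction' \mathfrak{g}'$.

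The only genuine difficulty is spotting the correcting element $h_0$; after that, both the membership check and the final comparison are one-line substitutions. I would also note in passing that $\circ$, $\cdot$, and the two semi-actions are all well-defined on their respective quotients, so working with the single representative $g'$ is legitimate and the displayed equalities do not depend on that choice.
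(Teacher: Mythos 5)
Your proposal is correct and follows essentially the same route as the paper: you pick the identical correcting element $h_0 = g_{m_0, m}^{-1} g_{m_0', m}' h$, verify $h_0 \in H_0$ (in fact more explicitly than the paper, which merely asserts it), and then match the two sides via the relation $g_{m_0', m}' = g_{m_0, m} h_0 h^{-1}$ — the only cosmetic difference being that you unwind the right-hand side down to the left-hand side, whereas the paper transforms the left-hand side up to the right-hand side.
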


  \begin{proof}
    Let $m \in M$. Put $h_0 = g_{m_0, m}^{-1} g_{m_0', m}' h$. Then, $h_0 \in H_0$ and $g_{m_0', m}' = g_{m_0, m} h_0 h^{-1}$. Furthermore, let $g G_0' \in G \quotient G_0'$. Then,
    \begin{align*}
      m \rightsemiaction' g G_0' &= g_{m_0', m}' g (g_{m_0', m}')^{-1} \leftaction m\\
                            &= g_{m_0, m} h_0 h^{-1} g h h_0^{-1} g_{m_0, m}^{-1} \leftaction m\\
                            &= m \rightsemiaction h_0 h^{-1} g h h_0^{-1} G_0.
    \end{align*}
    Thus, because $h_0^{-1} G_0 = G_0$ and $h G_0 h^{-1} = h \circ G_0 = G_0'$,
    \begin{align*}
      m \rightsemiaction' g G_0' &= m \rightsemiaction h_0 \cdot h^{-1} g h G_0\\
                            &= m \rightsemiaction h_0 \cdot h^{-1} g h G_0 h^{-1} h\\
                            &= m \rightsemiaction h_0 \cdot h^{-1} g G_0' h\\
                            &= m \rightsemiaction h_0 \cdot (h^{-1} \circ g G_0'). \tag*{\qed}
    \end{align*}
  \end{proof}

  \begin{theorem}
  \label{thm:independence-of-coordinate-system}
    In the situation of Lemma~\ref{lem:liberation-and-coordinate-systems},
    let $\mathcal{C} = \ntuple{\ntuple{\mathcal{M}, \mathcal{K}}, Q, N, \delta}$ be a semi-cellular automaton such that $\delta$ is $\bullet_{H_0}$-invariant, let $N'$ be the set $h \circ N$, and let
    \begin{align*}
      \delta' \from Q^{N'} &\to     Q,\\
                     \ell' &\mapsto \delta(n \mapsto \ell'(h \circ n)).
    \end{align*}
    The quadruple $\ntuple{\ntuple{\mathcal{M}, \mathcal{K}'}, Q, N', \delta'}$ is a semi-cellular automaton whose global transition function is identical to the one of $\mathcal{C}$. 
  \end{theorem}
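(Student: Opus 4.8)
The plan is to verify two things in turn: first that $\ntuple{\ntuple{\mathcal{M}, \mathcal{K}'}, Q, N', \delta'}$ is a legitimate semi-cellular automaton in the sense of Definition~\ref{def:semi-cellular-automaton}, and then that its global transition function $\Delta'$ agrees with the global transition function $\Delta$ of $\mathcal{C}$ cell by cell.

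For the first part I would begin by checking that $N' = h \circ N$ lies in $G \quotient G_0'$ and satisfies $G_0' \cdot N' \subseteq N'$. Since $h \leftaction m_0 = m_0'$ we have $G_{m_0'} = G_0'$, and Lemma~\ref{lem:left-group-action-on-union-of-quotients-by-stabilisers} tells us that $h \circ \blank$ restricts to a bijection $G \quotient G_0 \to G \quotient G_0'$; hence $N' \subseteq G \quotient G_0'$. For the closure condition, fix $g_0' \in G_0'$ and $n = g G_0 \in N$, so that $n' = h \circ n = h g h^{-1} G_0'$. By Lemma~\ref{lem:stabiliser-vs-transporter} applied with $g = h$ we get $G_0' = h G_0 h^{-1}$, so $g_0 = h^{-1} g_0' h \in G_0$, whence $g_0' \cdot n' = h g_0 g h^{-1} G_0' = h \circ (g_0 \cdot n)$. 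Since $g_0 \cdot n \in G_0 \cdot N \subseteq N$, this lies in $h \circ N = N'$. That $\delta'$ is a well-defined map $Q^{N'} \to Q$ is immediate from its definition once one notes that $n \mapsto h \circ n$ carries $N$ into $N'$.

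For the equality $\Delta' = \Delta$ I would argue pointwise. Fix $c \in Q^M$ and $m \in M$. Unfolding the definitions of $\Delta'$ and of $\delta'$ gives $\Delta'(c)(m) = \delta(n \mapsto c(m \rightsemiaction' (h \circ n)))$. Now I apply Lemma~\ref{lem:liberation-and-coordinate-systems}, which supplies an $h_0 \in H_0$ (depending on $m$) with $m \rightsemiaction' \mathfrak{g}' = m \rightsemiaction h_0 \cdot (h^{-1} \circ \mathfrak{g}')$ for all $\mathfrak{g}'$; taking $\mathfrak{g}' = h \circ n$ and using that $\circ$ is a group action collapses $h^{-1} \circ (h \circ n)$ to $n$, so $m \rightsemiaction' (h \circ n) = m \rightsemiaction h_0 \cdot n$, and therefore $\Delta'(c)(m) = \delta(n \mapsto c(m \rightsemiaction h_0 \cdot n))$.

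The final step, which is where the hypothesis that $\delta$ is $\bullet_{H_0}$-invariant enters, is to recognise the argument of $\delta$ as a $\bullet$-rotation of the observed local configuration. Writing $\ell = [n \mapsto c(m \rightsemiaction n)] \in Q^N$, a direct comparison with Definition~\ref{def:local-configuration} shows $[n \mapsto c(m \rightsemiaction h_0 \cdot n)] = h_0^{-1} \bullet \ell$, where $h_0^{-1} \in H_0 \subseteq G_0$ and the closure $G_0 \cdot N \subseteq N$ makes the expression well-defined. Invariance then yields $\delta(h_0^{-1} \bullet \ell) = \delta(\ell) = \Delta(c)(m)$, completing the proof. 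I expect the main obstacle to be purely notational bookkeeping: keeping the two quotient-set operations $\cdot$ and $\circ$ distinct, tracking that the $h_0$ produced by Lemma~\ref{lem:liberation-and-coordinate-systems} depends on $m$ (which is harmless, since the $\bullet_{H_0}$-invariance of $\delta$ absorbs any such rotation), and confirming the membership conditions $g_0 \in G_0$ and $h_0^{-1} \in H_0$ at each use.
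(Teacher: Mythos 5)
Your proposal is correct and follows essentially the same route as the paper's own proof: apply Lemma~\ref{lem:liberation-and-coordinate-systems} to rewrite $m \rightsemiaction' (h \circ n)$ as $m \rightsemiaction h_0 \cdot n$, recognise the result as $h_0^{-1} \bullet [n \mapsto c(m \rightsemiaction n)]$, and absorb the rotation with the $\bullet_{H_0}$-invariance of $\delta$. The only difference is that you spell out the verification of $N' \subseteq G \quotient G_0'$ and $G_0' \cdot N' \subseteq N'$ via $G_0' = h G_0 h^{-1}$, which the paper merely asserts.
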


  \begin{proof}
    We have $N' \subseteq G \quotient G_0'$ and $G_0' \cdot N' \subseteq N'$. Moreover, let $c \in Q^M$ and let $m \in M$. According to Lemma~\ref{lem:liberation-and-coordinate-systems}, there is an $h_0 \in H_0$ such that
    \begin{equation*}
      \ForEach n' \in N' \Holds m \rightsemiaction' n' = m \rightsemiaction h_0 \cdot (h^{-1} \circ n').
    \end{equation*}
    Therefore, because $\delta$ is $\bullet_{H_0}$-invariant,
    \begin{align*}
      \Delta'(c)(m) &= \delta'(n' \mapsto c(m \rightsemiaction' n'))\\
                    &= \delta'(n' \mapsto c(m \rightsemiaction h_0 \cdot (h^{-1} \circ n')))\\
                    &= \delta(n \mapsto c(m \rightsemiaction h_0 \cdot (h^{-1} \circ (h \circ n))))\\
                    &= \delta(n \mapsto c(m \rightsemiaction h_0 \cdot n))\\
                    &= \delta(h_0^{-1} \bullet [n \mapsto c(m \rightsemiaction n)])\\ 
                    &= \delta(n \mapsto c(m \rightsemiaction n))\\
                    &= \Delta(c)(m).
    \end{align*}
    In conclusion, $\Delta' = \Delta$. \qed
  \end{proof}

  \begin{corollary}
  \label{cor:independence-of-coordinate-system}
    Let $\ntuple{\mathcal{M}, \mathcal{K}} = \ntuple{\ntuple{M, G, \leftaction}, \ntuple{m_0, \family{g_{m_0, m}}_{m \in M}}}$ be a cell space and let $\mathcal{C} = \ntuple{\ntuple{\mathcal{M}, \mathcal{K}}, Q, N, \delta}$ be a cellular automaton. For each coordinate system $\mathcal{K}' = \ntuple{m_0', \family{g_{m_0', m}'}_{m \in M}}$ for $\mathcal{M}$, there is a cellular automaton $\ntuple{\ntuple{\mathcal{M}, \mathcal{K}'}, Q, N', \delta'}$ whose global transition function is identical to the one of $\mathcal{C}$. \qed
  \end{corollary}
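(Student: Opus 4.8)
The plan is to obtain the corollary as an application of Theorem~\ref{thm:independence-of-coordinate-system} for a convenient choice of subgroup and symmetry, and then to supply the one property the theorem stops short of giving, namely that the resulting semi-cellular automaton is in fact cellular. Concretely, I would instantiate Lemma~\ref{lem:liberation-and-coordinate-systems}, and hence Theorem~\ref{thm:independence-of-coordinate-system}, with $H = G$. Because $\leftaction$ is transitive there is an element $h \in G$ with $h \leftaction m_0 = m_0'$, and the containments $\set{g_{m_0, m} \suchthat m \in M} \cup \set{g_{m_0', m}' \suchthat m \in M} \subseteq H$ hold trivially. For this choice $H_0 = G_0 \cap G = G_0$, so $\bullet_{H_0}$ is the full action $\bullet$ of Definition~\ref{def:local-configuration}. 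Since $\mathcal{C}$ is a cellular automaton, its local transition function $\delta$ is $\bullet$-invariant and therefore $\bullet_{H_0}$-invariant. Thus Theorem~\ref{thm:independence-of-coordinate-system} applies and produces a semi-cellular automaton $\ntuple{\ntuple{\mathcal{M}, \mathcal{K}'}, Q, N', \delta'}$ with $N' = h \circ N$ and $\delta'(\ell') = \delta(n \mapsto \ell'(h \circ n))$ whose global transition function coincides with that of $\mathcal{C}$.

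It then remains to upgrade this quadruple from semi-cellular to cellular, that is, to show that $\delta'$ is invariant under the action of the stabiliser $G_0'$ of $m_0'$ on $Q^{N'}$ from Definition~\ref{def:local-configuration}. The structural input is that $h \in G_{m_0, m_0'}$, so Lemma~\ref{lem:stabiliser-vs-transporter} yields $G_0' = h G_0 h^{-1}$; hence every $g_0' \in G_0'$ can be written as $g_0' = h g_0 h^{-1}$ with $g_0 \in G_0$. Writing $\ell = (n \mapsto \ell'(h \circ n))$, so that $\delta'(\ell') = \delta(\ell)$, I would unwind $\delta'(g_0' \bullet \ell') = \delta(n \mapsto \ell'((g_0')^{-1} \cdot (h \circ n)))$ and use the definitions of $\circ$ (Lemma~\ref{lem:left-group-action-on-union-of-quotients-by-stabilisers}) and of the quotient action $\cdot$ to obtain, by a direct coset computation, the intertwining identity $(g_0')^{-1} \cdot (h \circ n) = h \circ (g_0^{-1} \cdot n)$ for all $n \in N$. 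This identifies $\delta'(g_0' \bullet \ell')$ with $\delta(n \mapsto \ell(g_0^{-1} \cdot n)) = \delta(g_0 \bullet \ell)$, which equals $\delta(\ell) = \delta'(\ell')$ by the $\bullet$-invariance of $\delta$. Hence $\delta'$ is invariant under $G_0'$, and the quadruple is a cellular automaton with the required global transition function.

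I expect the substantive step to be the last one, since Theorem~\ref{thm:independence-of-coordinate-system} only guarantees a semi-cellular automaton. The real work is the extra invariance of $\delta'$, which hinges on correctly relating the two stabilisers through the conjugation $G_0' = h G_0 h^{-1}$ and on carefully tracking how the action $\circ$ on $\bigcup_{m \in M} G \quotient G_m$ intertwines the neighbourhoods $N$ and $N' = h \circ N$; everything else is bookkeeping already delivered by the theorem.
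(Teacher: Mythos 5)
Your proposal is correct, and its first half is exactly the paper's proof: the paper derives the corollary from Theorem~\ref{thm:independence-of-coordinate-system} with $H = G$ and $h = g_{m_0, m_0'}$ (your $h$, any element with $h \leftaction m_0 = m_0'$, serves equally well), noting that $H_0 = G_0$ so that $\bullet_{H_0}$-invariance of $\delta$ is precisely cellularity of $\mathcal{C}$. Where you go beyond the paper is the second half: the paper's one-line proof silently leaves the upgrade from semi-cellular to cellular to the reader, and you supply it by the conjugation $G_0' = h G_0 h^{-1}$ from Lemma~\ref{lem:stabiliser-vs-transporter} together with the intertwining identity $(g_0')^{-1} \cdot (h \circ n) = h \circ (g_0^{-1} \cdot n)$; I checked this identity and the ensuing chain $\delta'(g_0' \bullet \ell') = \delta(g_0 \bullet \ell) = \delta(\ell) = \delta'(\ell')$, and both are correct. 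A shorter route to the same cellularity, more in the spirit of the surrounding text, is to invoke the corollary of Theorem~\ref{thm:local-invariance-versus-global-equivariance}: since $\mathcal{C}$ is cellular, $\Delta$ is $\inducedleftaction$-equivariant; the new semi-cellular automaton has the same global transition function, and the action $\inducedleftaction$ on $Q^M$ does not depend on the coordinate system, so the new automaton is cellular by that corollary applied over $\ntuple{\mathcal{M}, \mathcal{K}'}$. Your explicit computation buys a self-contained verification at the level of local transition functions; the equivariance route buys brevity.
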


  \begin{proof}
    This follows directly from Theorem~\ref{thm:independence-of-coordinate-system} with $H = G$ and $h = g_{m_0, m_0'}$. \qed
  \end{proof}

%

  \begin{theorem}
  \label{thm:local-invariance-versus-global-equivariance}
    Let $\mathcal{R} = \ntuple{\ntuple{M, G, \leftaction}, \ntuple{m_0, \family{g_{m_0, m}}_{m \in M}}}$ be a cell space, let $\mathcal{C} = \ntuple{\mathcal{R}, Q, N, \delta}$ be a semi-cellular automaton, and let $H$ be a subgroup of $G$ such that $\set{g_{m_0, m} \suchthat m \in M} \subseteq H$. 
    \begin{enumerate}
      \item\label{it:local-invariance-versus-global-equivariance:local-to-global}
            If the local transition function $\delta$ is $\bullet_{H_0}$-invariant, then the global transition function $\Delta$ is $\inducedleftaction_H$-equivariant.
      \item\label{it:local-invariance-versus-global-equivariance:global-to-local}
            If there is an $\inducedleftaction_H$-equivariant map $\Delta_0 \from Q^M \to Q^M$ such that
            \begin{align}
            \label{eq:local-invariance-versus-global-equivariance:global-transition-function-at-origins}
              \ForEach c \in Q^M \Holds \Delta_0(c)(m_0) = \delta(n \mapsto c(m_0 \rightsemiaction n)),
            \end{align}
            then the local transition function $\delta$ is $\bullet_{H_0}$-invariant.
      \item\label{it:local-invariance-versus-global-equivariance:equivalence}
            The local transition function $\delta$ is $\bullet_{H_0}$-invariant if and only if the global transition function $\Delta$ is $\inducedleftaction_H$-equivariant.
    \end{enumerate}
  \end{theorem}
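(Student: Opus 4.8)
The plan is to prove the three parts in order and to derive the equivalence in part~\ref{it:local-invariance-versus-global-equivariance:equivalence} from the two implications.

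For part~\ref{it:local-invariance-versus-global-equivariance:local-to-global}, I would fix $g \in H$, $c \in Q^M$, and $m \in M$, and show that $\Delta(g \inducedleftaction c)(m)$ equals $(g \inducedleftaction \Delta(c))(m) = \Delta(c)(g^{-1} \leftaction m)$. Expanding the latter by the definition of $\Delta$ gives $\delta([n \mapsto c((g^{-1} \leftaction m) \rightsemiaction n)])$. Since $g^{-1} \in H$ and all coordinates lie in $H$, the semi-commutativity of $\rightsemiaction$ with $\leftaction$ (Lemma~\ref{lem:semi-commutivity-of-liberation}, part~\ref{it:semi-commutivity-of-liberation:semi-commutes}) yields a $g_0 \in H_0$ with $(g^{-1} \leftaction m) \rightsemiaction n = g^{-1} \leftaction (m \rightsemiaction g_0 \cdot n)$ for every $n \in N$, which is precisely the hypothesis of Lemma~\ref{lem:semi-commutativity-induced-left-action-and-bullet}. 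That lemma rewrites the observed local configuration as $g_0^{-1} \bullet [n \mapsto (g \inducedleftaction c)(m \rightsemiaction n)]$. Applying $\delta$ and using its $\bullet_{H_0}$-invariance to discard the $g_0^{-1}$-prefactor collapses the expression to $\delta([n \mapsto (g \inducedleftaction c)(m \rightsemiaction n)]) = \Delta(g \inducedleftaction c)(m)$, as required.

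For part~\ref{it:local-invariance-versus-global-equivariance:global-to-local}, I would fix $h_0 \in H_0$ and $\ell \in Q^N$ and prove $\delta(h_0 \bullet \ell) = \delta(\ell)$. By Remark~\ref{rem:local-configuration-induces-global-configuration} I choose a global configuration $c$ with $c(m_0 \rightsemiaction n) = \ell(n)$ for all $n$, so that the origin condition~\eqref{eq:local-invariance-versus-global-equivariance:global-transition-function-at-origins} gives $\Delta_0(c)(m_0) = \delta(\ell)$. On the one hand, $\inducedleftaction_H$-equivariance of $\Delta_0$ together with $h_0^{-1} \leftaction m_0 = m_0$ gives $\Delta_0(h_0 \inducedleftaction c)(m_0) = \Delta_0(c)(h_0^{-1} \leftaction m_0) = \Delta_0(c)(m_0) = \delta(\ell)$. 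On the other hand, the origin condition applied to $h_0 \inducedleftaction c$ gives $\Delta_0(h_0 \inducedleftaction c)(m_0) = \delta([n \mapsto c(h_0^{-1} \leftaction (m_0 \rightsemiaction n))])$. The crucial step is to rewrite $h_0^{-1} \leftaction (m_0 \rightsemiaction n)$: substituting $h_0^{-1} \cdot n$ for the quotient element in the defect-exhaustion identity (Lemma~\ref{lem:semi-commutivity-of-liberation}, part~\ref{it:semi-commutivity-of-liberation:exhausts}) and using $h_0^{-1} \leftaction m_0 = m_0$ yields $h_0^{-1} \leftaction (m_0 \rightsemiaction n) = m_0 \rightsemiaction h_0^{-1} \cdot n$. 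Hence $c(h_0^{-1} \leftaction (m_0 \rightsemiaction n)) = \ell(h_0^{-1} \cdot n) = (h_0 \bullet \ell)(n)$, so the second evaluation equals $\delta(h_0 \bullet \ell)$. Comparing the two evaluations gives $\delta(h_0 \bullet \ell) = \delta(\ell)$.

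Finally, part~\ref{it:local-invariance-versus-global-equivariance:equivalence} follows: the forward implication is part~\ref{it:local-invariance-versus-global-equivariance:local-to-global}, and for the converse I would take $\Delta_0 = \Delta$, which is $\inducedleftaction_H$-equivariant by assumption and satisfies~\eqref{eq:local-invariance-versus-global-equivariance:global-transition-function-at-origins} by the very definition of $\Delta$, so part~\ref{it:local-invariance-versus-global-equivariance:global-to-local} applies. I expect the main obstacle to be the bookkeeping in part~\ref{it:local-invariance-versus-global-equivariance:global-to-local}: invoking the defect-exhaustion identity with the shifted argument $h_0^{-1} \cdot n$ is exactly what converts the translation $\inducedleftaction$ of the global configuration into the $\bullet$-action on the local configuration, and keeping the inverses straight while ensuring that the shifted neighbour $h_0^{-1} \cdot n$ indeed lies in $N$ (which relies on $G_0 \cdot N \subseteq N$) is the delicate point.
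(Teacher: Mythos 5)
Your proposal is correct and follows essentially the same route as the paper: part~\ref{it:local-invariance-versus-global-equivariance:local-to-global} via the semi-commutativity of $\rightsemiaction$ with $\leftaction$ combined with Lemma~\ref{lem:semi-commutativity-induced-left-action-and-bullet} and $\bullet_{H_0}$-invariance, part~\ref{it:local-invariance-versus-global-equivariance:global-to-local} via the defect-exhaustion identity at the origin evaluated on a configuration realising $\ell$, and part~\ref{it:local-invariance-versus-global-equivariance:equivalence} by taking $\Delta_0 = \Delta$. The only cosmetic difference is that in part~\ref{it:local-invariance-versus-global-equivariance:global-to-local} you unwind the identity $h_0^{-1} \leftaction (m_0 \rightsemiaction n) = m_0 \rightsemiaction h_0^{-1} \cdot n$ by hand (correctly noting that $G_0 \cdot N \subseteq N$ keeps $h_0^{-1} \cdot n$ in $N$) where the paper instead cites Lemma~\ref{lem:semi-commutativity-induced-left-action-and-bullet} a second time.
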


  \begin{proof}
    \begin{enumerate}
      \item Let $\delta$ be $\bullet_{H_0}$-invariant. Furthermore, let $h \in H$, let $c \in Q^M$, and let $m \in M$. According to Item~\ref{it:semi-commutivity-of-liberation:semi-commutes} of Lemma~\ref{lem:semi-commutivity-of-liberation}, there is an $h_0 \in H_0$ such that
            \begin{equation*}
              \ForEach n \in N \Holds (h^{-1} \leftaction m) \rightsemiaction n = h^{-1} \leftaction (m \rightsemiaction h_0 \cdot n).
            \end{equation*}
            Hence, according to Lemma~\ref{lem:semi-commutativity-induced-left-action-and-bullet},
            \begin{align*}
              (h \inducedleftaction \Delta(c))(m) &= \Delta(c)(h^{-1} \leftaction m)\\
                                                  &= \delta(n \mapsto c((h^{-1} \leftaction m) \rightsemiaction n))\\
                                                  &= \delta(h_0^{-1} \bullet [n \mapsto (h \inducedleftaction c)(m \rightsemiaction n)])\\
                                                  &= \delta(n \mapsto (h \inducedleftaction c)(m \rightsemiaction n))\\
                                                  &= \Delta(h \inducedleftaction c)(m).
            \end{align*}
            In conclusion, $\Delta$ is $\inducedleftaction_H$-equivariant.
      \item Let $\Delta_0$ be an $\inducedleftaction_H$-equivariant map such that \eqref{eq:local-invariance-versus-global-equivariance:global-transition-function-at-origins} holds. Furthermore, let $\ell \in Q^N$ and let $h_0 \in H_0$. According to Remark~\ref{rem:local-configuration-induces-global-configuration}, there is a $c \in Q^M$ such that $[n \mapsto c(m_0 \rightsemiaction n)] = \ell$.
            According to Item~\ref{it:semi-commutivity-of-liberation:exhausts} of Lemma~\ref{lem:semi-commutivity-of-liberation},
            \begin{equation*}
              \ForEach n \in N \Holds (h_0^{-1} \leftaction m) \rightsemiaction n = h_0^{-1} \leftaction (m \rightsemiaction h_0 \cdot n).
            \end{equation*}
            Hence, according to Lemma~\ref{lem:semi-commutativity-induced-left-action-and-bullet},
            \begin{align*}
              \delta(h_0 \bullet \ell)
              &= \delta(h_0 \bullet [n \mapsto c(m_0 \rightsemiaction n)])\\
              &= \delta(h_0 \bullet [n \mapsto c((h_0^{-1} \leftaction m_0) \rightsemiaction n)])\\
              &= \delta(n \mapsto (h_0 \inducedleftaction c)(m_0 \rightsemiaction n))\\
              &= \Delta_0(h_0 \inducedleftaction c)(m_0).
            \end{align*}
            Because $\Delta_0$ is $\inducedleftaction_H$-equivariant,
            \begin{align*}
              \Delta_0(h_0 \inducedleftaction c)(m_0)
              &= (h_0 \inducedleftaction \Delta_0(c))(m_0)\\
              &= \Delta_0(c)(h_0^{-1} \leftaction m_0)\\
              &= \Delta_0(c)(m_0)\\
              &= \delta(\ell).
            \end{align*}
            Put the last two chains of equalities together to see that $\delta(h_0 \bullet \ell) = \delta(\ell)$. In conclusion, $\delta$ is $\bullet_{H_0}$-invariant.
      \item If $\delta$ is $\bullet_{H_0}$-invariant, then $\Delta$ is $\inducedleftaction_H$-equivariant by Item~\ref{it:local-invariance-versus-global-equivariance:local-to-global}. On the other hand, if $\Delta$ is $\inducedleftaction_H$-equivariant, then $\delta$ is $\bullet_{H_0}$-invariant by Item~\ref{it:local-invariance-versus-global-equivariance:global-to-local}. \qed
    \end{enumerate}
  \end{proof}

  \begin{corollary}
    Let $\mathcal{C}$ be a semi-cellular automaton. It is a cellular automaton if and only if its global transition function is $\inducedleftaction$-equivariant. \qed
  \end{corollary}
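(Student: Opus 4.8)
The plan is to recognise this corollary as the special case $H = G$ of Theorem~\ref{thm:local-invariance-versus-global-equivariance}. First I would recall the relevant definition: by Definition~\ref{def:cellular-automaton}, a semi-cellular automaton $\mathcal{C}$ is a cellular automaton precisely when its local transition function $\delta$ is $\bullet$-invariant, where $\bullet$ is the full left action of the stabiliser $G_0$ on $Q^N$ introduced in Definition~\ref{def:local-configuration}. So the task reduces to showing that $\bullet$-invariance of $\delta$ is equivalent to $\inducedleftaction$-equivariance of $\Delta$.

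Next I would instantiate Theorem~\ref{thm:local-invariance-versus-global-equivariance} with $H = G$. The hypothesis $\set{g_{m_0, m} \suchthat m \in M} \subseteq H$ is then automatically satisfied, since every coordinate $g_{m_0, m}$ is by construction an element of $G$. With this choice the subgroup-restricted notions collapse to the unrestricted ones: the stabiliser $H_0 = G_{m_0} \cap H$ equals $G_0$, so $\bullet_{H_0}$ is the full action $\bullet$ on $Q^N$, and likewise $\inducedleftaction_H$ is the full induced action $\inducedleftaction$ on $Q^M$ from Definition~\ref{def:global-configuration}. Verifying that these identifications hold is immediate from the definitions and is the only thing to check.

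Item~\ref{it:local-invariance-versus-global-equivariance:equivalence} of the theorem, read under these identifications, states exactly that $\delta$ is $\bullet$-invariant if and only if $\Delta$ is $\inducedleftaction$-equivariant. Combining this equivalence with the definition of a cellular automaton recalled in the first paragraph yields the claim. The substantive work — the interplay between the semi-commutativity of $\rightsemiaction$ with $\leftaction$ (Lemma~\ref{lem:semi-commutivity-of-liberation}) and the compatibility established in Lemma~\ref{lem:semi-commutativity-induced-left-action-and-bullet} — is already discharged in the proof of the theorem, so I expect no genuine obstacle at the level of the corollary; the entire content is the observation that taking $H = G$ specialises the $H$-restricted equivalence to the unrestricted one.
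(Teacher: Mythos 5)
Your proposal is correct and matches the paper's own proof, which likewise derives the corollary from Item~\ref{it:local-invariance-versus-global-equivariance:equivalence} of Theorem~\ref{thm:local-invariance-versus-global-equivariance} by taking $H = G$, so that $H_0 = G_0$ and the restricted notions $\bullet_{H_0}$ and $\inducedleftaction_H$ coincide with $\bullet$ and $\inducedleftaction$. No gaps.
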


  \begin{proof}
    This follows directly from Item~\ref{it:local-invariance-versus-global-equivariance:equivalence} of Theorem~\ref{thm:local-invariance-versus-global-equivariance} with $H = G$. \qed
  \end{proof}

  \begin{theorem}
  \label{thm:determination-of-cellular-automata-by-behaviour-at-origin}
    Let $\mathcal{R} = \ntuple{\ntuple{M, G, \leftaction}, \ntuple{m_0, \family{g_{m_0, m}}_{m \in M}}}$ be a cell space, let $\mathcal{C} = \ntuple{\mathcal{R}, Q, N, \delta}$ be a semi-cellular automaton, let $\Delta_0$ be a map from $Q^M$ to $Q^M$, and let $H$ be a subgroup of $G$ such that $\set{g_{m_0, m} \suchthat m \in M} \subseteq H$. The following statements are equivalent:
    \begin{enumerate}
      \item The local transition function $\delta$ is $\bullet_{H_0}$-invariant and the global transition function of $\mathcal{C}$ is $\Delta_0$;
      \item The global transition function $\Delta_0$ is $\inducedleftaction_H$-equivariant and
            \begin{equation}
            \label{eq:global-transition-function-at-origins}
              \ForEach c \in Q^M \Holds \Delta_0(c)(m_0) = \delta(n \mapsto c(m_0 \rightsemiaction n)).
            \end{equation}
    \end{enumerate}
  \end{theorem}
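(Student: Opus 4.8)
The plan is to reduce both implications to Theorem~\ref{thm:local-invariance-versus-global-equivariance}, which already ties $\bullet_{H_0}$-invariance of $\delta$ to $\inducedleftaction_H$-equivariance of the genuine global transition function $\Delta$ of $\mathcal{C}$. The only content not already packaged in that theorem is the identification $\Delta = \Delta_0$, and for this I would use that $\Delta$ and $\Delta_0$ are both $\inducedleftaction_H$-equivariant and agree in the origin $m_0$, propagating that agreement to every cell by transitivity.

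First I would treat the direction from Item~1 to Item~2. Assuming $\delta$ is $\bullet_{H_0}$-invariant and that the global transition function $\Delta$ of $\mathcal{C}$ equals $\Delta_0$, Item~\ref{it:local-invariance-versus-global-equivariance:local-to-global} of Theorem~\ref{thm:local-invariance-versus-global-equivariance} gives that $\Delta$, and hence $\Delta_0$, is $\inducedleftaction_H$-equivariant. Equation~\eqref{eq:global-transition-function-at-origins} is then immediate: by Definition~\ref{def:global-transition-function} one has $\Delta(c)(m_0) = \delta(n \mapsto c(m_0 \rightsemiaction n))$ for every $c \in Q^M$, and substituting $\Delta_0$ for $\Delta$ yields the claim.

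For the converse I would assume $\Delta_0$ is $\inducedleftaction_H$-equivariant and that \eqref{eq:global-transition-function-at-origins} holds. Since \eqref{eq:global-transition-function-at-origins} is literally \eqref{eq:local-invariance-versus-global-equivariance:global-transition-function-at-origins}, Item~\ref{it:local-invariance-versus-global-equivariance:global-to-local} of Theorem~\ref{thm:local-invariance-versus-global-equivariance} immediately gives that $\delta$ is $\bullet_{H_0}$-invariant. It then remains to show that the global transition function $\Delta$ of $\mathcal{C}$ equals $\Delta_0$. By Item~\ref{it:local-invariance-versus-global-equivariance:local-to-global} of the same theorem, $\Delta$ is now also $\inducedleftaction_H$-equivariant, and $\Delta$ and $\Delta_0$ agree in the origin, since $\Delta(c)(m_0) = \delta(n \mapsto c(m_0 \rightsemiaction n)) = \Delta_0(c)(m_0)$ for every $c$ — the first equality by Definition~\ref{def:global-transition-function}, the second by \eqref{eq:global-transition-function-at-origins}.

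The main step, and the only place where the global structure is genuinely exploited, is propagating this origin-agreement to an arbitrary cell $m$. Here I would use that $g_{m_0, m} \in H$ and $g_{m_0, m} \leftaction m_0 = m$, so that $H$ acts transitively. Writing $h = g_{m_0, m}$ and unfolding Definition~\ref{def:global-configuration}, for $F$ equal to either $\Delta$ or $\Delta_0$ one computes $F(c)(m) = F(c)(h \leftaction m_0) = (h^{-1} \inducedleftaction F(c))(m_0) = F(h^{-1} \inducedleftaction c)(m_0)$, the last equality by $\inducedleftaction_H$-equivariance applied with $h^{-1} \in H$. Applying this to both $\Delta$ and $\Delta_0$ and invoking their agreement in the origin at the configuration $h^{-1} \inducedleftaction c$ gives $\Delta(c)(m) = \Delta(h^{-1} \inducedleftaction c)(m_0) = \Delta_0(h^{-1} \inducedleftaction c)(m_0) = \Delta_0(c)(m)$. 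As $c$ and $m$ were arbitrary, $\Delta = \Delta_0$, which closes the equivalence. I expect everything apart from this final propagation to be pure bookkeeping against Theorem~\ref{thm:local-invariance-versus-global-equivariance}; the propagation is where care is needed, since it is the one argument that combines equivariance with the transitivity supplied by the coordinates $g_{m_0, m}$.
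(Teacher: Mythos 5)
Your proposal is correct, and the forward direction coincides with the paper's. In the converse direction you take a mildly but genuinely different route for the identification of $\Delta_0$ with the global transition function $\Delta$ of $\mathcal{C}$: having obtained $\bullet_{H_0}$-invariance of $\delta$ from Item~\ref{it:local-invariance-versus-global-equivariance:global-to-local}, you feed it back into Item~\ref{it:local-invariance-versus-global-equivariance:local-to-global} to conclude that $\Delta$ itself is $\inducedleftaction_H$-equivariant, and then finish with the abstract observation that two $\inducedleftaction_H$-equivariant maps agreeing at $m_0$ on every configuration agree everywhere, via $F(c)(m) = F(g_{m_0,m}^{-1} \inducedleftaction c)(m_0)$. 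The paper instead propagates the origin identity by a direct computation: it unwinds $\Delta_0(c)(m)$ to $\delta(n \mapsto (h \inducedleftaction c)(m_0 \rightsemiaction n))$ with $h = g_{m_0,m}^{-1}$ and then converts this back to $\delta(n \mapsto c(m \rightsemiaction n))$ by re-invoking Lemma~\ref{lem:semi-commutivity-of-liberation}, Lemma~\ref{lem:semi-commutativity-induced-left-action-and-bullet}, and the $\bullet_{H_0}$-invariance of $\delta$ --- in effect inlining the argument that proves equivariance of $\Delta$. Your factoring is more modular, reusing the already-proved equivariance as a black box and never touching the semi-action machinery again; the paper's computation has the minor advantage of exhibiting the explicit formula $\Delta_0(c)(m) = \delta(n \mapsto c(m \rightsemiaction n))$ at every cell. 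Both arguments are sound, and your propagation step is carried out correctly (the signs and sides of the shifts check out against Definition~\ref{def:global-configuration}).
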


  \begin{proof}
    First, let $\delta$ be $\bullet_{H_0}$-invariant and let $\Delta_0$ be the global transition function of $\mathcal{C}$. According to Item~\ref{it:local-invariance-versus-global-equivariance:local-to-global} of Theorem~\ref{thm:local-invariance-versus-global-equivariance}, the map $\Delta_0$ is $\inducedleftaction_H$-equivariant and, according to Definition~\ref{def:global-transition-function}, \eqref{eq:global-transition-function-at-origins} holds.

    Secondly, let $\Delta_0$ be $\inducedleftaction_H$-equivariant and let \eqref{eq:global-transition-function-at-origins} hold. According to Item~\ref{it:local-invariance-versus-global-equivariance:global-to-local} of Theorem~\ref{thm:local-invariance-versus-global-equivariance}, the local transition function $\delta$ is $\bullet_{H_0}$-invariant. Furthermore, let $c \in Q^M$ and let $m \in M$. Put $h = g_{m_0, m}^{-1} \in H$. Then,
    \begin{equation*}
      \Delta_0(c)(m) = \Delta_0(c)(h^{-1} \leftaction m_0) = (h \inducedleftaction \Delta_0(c))(m_0).
    \end{equation*}
    Because $\Delta_0$ is $\inducedleftaction_H$-equivariant,
    \begin{equation*}
      (h \inducedleftaction \Delta_0(c))(m_0)
      = \Delta_0(h \inducedleftaction c)(m_0)
      = \delta(n \mapsto (h \inducedleftaction c)(m_0 \rightsemiaction n)). 
    \end{equation*}
    According to Lemma~\ref{lem:semi-commutivity-of-liberation}, there is an $h_0 \in H_0$ such that, for each $n \in N$, we have $(h^{-1} \leftaction m_0) \rightsemiaction n = h^{-1} \leftaction (m_0 \rightsemiaction h_0 \cdot n)$. Therefore, according to Lemma~\ref{lem:semi-commutativity-induced-left-action-and-bullet},
    \begin{align*}
      \delta(n \mapsto (h \inducedleftaction c)(m_0 \rightsemiaction n))
      &= \delta(h_0 \bullet [n \mapsto c((h^{-1} \leftaction m_0) \rightsemiaction n)])\\
      &= \delta(h_0 \bullet [n \mapsto c(m \rightsemiaction n)]).
    \end{align*}
    Because $\delta$ is $\bullet_{H_0}$-invariant,
    \begin{equation*}
      \delta(h_0 \bullet [n \mapsto c(m \rightsemiaction n)]) = \delta(n \mapsto c(m \rightsemiaction n)).
    \end{equation*}
    Put the last four chains of equalities together to see that
      $\Delta_0(c)(m) = \delta(n \mapsto c(m \rightsemiaction n))$.
    In conclusion, $\Delta_0$ is the global transition function of $\mathcal{C}$. \qed
  \end{proof}

  \begin{theorem} 
  \label{thm:composition-of-cellular-automata}
    Let $\mathcal{R} = \ntuple{\ntuple{M, G, \leftaction}, \ntuple{m_0, \family{g_{m_0, m}}_{m \in M}}}$ be a cell space, let $\mathcal{C} = \ntuple{\mathcal{R}, Q, N, \delta}$ and $\mathcal{C}' = \ntuple{\mathcal{R}, Q, N', \delta'}$ be two semi-cellular automata, and let $H$ be a subgroup of $G$ such that $\set{g_{m_0, m} \suchthat m \in M} \subseteq H$, and $\delta$ and $\delta'$ are $\bullet_{H_0}$-invariant. Furthermore, let 
    \begin{equation*}
      N'' = \set{g \cdot n' \suchthat n \in N, n' \in N', g \in n}
    \end{equation*}
    and let
    \begin{align*}
      \delta'' \from Q^{N''} &\to     Q,\\
                      \ell'' &\mapsto \delta(n \mapsto \delta'(n' \mapsto \ell''(g_{m_0, m_0 \rightsemiaction n} \cdot n'))).
    \end{align*}
    The quadruple $\mathcal{C}'' = \ntuple{\mathcal{R}, Q, N'', \delta''}$ is a semi-cellular automaton whose local transition function is $\bullet_{H_0}$-invariant and whose global transition function is $\Delta \after \Delta'$.
  \end{theorem}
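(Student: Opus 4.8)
The plan is to avoid computing the composition cell-by-cell on all of $M$ and instead reduce everything to the behaviour at the origin, so that Theorem~\ref{thm:determination-of-cellular-automata-by-behaviour-at-origin} does the heavy lifting. Concretely, let $\Delta$ and $\Delta'$ be the global transition functions of $\mathcal{C}$ and $\mathcal{C}'$, put $\Delta_0 = \Delta \after \Delta'$, and aim to verify the second statement of that theorem for the quadruple $\mathcal{C}''$. The theorem then yields \emph{simultaneously} that $\delta''$ is $\bullet_{H_0}$-invariant and that $\Delta \after \Delta'$ is the global transition function of $\mathcal{C}''$, which is precisely the assertion.

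First I would check that $\mathcal{C}''$ is a semi-cellular automaton at all. Each element $g \cdot n'$ of $N''$ lies in $G \quotient G_0$; and for $g_0 \in G_0$ and $n'' = g \cdot n'$ with $g \in n \in N$ and $n' \in N'$ one has $g_0 \cdot n'' = (g_0 g) \cdot n'$, where $g_0 g$ represents the coset $g_0 \cdot n \in G_0 \cdot N \subseteq N$, so that $g_0 \cdot n'' \in N''$ and hence $G_0 \cdot N'' \subseteq N''$. I would also note that $\delta''$ is well-defined because the coordinate $g_{m_0, m_0 \rightsemiaction n}$ is a representative of the coset $n$: both $g_{m_0, m_0 \rightsemiaction n}$ and any representative of $n$ send $m_0$ to $m_0 \rightsemiaction n$, so they differ by an element of $G_0$; consequently $g_{m_0, m_0 \rightsemiaction n} \cdot n' \in N''$ lies in the domain of the argument $\ell''$.

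Next I would establish the two hypotheses of the second statement. Equivariance is cheap: by Item~\ref{it:local-invariance-versus-global-equivariance:local-to-global} of Theorem~\ref{thm:local-invariance-versus-global-equivariance} both $\Delta$ and $\Delta'$ are $\inducedleftaction_H$-equivariant, and a two-line computation $(\Delta \after \Delta')(h \inducedleftaction c) = \Delta(h \inducedleftaction \Delta'(c)) = h \inducedleftaction (\Delta \after \Delta')(c)$ shows the composition of $\inducedleftaction_H$-equivariant self-maps of $Q^M$ is again $\inducedleftaction_H$-equivariant. The core of the argument is the origin identity \eqref{eq:global-transition-function-at-origins}, for which I would unwind $(\Delta \after \Delta')(c)(m_0) = \delta(n \mapsto \Delta'(c)(m_0 \rightsemiaction n))$ and then compute each inner value $\Delta'(c)(m_0 \rightsemiaction n) = \delta'(n' \mapsto c((m_0 \rightsemiaction n) \rightsemiaction n'))$. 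The one non-formal step is rewriting the doubly semi-acted cell: setting $m = m_0 \rightsemiaction n$, the definition of $\rightsemiaction$ in Lemma~\ref{lem:liberation} gives, for $n' = g' G_0$, that $(m_0 \rightsemiaction n) \rightsemiaction n' = g_{m_0, m} g' \leftaction m_0 = m_0 \rightsemiaction (g_{m_0, m} \cdot n')$, that is, $(m_0 \rightsemiaction n) \rightsemiaction n' = m_0 \rightsemiaction (g_{m_0, m_0 \rightsemiaction n} \cdot n')$. Substituting this turns $(\Delta \after \Delta')(c)(m_0)$ into $\delta(n \mapsto \delta'(n' \mapsto c(m_0 \rightsemiaction (g_{m_0, m_0 \rightsemiaction n} \cdot n'))))$, which is exactly $\delta''(n'' \mapsto c(m_0 \rightsemiaction n''))$, establishing \eqref{eq:global-transition-function-at-origins}.

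With both hypotheses of the second statement verified, the first statement of Theorem~\ref{thm:determination-of-cellular-automata-by-behaviour-at-origin} delivers that $\delta''$ is $\bullet_{H_0}$-invariant and that the global transition function of $\mathcal{C}''$ is $\Delta_0 = \Delta \after \Delta'$. I expect the main obstacle to be purely bookkeeping: keeping the nested maps over $N$ and $N'$ straight and making sure the rewriting $(m_0 \rightsemiaction n) \rightsemiaction n' = m_0 \rightsemiaction (g_{m_0, m_0 \rightsemiaction n} \cdot n')$ is applied with the correct coordinate, so that the argument of $\ell''$ matches the defining formula of $\delta''$ verbatim.
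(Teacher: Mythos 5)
Your proposal is correct and follows essentially the same route as the paper's own proof: establish $\inducedleftaction_H$-equivariance of $\Delta \after \Delta'$ via Theorem~\ref{thm:local-invariance-versus-global-equivariance}, verify the origin identity $(m_0 \rightsemiaction n) \rightsemiaction n' = m_0 \rightsemiaction g_{m_0, m_0 \rightsemiaction n} \cdot n'$ (which the paper also reduces to $g_{m_0,m_0} = e_G$), and conclude with Theorem~\ref{thm:determination-of-cellular-automata-by-behaviour-at-origin}. Your additional checks that $G_0 \cdot N'' \subseteq N''$ and that $\delta''$ is well-defined are details the paper leaves implicit.
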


  \begin{proof}
    The quadruple $\mathcal{C}'' = \ntuple{\mathcal{R}, Q, N'', \delta''}$ is a semi-cellular automaton. Because $\delta$ and $\delta'$ are $\bullet_{H_0}$-invariant, according to Item~\ref{it:local-invariance-versus-global-equivariance:local-to-global} of Theorem~\ref{thm:local-invariance-versus-global-equivariance}, the maps $\Delta$ and $\Delta'$ are $\inducedleftaction_H$-equivariant and hence $\Delta \after \Delta'$ also. Because $g_{m_0, m_0} = e_G$, 
    \begin{equation*} 
      \ForEach n \in N \ForEach n' \in N' \Holds (m_0 \rightsemiaction n) \rightsemiaction n' = m_0 \rightsemiaction g_{m_0, m_0 \rightsemiaction n} \cdot n'.
    \end{equation*}
    Thus, for each $c \in Q^M$,
    \begin{align*}
      (\Delta \after \Delta')(c)(m_0) &= \delta(n \mapsto \Delta'(c)(m_0 \rightsemiaction n))\\
                                      &= \delta(n \mapsto \delta'(n' \mapsto c((m_0 \rightsemiaction n) \rightsemiaction n')))\\
                                      &= \delta(n \mapsto \delta'(n' \mapsto c(m_0 \rightsemiaction g_{m_0, m_0 \rightsemiaction n} \cdot n')))\\
                                      &= \delta''(n'' \mapsto c(m_0 \rightsemiaction n'')).
    \end{align*}
    Therefore, according to Theorem~\ref{thm:determination-of-cellular-automata-by-behaviour-at-origin}, the local transition function $\delta''$ is $\bullet_{H_0}$-invariant and the global transition function of $\mathcal{C}''$ is $\Delta \after \Delta'$. \qed
  \end{proof}


  \section{Curtis-Hedlund-Lyndon Theorems} 
  \label{sec:characterisation}

  In this section we equip the the set of global configurations with a uniform and a topological structure, and prove a uniform and a topological variant of the Curtis-Hedlund-Lyndon theorem. In Main Theorem~\ref{thm:uniform-curtis-hedlund-lyndon}, the uniform variant, we show that global transition functions are characterised by $\inducedleftaction$-equivariance and uniform continuity. And in its Corollary~\ref{cor:curtis-hedlund-lyndon}, the topological variant, that under the assumption that the set of states is finite they are characterised by $\inducedleftaction$-equivariance and continuity.

  \begin{definition}
    Let $G$ be a group equipped with a topology. It is called \define{topological} if and only if the maps
    \begin{equation*}
      \left\{
      \begin{aligned}
        G \times G &\to     G,\\
            (g,g') &\mapsto g g',
      \end{aligned}
      \right\}
      \text{ and }
      \left\{
      \begin{aligned}
        G &\to     G,\\
        g &\mapsto g^{-1},
      \end{aligned}
      \right\}
    \end{equation*}
    are continuous, where $G \times G$ is equipped with the product topology.
  \end{definition}

  \begin{definition} 
    Let $M$ be a set, let $\mathfrak{L}$ be a subset of the power set of $M$, and let $T$ be a subset of $M$. The set $T$ is called \define{transversal of $\mathfrak{L}$} if and only if there is a surjective map $f \from \mathfrak{L} \to T$ such that for each set $A \in \mathfrak{L}$ we have $f(A) \in A$.
  \end{definition}

  \begin{definition}
    Let $M$ and $M'$ be two topological spaces and let $f$ be a continuous map from $M$ to $M'$. The map $f$ is called
    \begin{enumerate}
      \item \define{proper} if and only if, for each compact subset $K$ of $M'$, its preimage $f^{-1}(K)$ is a compact subset of $M$; 
      \item \define{semi-proper} if and only if, for each compact subset $K$ of $M'$, each transversal of $\set{f^{-1}(k) \suchthat k \in K}$ is included in a compact subset of $M$.
    \end{enumerate}
  \end{definition}

  \begin{definition}
    Let $M$ be a topological space, let $G$ be a topological group, and let $\mathcal{M} = \ntuple{M, G, \leftaction}$ be a left group set. The group set $\mathcal{M}$ is called
    \begin{enumerate}
      \item \define{topological} if and only if the map $\leftaction$ is continuous;
      \item \define{proper} if and only if it is topological and the so-called \define{action map}
            \begin{align*}
              \alpha \from G \times M &\to     M \times M,\\
                               (g, m) &\mapsto (g \leftaction m, m),
            \end{align*}
            is proper, where $G \times M$ and $M \times M$ are equipped with their respective product topology;
      \item \define{semi-proper} if and only if it is topological and its action map is semi-proper.
    \end{enumerate}
  \end{definition}

  \begin{remark}
    Each proper map is semi-proper and each proper left group set is semi-proper.
  \end{remark}

  \begin{lemma}
  \label{lem:proper-compact-subset-of-group}
    Let $\mathcal{M} = \ntuple{M, G, \leftaction}$ be a semi-proper left group set, and let $K$ and $K'$ be two compact subsets of $M$. Each transversal of $\set{G_{k', k} \suchthat (k, k') \in K \times K'}$ is included in a compact subset of $G$.
  \end{lemma}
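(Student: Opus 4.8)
The plan is to deduce this from the semi-properness of the action map $\alpha \from G \times M \to M \times M$, $(g, m) \mapsto (g \leftaction m, m)$, applied to the compact set $K \times K'$. The first step is the computation of the fibres of $\alpha$: for each pair $(k, k') \in K \times K'$ one has $\alpha^{-1}((k, k')) = G_{k', k} \times \set{k'}$, since $\alpha(g, m) = (k, k')$ forces $m = k'$ and $g \leftaction k' = k$, that is $g \in G_{k', k}$. Hence the family $\set{\alpha^{-1}(p) \suchthat p \in K \times K'}$ equals $\set{G_{k', k} \times \set{k'} \suchthat (k, k') \in K \times K'}$, and it contains the empty set if and only if the family $\mathfrak{L} = \set{G_{k', k} \suchthat (k, k') \in K \times K'}$ does. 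In that degenerate case neither family admits a transversal and the statement holds vacuously, so I may assume all these sets are non-empty.

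The second step is a lifting construction. Let $T$ be an arbitrary transversal of $\mathfrak{L}$, witnessed by a surjection $f \from \mathfrak{L} \to T$ with $f(A) \in A$ for each $A \in \mathfrak{L}$. Writing $\mathfrak{L}' = \set{\alpha^{-1}(p) \suchthat p \in K \times K'}$, each non-empty $L \in \mathfrak{L}'$ has the form $A \times \set{k'}$ with both $A \in \mathfrak{L}$ and $k'$ uniquely recoverable from $L$ (project $L$ to its second coordinate to obtain $k'$, then to its first coordinate to obtain $A$). I would therefore set $\tilde f(L) = (f(A), k')$; then $\tilde f(L) \in L$, so its image $\tilde T = \tilde f(\mathfrak{L}')$ is a transversal of $\mathfrak{L}'$. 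Moreover, the assignment $L \mapsto A$ maps $\mathfrak{L}'$ onto $\mathfrak{L}$, so the projection $\pi_G \from G \times M \to G$ satisfies $\pi_G(\tilde T) = f(\mathfrak{L}) = T$.

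The third step concludes. Since $K \times K'$ is compact in $M \times M$ and $\alpha$ is semi-proper, the transversal $\tilde T$ of $\set{\alpha^{-1}(p) \suchthat p \in K \times K'}$ is included in a compact subset $C$ of $G \times M$. The projection $\pi_G$ is continuous, so $\pi_G(C)$ is a compact subset of $G$, and $T = \pi_G(\tilde T) \subseteq \pi_G(C)$. Thus every transversal $T$ lies in a compact subset of $G$, as claimed.

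I expect the main obstacle to be the bookkeeping in the lifting step: verifying that $\tilde f$ is well-defined, i.e. that $A$ and $k'$ are genuinely determined by a non-empty fibre $L$; that $\tilde T$ is a transversal of $\mathfrak{L}'$ in the precise sense of the definition; and that its $G$-projection is exactly $T$ rather than merely a superset. The fibre computation and the final compactness argument are then routine once these identifications are pinned down.
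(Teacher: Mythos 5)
Your proposal is correct and follows essentially the same route as the paper: compute the fibres $\alpha^{-1}((k,k')) = G_{k',k} \times \set{k'}$, apply semi-properness of the action map to the compact set $K \times K'$, and project to $G$. The only difference is that you make explicit the lifting of a transversal of $\set{G_{k',k}}$ to a transversal of the fibre family with the correct $G$-projection (and the vacuous case of empty transporters), a bookkeeping step the paper's proof leaves implicit.
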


  \begin{proof}
    According to Tychonoff's theorem and because product and subspace topologies behave well with each other, the set $K \times K'$ is a compact subset of $M \times M$. Because the action map $\alpha$ of $\mathcal{M}$ is semi-proper and the preimage of each tuple $(k, k') \in K \times K'$ under $\alpha$ is $G_{k', k} \times \set{k'}$, each transversal of
    \begin{equation*}
      \set{G_{k', k} \times \set{k'} \suchthat (k, k') \in K \times K'}
    \end{equation*}
    is included in a compact subset of $G \times M$. Because the canonical projection $\pi$ of $G \times M$ onto $G$ is continuous, each transversal of
    \begin{equation*} 
      \set{G_{k', k} \suchthat (k, k') \in K \times K'}
    \end{equation*}
    is included in a compact subset of $G$. \qed
  \end{proof}

%

  \begin{lemma}
  \label{lem:discrete-semi-proper-cell-space}
    Let $\mathcal{M} = \ntuple{M, G, \leftaction}$ be a left group set. Equip $M$ and $G$ with their respective discrete topology. The group set $\mathcal{M}$ is semi-proper.
  \end{lemma}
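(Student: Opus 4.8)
The plan is to exploit the fact that in a discrete space compactness coincides with finiteness, which collapses the semi-properness condition to a short counting argument about transversals of finite families.

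First I would verify that $\mathcal{M}$ is topological. Since the product of two discrete topologies is again discrete and every map out of a discrete space is continuous, both $\leftaction \from G \times M \to M$ and the action map $\alpha \from G \times M \to M \times M$ are automatically continuous. Hence it remains only to show that $\alpha$ is semi-proper.

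Next I would compute the fibres of $\alpha$. For a point $(m', m) \in M \times M$ we have $\alpha(g, n) = (m', m)$ if and only if $n = m$ and $g \leftaction m = m'$, so $\alpha^{-1}(m', m) = G_{m, m'} \times \set{m}$. Now let $K$ be a compact subset of $M \times M$; since $M \times M$ carries the discrete topology, $K$ is finite. Consequently the family $\mathfrak{L} = \set{\alpha^{-1}(k) \suchthat k \in K}$ is finite, having at most $\abs{K}$ members. Finally I would bound any transversal $T$ of $\mathfrak{L}$: by definition $T$ is the image of a surjection $f \from \mathfrak{L} \to T$, so $\abs{T} \le \abs{\mathfrak{L}} \le \abs{K} < \infty$. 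Thus $T$ is a finite, and therefore compact, subset of $G \times M$, so it is trivially contained in a compact subset of $G \times M$, namely itself. This establishes that $\alpha$ is semi-proper and hence that $\mathcal{M}$ is semi-proper.

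The step I expect to require the most care is the transversal bound: one must observe that finiteness of $T$ comes from the finiteness of the family $\mathfrak{L}$ rather than from the individual fibres $G_{m, m'} \times \set{m}$, which may well be infinite (for instance when a stabiliser is infinite). It is precisely the transversal formulation of semi-properness — selecting one representative per fibre — that makes the discrete case go through regardless of the sizes of the stabilisers and transporters, and this is the feature that distinguishes semi-properness from ordinary properness here.
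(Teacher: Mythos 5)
Your proposal is correct and follows essentially the same route as the paper: discreteness makes all maps continuous and reduces compactness to finiteness, so a compact $K$ yields a finite family of fibres and hence finite (thus compact) transversals. Your explicit computation of the fibres $\alpha^{-1}(m',m) = G_{m,m'} \times \set{m}$ and your remark that the individual fibres may be infinite are not needed for the argument but correctly pinpoint why semi-properness, unlike properness, survives infinite stabilisers here.
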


  \begin{proof}
    The product topology on $G \times M$ and the one on $M \times M$ are discrete. Hence, each subset of $G \times M$ and each of $M \times M$ is open. Thus, the map $\leftaction$ is continuous. Moreover, each subset of $G \times M$ and each of $M \times M$ is compact if and only if it is finite.

    Let $K$ be a finite subset of $M \times M$. Because the set $\set{\alpha^{-1}(k) \suchthat k \in K}$, where $\alpha$ is the action map of $\mathcal{M}$, is finite, so is each of its transversals. Thus, the map $\alpha$ is semi-proper. In conclusion, the group set $\mathcal{M}$ is semi-proper. \qed
  \end{proof}

  \begin{definition} 
    Let $\mathcal{R} = \ntuple{\ntuple{M, G, \leftaction}, \ntuple{m_0, \family{g_{m_0, m}}_{m \in M}}}$ be a topological or uniform cell space. Equip $G \quotient G_0$ with the topology or uniformity induced by $m_0 \rightsemiaction \blank$. 
  \end{definition}

  \begin{lemma}
  \label{lem:rightquotientaction-preserves-compacta} 
    Let $\mathcal{R} = \ntuple{\ntuple{M, G, \leftaction}, \ntuple{m_0, \family{g_{m_0, m}}_{m \in M}}}$ be a semi-proper cell space, let $K$ be a compact subset of $M$, and let $E$ be a compact subset of $G \quotient G_0$. The set $K \rightsemiaction E$ is included in a compact subset of $M$.
  \end{lemma}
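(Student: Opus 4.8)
The plan is to reduce the statement to the continuity of $\leftaction$ together with Lemma~\ref{lem:proper-compact-subset-of-group}. First I would identify $G \quotient G_0$ with $M$ by the map $m_0 \rightsemiaction \blank$. Because $G \quotient G_0$ carries the topology induced by $m_0 \rightsemiaction \blank$, this map is continuous, so the image $L = m_0 \rightsemiaction E$ is a compact subset of $M$. Using the identity $m \rightsemiaction \mathfrak{g} = g_{m_0, m} \leftaction (m_0 \rightsemiaction \mathfrak{g})$ recorded in Lemma~\ref{lem:identification-of-G-quotient-Gzero-with-M-by-right-semiaction}, I would rewrite
\begin{equation*}
  K \rightsemiaction E = \set{g_{m_0, m} \leftaction \ell \suchthat m \in K,\ \ell \in L}.
\end{equation*}

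Next I would control the coordinates $g_{m_0, m}$ as $m$ ranges over $K$. Since $g_{m_0, m} \leftaction m_0 = m$, the family $\family{g_{m_0, m}}_{m \in K}$ is a transversal of the set of transporters $\set{G_{m_0, m} \suchthat m \in K}$. The cell space $\mathcal{R}$ being semi-proper, its underlying left group set is semi-proper, so I may invoke Lemma~\ref{lem:proper-compact-subset-of-group} with the compact sets $K$ and $\set{m_0}$: the displayed transversal is then included in some compact subset $C$ of $G$. This is the crux of the argument and the only place where semi-properness is genuinely used; everything else is soft.

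Finally I would conclude by continuity of the action. From the rewriting above, together with $\set{g_{m_0, m} \suchthat m \in K} \subseteq C$, I obtain
\begin{equation*}
  K \rightsemiaction E \subseteq \set{g \leftaction \ell \suchthat g \in C,\ \ell \in L} = \leftaction(C \times L).
\end{equation*}
By Tychonoff's theorem the product $C \times L$ is compact, and since $\mathcal{M}$ is topological the map $\leftaction$ is continuous, whence $\leftaction(C \times L)$ is a compact subset of $M$ containing $K \rightsemiaction E$. The main obstacle is bounding the transporters $\set{g_{m_0, m} \suchthat m \in K}$ inside a compact set of symmetries, which Lemma~\ref{lem:proper-compact-subset-of-group} supplies from semi-properness; the remaining steps are routine applications of continuity and Tychonoff's theorem.
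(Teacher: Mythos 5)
Your proposal is correct and follows essentially the same route as the paper: rewrite $K \rightsemiaction E$ as $\set{g_{m_0, k} \suchthat k \in K} \leftaction (m_0 \rightsemiaction E)$, bound the coordinates via Lemma~\ref{lem:proper-compact-subset-of-group} applied to $K$ and $\set{m_0}$, and finish with Tychonoff and the continuity of $\leftaction$. The only difference is presentational: you make explicit that $m_0 \rightsemiaction E$ is compact because $G \quotient G_0$ carries the topology induced by $m_0 \rightsemiaction \blank$, a point the paper leaves implicit.
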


  \begin{proof}
    We have
    \begin{equation*}
      K \rightsemiaction E 
      = \bigcup_{k \in K} g_{m_0, k} \leftaction (m_0 \rightsemiaction E)
      = \set{g_{m_0, k} \suchthat k \in K} \leftaction (m_0 \rightsemiaction E).
    \end{equation*}
    Both, the singleton set $\set{m_0}$ and the set $K$ are compact. Hence, according to Lemma~\ref{lem:proper-compact-subset-of-group}, the transversal $\set{g_{m_0, k} \suchthat k \in K}$ of $\set{G_{m_0, k} \suchthat (m_0, k) \in \set{m_0} \times K}$ is included in a compact subset of $G$. Thus, the product $\set{g_{m_0, k} \suchthat k \in K} \times (m_0 \rightsemiaction E)$ is included in a compact subset of $G \times M$. Therefore, because $\leftaction$ is continuous, the image $\set{g_{m_0, k} \suchthat k \in K} \leftaction (m_0 \rightsemiaction E)$ is included in a compact subset of $M$. \qed
  \end{proof}

  \begin{definition}
    Let $M$ be a topological space and let $Q$ be a set.
    \begin{enumerate}
      \item The topology on $Q^M$ that has for a subbase (and base) the sets
            \begin{equation*}
              \mathfrak{E}(K,b) = \set{c \in Q^M \suchthat c\restriction_K = b},
              \text{ for } b \in Q^K \text{ and } K \subseteq M \text{ compact},
            \end{equation*}
            is called \define{topology of discrete convergence on compacta}.
      \item The uniformity on $Q^M$ that has for a subbase (and base) the sets
            \begin{equation*}
              \mathfrak{E}(K) = \set{(c,c') \in Q^M \times Q^M \suchthat c\restriction_K = c'\restriction_K},
              \text{ for } K \subseteq M \text{ compact},
            \end{equation*}
            is called \define{uniformity of discrete convergence on compacta}.
    \end{enumerate}
  \end{definition}

  \begin{remark}
  \label{rem:topology-and-uniformity-of-discrete-convergence-on-compacta}
    \begin{enumerate}
      \item \label{it:topology-and-uniformity-of-discrete-convergence-on-compacta:prodiscrete-uniformity}
            Let $M$ be equipped with the discrete topology. For each subset $A$ of $M$, the set $A$ is compact if and only if it is finite. Therefore, the topology and uniformity of discrete convergence on compacta on $Q^M$ are the prodiscrete topology and uniformity on $Q^M$ respectively. For definitions of the latter see Sects.~1.2 and~1.9 in \cite{ceccherini-silberstein:coornaert:2010}. The set $Q^M$ equipped wit the prodiscrete topology is Hausdorff (see Proposition~1.2.1 in \cite{ceccherini-silberstein:coornaert:2010}) and if $Q$ is finite then it is compact (see the first paragraph in Sect.~1.8 in \cite{ceccherini-silberstein:coornaert:2010}).
      \item \label{it:compact-cell-space-uniformity-of-discrete-is-discrete}
            If the topological space $M$ is compact, the topology and uniformity of discrete convergence on compacta on $Q^M$ is the discrete topology and uniformity on $Q^M$ respectively.
      \item \label{it:topology-induced-by-uniformity-of-discrete-convergence-on-compacta}
            The topology induced by the uniformity of discrete convergence on compacta on $Q^M$ is the topology of discrete convergence on compacta on $Q^M$.
    \end{enumerate}
  \end{remark}

  \begin{remark} 
  \label{rem:discrete-topological-cell-space}
    Let $\mathcal{R} = \ntuple{\ntuple{M, G, \leftaction}, \ntuple{m_0, \family{g_{m_0, m}}_{m \in M}}}$ be a cell space and let $Q$ be a finite set. Equip $M$ and $G$ with their respective discrete topologies, and equip $Q^M$ with the prodiscrete topology. According to Lemma~\ref{lem:discrete-semi-proper-cell-space}, the cell space $\mathcal{R}$ is semi-proper. Because the topology on $G \quotient G_0$ is discrete, each subset $E$ of $G \quotient G_0$ is compact if and only if it is finite. Because $Q^M$ is compact, each map $\Delta \from Q^M \to Q^M$ is uniformly continuous if and only if it is continuous. And, because $Q^M$ is Hausdorff, each map $\Delta \from Q^M \to Q^M$, is a uniform isomorphism if and only if it is continuous and bijective.
  \end{remark}

  \begin{main-theorem}[Uniform Variant; Morton Landers Curtis, Gustav Arnold Hedlund, and Roger Conant Lyndon, 1969]
  \label{thm:uniform-curtis-hedlund-lyndon}
    Let $\mathcal{R} = \ntuple{\ntuple{M, G, \leftaction}, \ntuple{m_0, \family{g_{m_0, m}}_{m \in M}}}$ be a semi-proper cell space, let $Q$ be a set, let $\Delta$ be a map from $Q^M$ to $Q^M$, let $Q^M$ be equipped with the uniformity of discrete convergence on compacta, and let $H$ be a subgroup of $G$ such that $\set{g_{m_0, m} \suchthat m \in M} \subseteq H$. The following statements are equivalent:
    \begin{enumerate}
      \item \label{it:uniform-curtis-hedlund-lyndon:global-transition-function}
            The map $\Delta$ is the global transition function of a semi-cellular automaton with $\bullet_{H_0}$-invariant local transition function and compact essential neighbourhood.
      \item \label{it:uniform-curtis-hedlund-lyndon:equivariant-and-continuous}
            The map $\Delta$ is $\inducedleftaction_H$-equivariant and uniformly continuous.
    \end{enumerate}
  \end{main-theorem}

  \begin{proof} 
    First, let $\Delta$ be the global transition function of a semi-cellular automaton $\mathcal{C} = \ntuple{\mathcal{R}, Q, N, \delta}$ such that $\delta$ is $\bullet_{H_0}$-invariant and such that there is a compact essential neighbourhood $E$ of $\mathcal{C}$. Then, according to Item~\ref{it:local-invariance-versus-global-equivariance:equivalence} of Theorem~\ref{thm:local-invariance-versus-global-equivariance}, the map $\Delta$ is $\inducedleftaction_H$-equivariant. Moreover, let $K$ be a compact subset of $M$. According to Lemma~\ref{lem:rightquotientaction-preserves-compacta}, the set $K \rightsemiaction E$ is included in a compact subset $L$ of $M$. For each $c \in Q^M$ and each $c' \in Q^M$, if $c\restriction_{K \rightsemiaction E} = c'\restriction_{K \rightsemiaction E}$, then $\Delta(c)\restriction_K = \Delta(c')\restriction_K$, in particular, if $c\restriction_L = c'\restriction_L$, then $\Delta(c)\restriction_K = \Delta(c')\restriction_K$. Thus,
    \begin{equation*}
      (\Delta \times \Delta)(\mathfrak{E}(L)) \subseteq \mathfrak{E}(K).
    \end{equation*}
    Because the sets $\mathfrak{E}(K)$, for $K \subseteq M$ compact, constitute a base of the uniformity on $Q^M$, the global transition function $\Delta$ is uniformly continuous.

    Secondly, let $\Delta$ be as in Item~\ref{it:uniform-curtis-hedlund-lyndon:equivariant-and-continuous}. Because $\Delta$ is uniformly continuous, there is a compact subset $E_0$ of $M$ such that
    \begin{equation*}
      (\Delta \times \Delta)(\mathfrak{E}(E_0)) \subseteq \mathfrak{E}(\set{m_0}).
    \end{equation*}
    Therefore, for each $c \in Q^M$, the state $\Delta(c)(m_0)$ depends at most on $c\restriction_{E_0}$. The subset $E = (m_0 \rightsemiaction \blank)^{-1}(E_0) = \set{G_{m_0, m} \suchthat m \in E_0}$ of $G \quotient G_0$ is compact. Let $N$ be the set $G_0 \cdot E$. Then, $G_0 \cdot N \subseteq N$. And, because $E_0 \subseteq m_0 \rightsemiaction N$, for each $c \in Q^M$, the state $\Delta(c)(m_0)$ depends at most on $c\restriction_{m_0 \rightsemiaction N}$. Hence, there is a map $\delta \from Q^N \to Q$ such that
    \begin{equation*}
      \ForEach c \in Q^M \Holds \Delta(c)(m_0) = \delta(n \mapsto c(m_0 \rightsemiaction n)).
    \end{equation*}
    The quadruple $\mathcal{C} = \ntuple{\mathcal{R}, Q, N, \delta}$ is a semi-cellular automaton. Conclude with Theorem~\ref{thm:determination-of-cellular-automata-by-behaviour-at-origin} that $\delta$ is $\bullet_{H_0}$-invariant and that $\Delta$ is the global transition function of $\mathcal{C}$. \qed
  \end{proof}

  \begin{corollary}[Topological Variant; Morton Landers Curtis, Gustav Arnold Hedlund, and Roger Conant Lyndon, 1969]
  \label{cor:curtis-hedlund-lyndon}
    Let $\mathcal{R} = \ntuple{\ntuple{M, G, \leftaction}, \ntuple{m_0, \family{g_{m_0, m}}_{m \in M}}}$ be a cell space, let $Q$ be a finite set, let $\Delta$ be a map from $Q^M$ to $Q^M$, let $Q^M$ be equipped with the prodiscrete topology, and let $H$ be a subgroup of $G$ such that $\set{g_{m_0, m} \suchthat m \in M} \subseteq H$. The following statements are equivalent:
    \begin{enumerate}
      \item \label{it:curtis-hedlund-lyndon:global-transition-function}
            The map $\Delta$ is the global transition function of a semi-cellular automaton with $\bullet_{H_0}$-invariant local transition function and finite essential neighbourhood.
      \item \label{it:curtis-hedlund-lyndon:equivariant-and-continuous}
            The map $\Delta$ is $\inducedleftaction_H$-equivariant and continuous.
    \end{enumerate}
  \end{corollary}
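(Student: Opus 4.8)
The plan is to deduce this topological variant directly from the uniform Main Theorem~\ref{thm:uniform-curtis-hedlund-lyndon} by endowing the cell space with discrete topologies and translating every notion in the Main Theorem into the prodiscrete setting, using the bookkeeping already assembled in Remarks~\ref{rem:topology-and-uniformity-of-discrete-convergence-on-compacta} and~\ref{rem:discrete-topological-cell-space}. Since the corollary's hypotheses are purely combinatorial (no topology on $M$ or $G$ is presupposed), I am free to make this choice.

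First I would equip $M$ and $G$ with their respective discrete topologies. By Lemma~\ref{lem:discrete-semi-proper-cell-space} the cell space $\mathcal{R}$ is then semi-proper, so the Main Theorem applies verbatim to $\mathcal{R}$ with $Q^M$ carrying the uniformity of discrete convergence on compacta. Next I would match the two structures. By Item~\ref{it:topology-and-uniformity-of-discrete-convergence-on-compacta:prodiscrete-uniformity} of Remark~\ref{rem:topology-and-uniformity-of-discrete-convergence-on-compacta}, because a subset of the discrete space $M$ is compact if and only if it is finite, the topology and uniformity of discrete convergence on compacta on $Q^M$ are precisely the prodiscrete topology and uniformity; and by Item~\ref{it:topology-induced-by-uniformity-of-discrete-convergence-on-compacta} the former uniformity induces the latter topology. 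The same reasoning applied to $G \quotient G_0$, whose topology induced by $m_0 \rightsemiaction \blank$ is discrete, shows that a subset there is compact if and only if it is finite; hence a \emph{compact} essential neighbourhood in the Main Theorem is exactly a \emph{finite} essential neighbourhood in the corollary.

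It remains to collapse uniform continuity to continuity. Since $Q$ is finite, $Q^M$ is compact for the prodiscrete topology (again Item~\ref{it:topology-and-uniformity-of-discrete-convergence-on-compacta:prodiscrete-uniformity}), so by Remark~\ref{rem:discrete-topological-cell-space} a map $\Delta \from Q^M \to Q^M$ is uniformly continuous if and only if it is continuous. With these identifications, Item~\ref{it:curtis-hedlund-lyndon:global-transition-function} of the corollary coincides with Item~\ref{it:uniform-curtis-hedlund-lyndon:global-transition-function} of the Main Theorem (finite $\leftrightarrow$ compact essential neighbourhood), and Item~\ref{it:curtis-hedlund-lyndon:equivariant-and-continuous} coincides with Item~\ref{it:uniform-curtis-hedlund-lyndon:equivariant-and-continuous} (continuous $\leftrightarrow$ uniformly continuous, the $\inducedleftaction_H$-equivariance being unchanged). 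The desired equivalence is then immediate from the Main Theorem.

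I do not expect a genuine obstacle here; the entire content is the careful alignment of the discrete-convergence-on-compacta structures with the prodiscrete ones. The only point requiring care is to confirm that the uniformity of discrete convergence on compacta really induces the prodiscrete topology, so that ``uniformly continuous for that uniformity'' and ``continuous for the prodiscrete topology'' are comparable statements about the same map, and that the compactness of $Q^M$ is what forces these two notions to agree. Both facts are supplied by the cited remarks, so after fixing the discrete topologies the argument is a direct invocation of Main Theorem~\ref{thm:uniform-curtis-hedlund-lyndon}.
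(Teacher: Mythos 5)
Your proposal is correct and follows essentially the same route as the paper: the paper's proof is simply the one-line appeal to Remark~\ref{rem:discrete-topological-cell-space} together with Main Theorem~\ref{thm:uniform-curtis-hedlund-lyndon}, and your argument is an explicit unpacking of exactly the identifications that remark records (discrete topologies, semi-properness via Lemma~\ref{lem:discrete-semi-proper-cell-space}, compact $=$ finite in $G \quotient G_0$, and uniform continuity $=$ continuity on the compact space $Q^M$).
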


  \begin{proof}
    With Remark~\ref{rem:discrete-topological-cell-space} this follows directly from Main Theorem~\ref{thm:uniform-curtis-hedlund-lyndon}. \qed
%
%
  \end{proof}

  \begin{remark}
    In the case that $M = G$ and $\leftaction$ is the group multiplication of $G$, Main Theorem~\ref{thm:uniform-curtis-hedlund-lyndon} is Theorem~1.9.1 in \cite{ceccherini-silberstein:coornaert:2010} and Corollary~\ref{cor:curtis-hedlund-lyndon} is Theorem~1.8.1 in \cite{ceccherini-silberstein:coornaert:2010}.
  \end{remark}

  \section{Invertibility}
  \label{sec:invertibility}

  \begin{definition}
    Let $\mathcal{C} = \ntuple{\mathcal{R}, Q, N, \delta}$ be a semi-cellular automaton. It is called \define{invertible} if and only if there is a semi-cellular automaton $\mathcal{C}'$, called \define{inverse to $\mathcal{C}$}, such that the global transition functions of $\mathcal{C}$ and $\mathcal{C}'$ are inverse to each other.
  \end{definition}

  \begin{theorem}
  \label{thm:invertible-uniform-curtis-hedlund-lyndon}
    Let $\mathcal{R} = \ntuple{\ntuple{M, G, \leftaction}, \ntuple{m_0, \family{g_{m_0, m}}_{m \in M}}}$ be a semi-proper cell space, let $Q$ be a set, let $\Delta$ be a map from $Q^M$ to $Q^M$, let $Q^M$ be equipped with the uniformity of discrete convergence on compacta, and let $H$ be a subgroup of $G$ such that $\set{g_{m_0, m} \suchthat m \in M} \subseteq H$. The following statements are equivalent:
    \begin{enumerate}
      \item \label{it:invertible-curtis-hedlund-lyndon:automaton}
            The map $\Delta$ is the global transition function of an invertible semi-cellular automaton $\mathcal{C}$ that has an inverse $\mathcal{C}'$ such that the local transition functions of $\mathcal{C}$ and $\mathcal{C}'$ are $\bullet_{H_0}$-invariant, and $\mathcal{C}$ and $\mathcal{C}'$ have compact essential neighbourhoods.
      \item \label{it:invertible-curtis-hedlund-lyndon:equivariant}
            The map $\Delta$ is an $\inducedleftaction_H$-equivariant uniform isomorphism.
    \end{enumerate}
  \end{theorem}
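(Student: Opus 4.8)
The plan is to obtain this theorem as a consequence of Main Theorem~\ref{thm:uniform-curtis-hedlund-lyndon}, applied once to $\Delta$ and once to its inverse, supplemented by Lemma~\ref{lem:inverse-is-equivariant}. The key observation is that a uniform isomorphism is by definition a bijective, uniformly continuous map whose inverse is also uniformly continuous, so the two continuity hypotheses needed to invoke the Main Theorem for $\Delta$ and for $\Delta^{-1}$ come for free once we know $\Delta$ is such an isomorphism, and conversely.

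For the direction \ref{it:invertible-curtis-hedlund-lyndon:automaton} $\Rightarrow$ \ref{it:invertible-curtis-hedlund-lyndon:equivariant}, I would start from an invertible semi-cellular automaton $\mathcal{C}$ with inverse $\mathcal{C}'$, whose global transition functions $\Delta$ and $\Delta'$ are inverse to each other; in particular $\Delta' = \Delta^{-1}$ and $\Delta$ is bijective. Since both local transition functions are $\bullet_{H_0}$-invariant and both essential neighbourhoods are compact, Main Theorem~\ref{thm:uniform-curtis-hedlund-lyndon} applies to each of $\mathcal{C}$ and $\mathcal{C}'$ and shows that $\Delta$ and $\Delta^{-1}$ are both $\inducedleftaction_H$-equivariant and uniformly continuous. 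A bijective, uniformly continuous map with uniformly continuous inverse is a uniform isomorphism, so $\Delta$ is an $\inducedleftaction_H$-equivariant uniform isomorphism.

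For the direction \ref{it:invertible-curtis-hedlund-lyndon:equivariant} $\Rightarrow$ \ref{it:invertible-curtis-hedlund-lyndon:automaton}, I would argue symmetrically. Assuming $\Delta$ is an $\inducedleftaction_H$-equivariant uniform isomorphism, it is in particular $\inducedleftaction_H$-equivariant and uniformly continuous, so Main Theorem~\ref{thm:uniform-curtis-hedlund-lyndon} realises $\Delta$ as the global transition function of a semi-cellular automaton $\mathcal{C}$ with $\bullet_{H_0}$-invariant local transition function and compact essential neighbourhood. It remains to treat $\Delta^{-1}$: since $\Delta$ is equivariant and bijective, Lemma~\ref{lem:inverse-is-equivariant} (applied with the group $H$ acting on $Q^M$ by $\inducedleftaction_H$ on both domain and codomain) shows $\Delta^{-1}$ is $\inducedleftaction_H$-equivariant, and since $\Delta$ is a uniform isomorphism, $\Delta^{-1}$ is uniformly continuous. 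Hence Main Theorem~\ref{thm:uniform-curtis-hedlund-lyndon} again realises $\Delta^{-1}$ as the global transition function of a semi-cellular automaton $\mathcal{C}'$ with $\bullet_{H_0}$-invariant local transition function and compact essential neighbourhood. Since the global transition functions $\Delta$ and $\Delta^{-1}$ of $\mathcal{C}$ and $\mathcal{C}'$ are inverse to each other, $\mathcal{C}$ is invertible with inverse $\mathcal{C}'$, which yields \ref{it:invertible-curtis-hedlund-lyndon:automaton}.

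Because the Main Theorem does all the heavy lifting, I do not expect a deep obstacle here; the only point requiring care is the $\inducedleftaction_H$-equivariance of $\Delta^{-1}$, which must be secured \emph{before} the Main Theorem can be applied to it. I expect that to be the one step that is not purely formal, and it is exactly what Lemma~\ref{lem:inverse-is-equivariant} provides, once one notes that the action on the domain and on the codomain is the same action $\inducedleftaction_H$, so that the $(\inducedleftaction_H, \inducedleftaction_H)$-equivariance of the inverse is precisely $\inducedleftaction_H$-equivariance.
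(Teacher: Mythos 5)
Your proposal is correct and is precisely the argument the paper intends: its proof of this theorem is the single line ``With Lemma~\ref{lem:inverse-is-equivariant} this follows directly from Main Theorem~\ref{thm:uniform-curtis-hedlund-lyndon}'', and you have simply spelled out the two applications of the Main Theorem (to $\Delta$ and to $\Delta^{-1}$) together with the use of Lemma~\ref{lem:inverse-is-equivariant} to secure the equivariance of the inverse. No gaps.
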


  \begin{proof}
    With Lemma~\ref{lem:inverse-is-equivariant} this follows directly from Main Theorem~\ref{thm:uniform-curtis-hedlund-lyndon}.
%
  \end{proof}

  \begin{corollary}
  \label{cor:invertible-curtis-hedlund-lyndon}
    Let $\mathcal{R} = \ntuple{\ntuple{M, G, \leftaction}, \ntuple{m_0, \family{g_{m_0, m}}_{m \in M}}}$ be a cell space, let $Q$ be a finite set, let $\Delta$ be a map from $Q^M$ to $Q^M$, let $Q^M$ be equipped with the prodiscrete topology, and let $H$ be a subgroup of $G$ such that $\set{g_{m_0, m} \suchthat m \in M} \subseteq H$. The following statements are equivalent:
    \begin{enumerate}
      \item \label{it:cor-invertible-curtis-hedlund-lyndon:automaton}
            The map $\Delta$ is the global transition function of an invertible semi-cellular automaton $\mathcal{C}$ that has an inverse $\mathcal{C}'$ such that the local transition functions of $\mathcal{C}$ and $\mathcal{C}'$ are $\bullet_{H_0}$-invariant, and $\mathcal{C}$ and $\mathcal{C}'$ have finite essential neighbourhoods.
      \item \label{it:cor-invertible-curtis-hedlund-lyndon:equivariant}
            The map $\Delta$ is $\inducedleftaction_H$-equivariant, continuous, and bijective.
    \end{enumerate}
  \end{corollary}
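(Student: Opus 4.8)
The plan is to obtain this topological invertible variant as a direct specialisation of the uniform invertible variant, Theorem~\ref{thm:invertible-uniform-curtis-hedlund-lyndon}, to the discrete setting, in exactly the same way that Corollary~\ref{cor:curtis-hedlund-lyndon} is deduced from Main Theorem~\ref{thm:uniform-curtis-hedlund-lyndon}. The translation between the uniform and the purely topological formulation is entirely furnished by Remark~\ref{rem:discrete-topological-cell-space}, so the whole argument reduces to checking that each hypothesis and each conclusion of Theorem~\ref{thm:invertible-uniform-curtis-hedlund-lyndon} turns into the corresponding clause of the corollary once $M$ and $G$ carry their discrete topologies and $Q$ is finite.

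First I would equip $M$ and $G$ with their discrete topologies. By Lemma~\ref{lem:discrete-semi-proper-cell-space} the cell space $\mathcal{R}$ is then semi-proper, so Theorem~\ref{thm:invertible-uniform-curtis-hedlund-lyndon} applies. Since $Q$ is finite, Remark~\ref{rem:discrete-topological-cell-space} identifies the prodiscrete topology on $Q^M$ with the topology of discrete convergence on compacta, which is precisely the topology induced by the uniformity of discrete convergence on compacta that underlies Theorem~\ref{thm:invertible-uniform-curtis-hedlund-lyndon}. Moreover, because the induced topology on $G \quotient G_0$ is discrete, a subset is compact if and only if it is finite; hence \enquote{compact essential neighbourhoods} in Item~\ref{it:invertible-curtis-hedlund-lyndon:automaton} of Theorem~\ref{thm:invertible-uniform-curtis-hedlund-lyndon} becomes \enquote{finite essential neighbourhoods} of Item~\ref{it:cor-invertible-curtis-hedlund-lyndon:automaton}, while the remaining requirements on $\mathcal{C}$ and $\mathcal{C}'$ are verbatim the same. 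This settles the equivalence of the two automaton-side statements.

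It remains to translate the analytic side. Because $Q$ is finite, the space $Q^M$ is compact, so by Remark~\ref{rem:discrete-topological-cell-space} uniform continuity and continuity coincide, and, since $Q^M$ is additionally Hausdorff, a self-map of $Q^M$ is a uniform isomorphism if and only if it is continuous and bijective. Consequently the property \enquote{$\inducedleftaction_H$-equivariant uniform isomorphism} of Item~\ref{it:invertible-curtis-hedlund-lyndon:equivariant} of Theorem~\ref{thm:invertible-uniform-curtis-hedlund-lyndon} is equivalent to \enquote{$\inducedleftaction_H$-equivariant, continuous, and bijective} of Item~\ref{it:cor-invertible-curtis-hedlund-lyndon:equivariant}. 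Substituting both translations into the already established equivalence of Theorem~\ref{thm:invertible-uniform-curtis-hedlund-lyndon} yields the asserted equivalence.

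I do not expect a genuine obstacle here: the corollary is a bookkeeping specialisation. The only point deserving a moment's care is the claim, packaged in Remark~\ref{rem:discrete-topological-cell-space}, that a continuous bijection of $Q^M$ is automatically a uniform isomorphism; this rests on the standard fact that a continuous bijection from a compact space onto a Hausdorff space is a homeomorphism, together with the fact that a continuous map on a compact uniform space is uniformly continuous, both applied to $\Delta$ and to its inverse. Everything else is a matter of matching the discretised hypotheses and conclusions of Theorem~\ref{thm:invertible-uniform-curtis-hedlund-lyndon} to the statement of the corollary.
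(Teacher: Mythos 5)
Your proposal is correct and follows exactly the paper's route: the paper's proof is the one-line "With Remark~\ref{rem:discrete-topological-cell-space} this follows directly from Theorem~\ref{thm:invertible-uniform-curtis-hedlund-lyndon}", and your write-up simply spells out the bookkeeping (discrete topologies, Lemma~\ref{lem:discrete-semi-proper-cell-space}, compact~$=$~finite, and the compact-Hausdorff identification of uniform isomorphisms with continuous bijections) that the remark packages.
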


  \begin{proof}
    With Remark~\ref{rem:discrete-topological-cell-space} this follows directly from Theorem~\ref{thm:invertible-uniform-curtis-hedlund-lyndon}. \qed
  \end{proof}


  \begin{corollary}
    Let $\mathcal{R} = \ntuple{\ntuple{M, G, \leftaction}, \ntuple{m_0, \family{g_{m_0, m}}_{m \in M}}}$ be a cell space, let $H$ be a subgroup of $G$ such that $\set{g_{m_0, m} \suchthat m \in M} \subseteq H$. Furthermore, let $\mathcal{C} = \ntuple{\mathcal{R}, Q, N, \delta}$ be a semi-cellular automaton with finite set of states, finite essential neighbourhood, and $\bullet_{H_0}$-invariant local transition function. The automaton $\mathcal{C}$ is invertible if and only if its global transition function is bijective.
  \end{corollary}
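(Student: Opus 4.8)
The plan is to deduce both directions from the already established topological Curtis-Hedlund-Lyndon theorem (Corollary~\ref{cor:curtis-hedlund-lyndon}) together with its invertible refinement (Corollary~\ref{cor:invertible-curtis-hedlund-lyndon}), after equipping $M$ and $G$ with their discrete topologies and $Q^M$ with the prodiscrete topology, and noting, via Remark~\ref{rem:discrete-topological-cell-space}, that the cell space is then semi-proper and that every finite subset of $M$ is compact. Throughout, let $\Delta$ denote the global transition function of $\mathcal{C}$.

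For the forward implication I would argue purely from the definition of invertibility: if $\mathcal{C}$ is invertible, there is a semi-cellular automaton $\mathcal{C}'$ whose global transition function is a two-sided inverse of $\Delta$. A map that admits a two-sided inverse is bijective, so $\Delta$ is bijective; no further structure is needed here.

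For the converse, suppose $\Delta$ is bijective. Since $Q$ is finite, $\mathcal{C}$ has a finite essential neighbourhood, and $\delta$ is $\bullet_{H_0}$-invariant, Corollary~\ref{cor:curtis-hedlund-lyndon} applies and yields that $\Delta$ is $\inducedleftaction_H$-equivariant and continuous. Combining this with the assumed bijectivity, statement~\ref{it:cor-invertible-curtis-hedlund-lyndon:equivariant} of Corollary~\ref{cor:invertible-curtis-hedlund-lyndon} holds, and the equivalence there gives statement~\ref{it:cor-invertible-curtis-hedlund-lyndon:automaton}: the map $\Delta$ is the global transition function of an invertible semi-cellular automaton possessing an inverse whose local transition function is $\bullet_{H_0}$-invariant and whose essential neighbourhood is finite. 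Since $\Delta$ is by definition the global transition function of $\mathcal{C}$, this inverse witnesses that $\mathcal{C}$ itself is invertible.

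I expect the only delicate point to be the last sentence: one must be careful that invertibility of a semi-cellular automaton is, by definition, a property solely of its global transition function — namely the existence of a semi-cellular automaton whose global transition function inverts that of $\mathcal{C}$ — so that exhibiting such an automaton for the map $\Delta$ is exactly what is required. All the genuine work, producing the inverse automaton with finite essential neighbourhood and $\bullet_{H_0}$-invariant local rule, is already packaged inside Corollary~\ref{cor:invertible-curtis-hedlund-lyndon}, so the remaining argument is essentially bookkeeping.
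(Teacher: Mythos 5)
Your proposal is correct and follows essentially the same route as the paper, which likewise deduces the statement from Corollary~\ref{cor:curtis-hedlund-lyndon} and Corollary~\ref{cor:invertible-curtis-hedlund-lyndon} (the paper additionally cites Item~\ref{it:local-invariance-versus-global-equivariance:local-to-global} of Theorem~\ref{thm:local-invariance-versus-global-equivariance} for the equivariance, which you obtain instead as part of the implication in Corollary~\ref{cor:curtis-hedlund-lyndon}). Your explicit remark that the inverse automaton produced by Corollary~\ref{cor:invertible-curtis-hedlund-lyndon} witnesses invertibility of $\mathcal{C}$ itself, because invertibility depends only on the global transition function, is exactly the bookkeeping the paper leaves implicit.
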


  \begin{proof}
    With Item~\ref{it:local-invariance-versus-global-equivariance:local-to-global} of Theorem~\ref{thm:local-invariance-versus-global-equivariance} and Corollary~\ref{cor:curtis-hedlund-lyndon} this follows directly from Corollary~\ref{cor:invertible-curtis-hedlund-lyndon}. \qed
  \end{proof}

  \newpage

  \appendix



  \section{Uniformity}
  \label{apx:uniformities}

  The theory of uniformities as presented here may be found in more detail in Appendix~B in the monograph \enquote{Cellular Automata and Groups}\cite{ceccherini-silberstein:coornaert:2010}.

  \begin{definition}
    Let $X$ be a set. The diagonal $\set{(x,x) \suchthat x \in X}$ in $X \times X$ is denoted by $\Delta_X$.
  \end{definition}

  \begin{definition}
    Let $X$ be a set, let $R$ be a subset of $X \times X$, and let $y$ be an element of $X$. The $y$-th column $\set{x \in X \suchthat (x,y) \in R}$ in $R$ is denoted by $R[y]$.
  \end{definition}

  \begin{definition}
    Let $X$ be a set and let $R$ be a subset of $X \times X$. The inverse $\set{(y,x) \suchthat (x,y) \in R}$ of $R$ is denoted by $R^{-1}$. The set $R$ is called \define{symmetric} if and only if it is self-inverse, that is, $R^{-1} = R$.
  \end{definition}

  \begin{definition}
    Let $X$ be a set, and let $R$ and $R'$ be two subsets of $X \times X$. The composite $\set{(x,z) \suchthat \Exists y \in X \SuchThat (x,y) \in R \wedge (y,z) \in R'}$ is denoted by $R \circ R'$.
  \end{definition}

  \begin{definition}
    Let $X$ be a set and let $\mathcal{U}$ be a set of subsets of $X \times X$. The set $\mathcal{U}$ is called \define{uniformity on $X$} if and only if 
    \begin{gather*}
      \mathcal{U} \neq \emptyset,\\
      \ForEach E \in \mathcal{U} \Holds \Delta_X \subseteq E,\\
      \ForEach E \in \mathcal{U} \ForEach E' \subseteq X \times X \Holds (E \subseteq E' \implies E' \in \mathcal{U}),\\
      \ForEach E \in \mathcal{U} \ForEach E' \in \mathcal{U} \Holds E \cap E' \in \mathcal{U},\\
      \ForEach E \in \mathcal{U} \Holds E^{-1} \in \mathcal{U},\\
      \ForEach E \in \mathcal{U} \Exists E' \in \mathcal{U} \SuchThat E' \circ E' \subseteq E.
    \end{gather*}
  \end{definition}

  \begin{definition}
    Let $X$ be a set and let $\mathcal{U}$ be a uniformity on $X$. The tuple $(X, \mathcal{U})$ is called \define{uniform space} and each element $E \in \mathcal{U}$ is called \define{entourage of $X$}.
  \end{definition}

  \begin{example}
    Let $X$ be a set and let $\mathcal{U}$ be the set of all subsets of $X \times X$ that contain $\Delta_X$. Then, $\mathcal{U}$ is the largest uniformity on $X$. Itself as well as the uniform space $(X, \mathcal{U})$ are called \define{uniformly discrete}. 
  \end{example}

  \begin{definition}
    Let $(X, \mathcal{U})$ be a uniform space. The set
    \begin{equation*}
      \set{O \subseteq X \suchthat \ForEach x \in O \Exists E \in \mathcal{U} \SuchThat E[x] = O}
    \end{equation*}
    is a topology on $X$ and called \define{induced by $\mathcal{U}$}.
  \end{definition}

  \begin{example}
    Let $(X, \mathcal{U})$ be a discrete uniform space. The topology induced by $\mathcal{U}$ is the discrete topology on $X$.
  \end{example}

  \begin{definition}
    Let $(X, \mathcal{U})$ be a uniform space and let $\mathcal{B}$ be a subset of $\mathcal{U}$.
    \begin{enumerate}
      \item The set $\mathcal{B}$ is called \define{base of $\mathcal{U}$} if and only if
            \begin{equation*}
              \ForEach E \in \mathcal{U} \Exists B \in \mathcal{B} \SuchThat B \subseteq E.
            \end{equation*}
      \item The set $\mathcal{B}$ is called \define{subbase of $\mathcal{U}$} if and only if the set
            \begin{equation*}
              \set{\bigcap_{i = 1}^n B_i \suchthat B_i \in \mathcal{B}, i \in \set{1,2,\dotsc,n}, n \in \naturals}
            \end{equation*}
            is a base of $\mathcal{U}$.
    \end{enumerate}
  \end{definition}

  \begin{lemma}
    Let $X$ be a set and let $\mathcal{B}$ be a set of subsets of $X \times X$.
    \begin{enumerate}
      \item The set $\mathcal{B}$ is a base of a uniformity on $X$ if and only if
            \begin{gather*}
              \mathcal{B} \neq \emptyset,\\
              \ForEach B \in \mathcal{B} \Holds \Delta_X \subseteq B,\\
              \ForEach B \in \mathcal{B} \ForEach B' \in \mathcal{B} \Exists B'' \in \mathcal{B} \SuchThat B'' \subseteq B \cap B',\\
              \ForEach B \in \mathcal{B} \Exists B' \in \mathcal{B} \SuchThat B' \subseteq B^{-1},\\
              \ForEach B \in \mathcal{B} \Exists B' \in \mathcal{B} \SuchThat B' \circ B' \subseteq B.
            \end{gather*}
      \item The set $\mathcal{B}$ is a subbase of a uniformity on $X$ if and only if
            \begin{gather*}
              \mathcal{B} \neq \emptyset,\\
              \ForEach B \in \mathcal{B} \Holds \Delta_X \subseteq B,\\
              \ForEach B \in \mathcal{B} \Exists B' \in \mathcal{B} \SuchThat B' \subseteq B^{-1},\\
              \ForEach B \in \mathcal{B} \Exists B' \in \mathcal{B} \SuchThat B' \circ B' \subseteq B.
            \end{gather*}
    \end{enumerate}
  \end{lemma}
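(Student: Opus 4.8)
The plan is to make the base characterisation the substantive case and to derive the subbase characterisation from it by passing to finite intersections. For the base characterisation I would argue the two implications separately. Necessity is the easier one: if $\mathcal{B}$ is a base of a uniformity $\mathcal{U}$, then $\mathcal{B} \subseteq \mathcal{U}$, so $\mathcal{B}$ is nonempty (as $\mathcal{U}$ is) and every member contains $\Delta_X$. The three remaining conditions each come from applying the base-approximation property to an entourage produced by a uniformity axiom: to $B \cap B'$ for the intersection condition, to $B^{-1}$ for the inverse condition, and to a half-size entourage $E$ with $E \circ E \subseteq B$ for the composition condition, where one first approximates $E$ by some $B' \in \mathcal{B}$ and then uses $B' \circ B' \subseteq E \circ E \subseteq B$.

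For sufficiency I would exhibit the uniformity explicitly as the upward closure
\begin{equation*}
  \mathcal{U} = \set{E \subseteq X \times X \suchthat \Exists B \in \mathcal{B} \SuchThat B \subseteq E}
\end{equation*}
and check the six uniformity axioms, each reducing to a single hypothesis. Nonemptiness, the diagonal inclusion, and upward closure are immediate from the definition; closure under finite intersection uses the intersection hypothesis; closure under taking inverses uses the inverse hypothesis applied to a $B \subseteq E$; and for each $E \in \mathcal{U}$ a half-size entourage is obtained by taking $B \subseteq E$, choosing $B'$ with $B' \circ B' \subseteq B$, and using $B'$ itself as the smaller entourage. That $\mathcal{B}$ is a base of this $\mathcal{U}$ is then immediate from the definition of $\mathcal{U}$.

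For the subbase characterisation I would apply the base characterisation to the family $\mathcal{B}^{\cap}$ of all finite intersections of members of $\mathcal{B}$, since by definition $\mathcal{B}$ is a subbase precisely when $\mathcal{B}^{\cap}$ is a base. The heart of this step is that the four subbase conditions on $\mathcal{B}$ match the five base conditions on $\mathcal{B}^{\cap}$: the intersection condition for $\mathcal{B}^{\cap}$ is automatic, because an intersection of two finite intersections is again one, and this is exactly why it is absent from the subbase list; the inverse and composition conditions transfer through the identities $(\bigcap_i B_i)^{-1} = \bigcap_i B_i^{-1}$ and $(\bigcap_i B_i) \circ (\bigcap_i B_i) \subseteq \bigcap_i (B_i \circ B_i)$, by picking a witness for each factor $B_i$ from the corresponding subbase hypothesis and intersecting those witnesses to land back in $\mathcal{B}^{\cap}$.

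I expect the composition axiom to be the main obstacle throughout. In the construction for the base characterisation it is the only axiom that is not purely set-theoretic bookkeeping, and in the reduction for the subbase characterisation the crucial verification is that the inclusion $(\bigcap_i B_i) \circ (\bigcap_i B_i) \subseteq \bigcap_i (B_i \circ B_i)$ really does let the single-factor composition hypotheses assemble into one for $\mathcal{B}^{\cap}$; the inverse and intersection conditions are routine by comparison. The delicate direction overall is thus the transfer of the composition and inverse conditions between $\mathcal{B}$ and its family of finite intersections.
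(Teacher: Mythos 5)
The paper itself states this lemma without proof (it is background material deferred to Appendix~B of \cite{ceccherini-silberstein:coornaert:2010}), so your argument has to stand on its own. Your treatment of the base characterisation is correct in both directions: necessity by approximating the entourages $B \cap B'$, $B^{-1}$ and a half-size entourage of $B$ by members of $\mathcal{B}$, and sufficiency by exhibiting the uniformity as the upward closure of $\mathcal{B}$. Your derivation of the \emph{sufficiency} half of the subbase characterisation, transferring the inverse and composition conditions to the family $\mathcal{B}^{\cap}$ of finite intersections via $(\bigcap_i B_i)^{-1} = \bigcap_i B_i^{-1}$ and $(\bigcap_i B_i) \circ (\bigcap_i B_i) \subseteq \bigcap_i (B_i \circ B_i)$, is also correct.

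The gap is in the \emph{necessity} half of part 2. You assert that the four subbase conditions on $\mathcal{B}$ match the five base conditions on $\mathcal{B}^{\cap}$, but you only prove one implication. The converse does not follow: if $\mathcal{B}$ is a subbase, the base conditions on $\mathcal{B}^{\cap}$ give you, for $B \in \mathcal{B}$, only a \emph{finite intersection} of members of $\mathcal{B}$ contained in $B^{-1}$ (respectively with square contained in $B$), not a single member as the stated condition demands. In fact the only-if direction of part 2 is false as literally stated: take $X = \{a,b,c\}$ and $\mathcal{B} = \{B_1, B_2\}$ with $B_1 = \Delta_X \cup \{(a,b)\}$ and $B_2 = \Delta_X \cup \{(a,c)\}$. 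Then $B_1 \cap B_2 = \Delta_X$, so $\mathcal{B}^{\cap}$ contains $\Delta_X$ and is a base of the discrete uniformity $\set{E \subseteq X \times X \suchthat \Delta_X \subseteq E}$, whence $\mathcal{B}$ is a subbase of that uniformity; yet neither $B_1$ nor $B_2$ is contained in $B_1^{-1} = \Delta_X \cup \{(b,a)\}$, so the third subbase condition fails. You should either weaken the third and fourth conditions of part 2 to require only a member of $\mathcal{B}^{\cap}$ contained in $B^{-1}$, respectively with square contained in $B$ --- with that reading your reduction works verbatim in both directions --- or restrict part 2 to the sufficiency claim, which is all that is used elsewhere.
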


  \begin{definition}
    Let $(X, \mathcal{U})$ and $(X', \mathcal{U}')$ be two uniform spaces and let $f$ be a map from $X$ to $X'$. The map $f$ is called \define{uniformly continuous} if and only if
    \begin{equation*}
      \ForEach E' \in \mathcal{U}' \Exists E \in \mathcal{U} \SuchThat (f \times f)(E) \subseteq E',
    \end{equation*}
    where
    \begin{align*}
      f \times f \from X \times X &\to     X' \times X',\\
                       (x_1, x_2) &\mapsto (f(x_1), f(x_2)).
    \end{align*}
  \end{definition}

  \begin{lemma}
    Let $(X, \mathcal{U})$ and $(X', \mathcal{U}')$ be two uniform spaces, let $f$ be a map from $X$ to $X'$, and let $\mathcal{B}'$ be a base or subbase of $\mathcal{U}'$. The map $f$ is uniformly continuous if and only if
    \begin{equation*}
      \ForEach E' \in \mathcal{B}' \Holds (f \times f)^{-1}(E') \in \mathcal{U}.
    \end{equation*}
  \end{lemma}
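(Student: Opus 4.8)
The plan is to reduce everything to the observation that, for a subset $E \subseteq X \times X$ and a subset $E' \subseteq X' \times X'$, the containment $(f \times f)(E) \subseteq E'$ is equivalent to $E \subseteq (f \times f)^{-1}(E')$. Combined with the closure of a uniformity under passing to supersets, this shows that uniform continuity of $f$ is equivalent to the assertion that $(f \times f)^{-1}(E') \in \mathcal{U}$ for \emph{every} $E' \in \mathcal{U}'$. Indeed, if $f$ is uniformly continuous and $E' \in \mathcal{U}'$, then some $E \in \mathcal{U}$ satisfies $E \subseteq (f \times f)^{-1}(E')$, so the superset $(f \times f)^{-1}(E')$ lies in $\mathcal{U}$ as well; conversely, if $(f \times f)^{-1}(E') \in \mathcal{U}$, then this very set serves as the entourage $E$ required by the definition. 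I would isolate this equivalence first, because both directions of the lemma rest on it.

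For the forward direction of the lemma, I would simply note that every element of $\mathcal{B}'$ is an entourage of $X'$, since a base or subbase is by definition a subset of $\mathcal{U}'$. Hence uniform continuity of $f$ immediately yields $(f \times f)^{-1}(E') \in \mathcal{U}$ for each $E' \in \mathcal{B}'$ through the equivalence just established.

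The converse splits along the two cases. If $\mathcal{B}'$ is a base, I would take an arbitrary $E' \in \mathcal{U}'$, choose $B' \in \mathcal{B}'$ with $B' \subseteq E'$, observe $(f \times f)^{-1}(B') \subseteq (f \times f)^{-1}(E')$, and invoke the hypothesis $(f \times f)^{-1}(B') \in \mathcal{U}$ together with closure under supersets to conclude $(f \times f)^{-1}(E') \in \mathcal{U}$; uniform continuity then follows. If $\mathcal{B}'$ is a subbase, I would first upgrade the hypothesis from $\mathcal{B}'$ to the base $\mathcal{B}''$ consisting of all finite intersections of members of $\mathcal{B}'$: for $B'' = \bigcap_{i=1}^{n} B_i$ with each $B_i \in \mathcal{B}'$, preimages commute with intersections, so $(f \times f)^{-1}(B'') = \bigcap_{i=1}^{n} (f \times f)^{-1}(B_i)$, and each factor lies in $\mathcal{U}$ by hypothesis, whence the finite intersection lies in $\mathcal{U}$ by closure of a uniformity under finite intersections. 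This reduces the subbase case to the base case already handled.

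The argument is largely bookkeeping, each step being a direct appeal to a defining property of uniformities or of base and subbase. The only step requiring a small combination of facts is the subbase reduction, where I must use both the commutation of preimage with intersection and the closure of $\mathcal{U}$ under finite intersections; I expect this to be the main, though still routine, obstacle.
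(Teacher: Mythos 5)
The paper states this lemma without proof --- it belongs to the background appendix on uniformities, which defers to Appendix~B of the cited monograph of Ceccherini-Silberstein and Coornaert --- so there is no in-paper argument to compare against. Your proof is correct and is the standard one: the equivalence $(f \times f)(E) \subseteq E' \iff E \subseteq (f \times f)^{-1}(E')$, combined with closure of $\mathcal{U}$ under supersets and under finite intersections, carries both directions, and your reduction of the subbase case to the base case via commutation of preimages with intersections is exactly right.
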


  \begin{corollary}
    Let $(X, \mathcal{U})$ and $(X', \mathcal{U}')$ be two uniform spaces, let $f$ be a map from $X$ to $X'$, and let $\mathcal{B}$ and $\mathcal{B}'$ be two bases or subbases of $\mathcal{U}$ and $\mathcal{U}'$ respectively. The map $f$ is uniformly continuous if and only if
    \begin{equation*}
      \ForEach E' \in \mathcal{B}' \Exists E \in \mathcal{B} \SuchThat (f \times f)(E) \subseteq E'.
    \end{equation*}
  \end{corollary}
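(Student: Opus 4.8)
The plan is to reduce the corollary to the preceding lemma by converting its preimage condition into a direct-image condition. The key elementary fact is the adjunction $(f \times f)(E) \subseteq E' \iff E \subseteq (f \times f)^{-1}(E')$, valid for all subsets $E$ of $X \times X$ and $E'$ of $X' \times X'$.

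First I would recall from the preceding lemma that $f$ is uniformly continuous if and only if $(f \times f)^{-1}(E') \in \mathcal{U}$ for each $E' \in \mathcal{B}'$; this already handles both the base and the subbase case for $\mathcal{B}'$, because taking preimages commutes with finite intersection. Fixing such an $E'$ and abbreviating $U = (f \times f)^{-1}(E')$, the adjunction turns the sought condition $\Exists E \in \mathcal{B} \SuchThat (f \times f)(E) \subseteq E'$ into the equivalent condition $\Exists E \in \mathcal{B} \SuchThat E \subseteq U$. Thus it suffices to prove, for each such $U$, that $U \in \mathcal{U}$ if and only if some member of $\mathcal{B}$ is contained in $U$; chaining this equivalence over all $E' \in \mathcal{B}'$ and combining it with the lemma then yields the corollary.

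For the implication ``some $E \in \mathcal{B}$ with $E \subseteq U$ implies $U \in \mathcal{U}$'' I would use only that $\mathcal{B} \subseteq \mathcal{U}$ together with the upward-closure axiom of a uniformity; this is valid whether $\mathcal{B}$ is a base or a subbase, and it already gives the reverse implication of the corollary. For the converse, when $\mathcal{B}$ is a base the defining property of a base supplies directly some $E \in \mathcal{B}$ with $E \subseteq U$ whenever $U \in \mathcal{U}$, which closes the forward implication in that case.

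The main obstacle is the case in which $\mathcal{B}$ is only a subbase of $\mathcal{U}$: then $U \in \mathcal{U}$ merely guarantees that a finite intersection $\bigcap_{i=1}^{n} B_i$ of members of $\mathcal{B}$ is contained in $U$, and I would approach it by first applying the base case to the base of finite intersections and then tracking the containment back down to the subbasic level. The delicate point, which I expect to require the most care, is precisely that a single subbasic entourage need not lie inside $U$, so the reduction from the finite-intersection base to one element of $\mathcal{B}$ is exactly where the argument must be examined most closely; once that reduction is made precise, the two implications assemble into the stated equivalence.
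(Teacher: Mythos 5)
Your reduction via the adjunction $(f \times f)(E) \subseteq E' \iff E \subseteq (f\times f)^{-1}(E')$ and the preceding lemma is sound, and it correctly settles sufficiency for arbitrary subbases $\mathcal{B}, \mathcal{B}'$ as well as necessity when $\mathcal{B}$ is a base. But the step you defer at the end --- passing from a finite intersection $\bigcap_{i=1}^{n} B_i \subseteq U$ back down to a single subbasic $B \in \mathcal{B}$ with $B \subseteq U$ --- is not a delicate point to be ``made precise'': it is false, and with it the forward implication of the corollary fails under the literal reading in which $\mathcal{B}$ is merely a subbase. Concretely, let $X = \{1,2,3\}$ and let $B_1 = \Delta_X \cup \{(1,2),(2,1)\}$ and $B_2 = \Delta_X \cup \{(2,3),(3,2)\}$ be the equivalence relations with blocks $\{1,2\},\{3\}$ and $\{1\},\{2,3\}$ respectively. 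Each $B_i$ contains $\Delta_X$, is symmetric, and satisfies $B_i \circ B_i = B_i$, so $\mathcal{B} = \{B_1, B_2\}$ is a subbase; since $B_1 \cap B_2 = \Delta_X$, the uniformity $\mathcal{U}$ it generates is the discrete one. Now take $(X',\mathcal{U}') = (X,\mathcal{U})$ with base $\mathcal{B}' = \{\Delta_X\}$ and $f = \mathrm{id}_X$. The map $f$ is uniformly continuous, yet neither $B_1 \subseteq \Delta_X$ nor $B_2 \subseteq \Delta_X$, so there is no $E \in \mathcal{B}$ with $(f \times f)(E) \subseteq \Delta_X$, and the stated condition fails.

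The upshot is that the two directions carry different hypotheses: your sufficiency argument (up-closure of $\mathcal{U}$ plus the preceding lemma applied to the subbase $\mathcal{B}'$) needs nothing more than $\mathcal{B} \subseteq \mathcal{U}$, whereas necessity genuinely requires $\mathcal{B}$ to be a base --- or the condition must be weakened to allow a finite intersection $\bigcap_{i=1}^{n} E_i$ of members of $\mathcal{B}$ in place of a single $E$. The paper states this corollary in its appendix without proof (it is quoted from standard uniform-space theory), and it should be read with $\mathcal{B}$ a base of $\mathcal{U}$, in which case your argument is complete and correct. As submitted, however, the proposal leaves open exactly the step that cannot be closed, so it does not prove the statement in the generality you set out to cover; you should either restrict $\mathcal{B}$ to be a base or record the counterexample above and reformulate the criterion for subbases.
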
 

  \begin{lemma}
    Let $(X, \mathcal{U})$ and $(X', \mathcal{U}')$ be two uniform spaces and let $f \from X \to X'$ be a uniformly continuous map. The map $f$ is continuous, where $X$ and $X'$ are equipped with the topologies induced by $\mathcal{U}$ and $\mathcal{U}'$ respectively.
  \end{lemma}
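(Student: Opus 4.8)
The plan is to establish continuity of $f$ by showing that the preimage under $f$ of every open subset of $X'$ is open in $X$, working directly from the neighbourhood description of the two induced topologies and from the defining inequality of uniform continuity.

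First I would fix an open set $O' \subseteq X'$ and an arbitrary element $x \in f^{-1}(O')$, so that $f(x) \in O'$. Since $O'$ is open for the topology induced by $\mathcal{U}'$, there is an entourage $E' \in \mathcal{U}'$ with $E'[f(x)] = O'$, and in particular $E'[f(x)] \subseteq O'$, which is all that the argument requires. Because $f$ is uniformly continuous, I would then choose an entourage $E \in \mathcal{U}$ with $(f \times f)(E) \subseteq E'$.

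The core step is to verify that the $x$-th column $E[x]$ is contained in $f^{-1}(O')$. Indeed, for each $x' \in E[x]$ we have $(x', x) \in E$, hence $(f(x'), f(x)) = (f \times f)(x', x) \in E'$, so $f(x') \in E'[f(x)] \subseteq O'$ and therefore $x' \in f^{-1}(O')$. This produces, for every point $x$ of $f^{-1}(O')$, an entourage whose $x$-th column lies inside $f^{-1}(O')$, which is exactly the assertion that $f^{-1}(O')$ is a neighbourhood of each of its points.

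The one place that will need care is that the induced topology in the excerpt is phrased through the exact equality $E[x] = O$ rather than the inclusion $E[x] \subseteq O$. I would observe that these two phrasings define the same open sets: given any entourage $E$ with $E[x] \subseteq O$, the enlarged relation $E \cup (O \times \set{x})$ is again an entourage, because entourages are closed under passing to supersets, and its $x$-th column equals $O$ exactly while every other column is unchanged. Applying this with $O = f^{-1}(O')$ and replacing $E$ by $E \cup (f^{-1}(O') \times \set{x})$ yields an entourage realising $E[x] = f^{-1}(O')$, so $f^{-1}(O')$ is open in the precise sense of the stated definition. I expect this reconciliation of the two equivalent formulations of the induced topology to be the only genuine subtlety; everything else is a routine unwinding of the definitions of column, uniform continuity, and induced topology.
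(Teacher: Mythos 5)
Your proof is correct. The paper states this lemma without proof --- it belongs to the standard background on uniformities that the appendix defers to Appendix~B of Ceccherini-Silberstein and Coornaert --- so there is no argument of the paper's to compare against; what you give is the standard unwinding of the definitions. You also correctly identify the one genuine subtlety, namely that the paper's induced topology demands an entourage $E$ with $E[x] = O$ rather than merely $E[x] \subseteq O$, and your repair (replacing $E$ by $E \cup (f^{-1}(O') \times \set{x})$, which is again an entourage because uniformities are closed under supersets, and whose $x$-th column is exactly $f^{-1}(O')$ since $E[x] \subseteq f^{-1}(O')$ while all other columns are unchanged) is exactly right.
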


  \begin{theorem}
    Let $(X, \mathcal{U})$ and $(X', \mathcal{U}')$ be two uniform spaces such that $X$, equipped with the topology induced by $\mathcal{U}$, is compact. Each continuous map $f \from X \to X'$ is uniformly continuous.
  \end{theorem}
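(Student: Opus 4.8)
The plan is to establish the uniform-space analogue of the Heine--Cantor theorem: on a compact domain every continuous map into a uniform space is automatically uniformly continuous. Fix an arbitrary entourage $E' \in \mathcal{U}'$; the goal is to exhibit an entourage $E \in \mathcal{U}$ with $(f \times f)(E) \subseteq E'$. First I would shrink the target: by the uniformity axioms there is a symmetric $W' \in \mathcal{U}'$ (take any witness with $W'' \circ W'' \subseteq E'$ and replace it by $W'' \cap (W'')^{-1}$, which lies in $\mathcal{U}'$ and is symmetric) such that $W' \circ W' \subseteq E'$. The symmetry together with this single composition step will play the role of the triangle inequality in the metric proof.

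Next I would localise using continuity. For each point $x \in X$ the column $W'[f(x)]$ is a neighbourhood of $f(x)$ in the topology induced by $\mathcal{U}'$, so continuity of $f$ at $x$ sends some neighbourhood of $x$ into it; since in a uniform space the columns $U[x]$, $U \in \mathcal{U}$, form a neighbourhood base at $x$, I obtain an entourage $U_x \in \mathcal{U}$ with $f(U_x[x]) \subseteq W'[f(x)]$. Halving again, I pick a symmetric $V_x \in \mathcal{U}$ with $V_x \circ V_x \subseteq U_x$, and record that $V_x \subseteq V_x \circ V_x \subseteq U_x$ because $\Delta_X \subseteq V_x$.

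Now I would invoke compactness. Each $x$ lies in the interior of its neighbourhood $V_x[x]$, so the family $\{V_x[x]\}_{x \in X}$ covers $X$; by compactness finitely many columns $V_{x_1}[x_1], \dots, V_{x_n}[x_n]$ already cover $X$. Setting $E = \bigcap_{i=1}^n V_{x_i}$, which is an entourage since finite intersections of entourages are entourages, I would verify $(f \times f)(E) \subseteq E'$ as follows. Given $(a,b) \in E$, choose $i$ with $a \in V_{x_i}[x_i]$, that is $(a, x_i) \in V_{x_i}$; then $(a, x_i) \in U_{x_i}$ gives $(f(a), f(x_i)) \in W'$, while symmetry of $V_{x_i}$ applied to $(a,b) \in E \subseteq V_{x_i}$ yields $(b, x_i) \in V_{x_i} \circ V_{x_i} \subseteq U_{x_i}$, whence $(f(b), f(x_i)) \in W'$. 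Composing through $f(x_i)$ and using symmetry of $W'$ gives
\[
  (f(a), f(b)) \in W' \circ W' \subseteq E',
\]
so $(f \times f)(E) \subseteq E'$, as required, and $f$ is uniformly continuous.

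The chain of compositions and symmetrisations is routine bookkeeping; the one genuinely load-bearing step, and the main obstacle, is the translation between the topological hypothesis (continuity of $f$) and the entourage language, namely the extraction of the entourage $U_x$ from continuity at $x$. This rests on the dictionary fact that in the induced topology the columns $U[x]$, $U \in \mathcal{U}$, constitute a neighbourhood base at $x$ with $x$ in the interior of each such column; once this is in place, compactness and the halving trick finish the argument mechanically.
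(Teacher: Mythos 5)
Your proof is correct. Note, however, that the paper does not actually prove this statement: it appears in the appendix as one of several standard facts about uniform spaces quoted without proof from Appendix~B of the cited monograph of Ceccherini-Silberstein and Coornaert, so there is no in-paper argument to compare against. What you give is the standard Heine--Cantor argument transported to uniform spaces, and every step checks out: the symmetrisation $W' = W'' \cap (W'')^{-1}$, the extraction of $U_x$ from continuity via the fact that the columns $U[x]$ form a neighbourhood base at $x$ (which you rightly identify as the only point where the topological and uniform languages must be reconciled), the halving to $V_x$, and the final chase through $(a,x_i)$ and $(b,x_i)$ into $W' \circ W' \subseteq E'$. The only cosmetic remark is that compactness should formally be applied to the open cover by the interiors of the columns $V_x[x]$ rather than to the columns themselves; since each $x$ lies in the interior of $V_x[x]$, this changes nothing. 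Your proof is a complete and self-contained replacement for the external citation.
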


  \begin{definition}
    Let $(X, \mathcal{U})$ and $(X', \mathcal{U}')$ be two uniform spaces and let $f$ be a map from $X$ to $X'$. The map $f$ is called \define{uniform isomorphism} if and only if it is bijective, and both $f$ and $f^{-1}$ are uniformly continuous.
  \end{definition}

  \begin{lemma} 
    Let $(X, \mathcal{U})$ and $(X', \mathcal{U}')$ be two uniform spaces such that $X$, equipped with the topology induced by $\mathcal{U}$, is compact and $X'$, equipped with the topology induced by $\mathcal{U}'$, is Hausdorff. Each continuous and bijective map $f \from X \to X'$ is a uniform isomorphism.
  \end{lemma}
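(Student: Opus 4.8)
The plan is to reduce everything to the preceding theorem, which asserts that a continuous map out of a \emph{compact} uniform space is automatically uniformly continuous. Since $f$ is bijective by hypothesis, it suffices to verify that both $f$ and $f^{-1}$ are uniformly continuous; the conclusion then follows directly from the definition of uniform isomorphism.

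First I would dispose of $f$ itself. It is continuous with respect to the induced topologies, and $X$ is compact, so the preceding theorem immediately gives that $f$ is uniformly continuous. This costs nothing beyond a citation.

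Next I would observe that $X'$ is compact. Indeed, $f$ is continuous and, being bijective, surjective, so $X' = f(X)$ is the continuous image of the compact space $X$ and hence compact. This is the key observation that makes the compactness theorem applicable a second time, now to $f^{-1} \from X' \to X$, whose domain $X'$ has just been shown to be compact.

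The remaining and main step is to show that $f^{-1}$ is continuous with respect to the induced topologies; once this is done, compactness of $X'$ together with the preceding theorem yields uniform continuity of $f^{-1}$, and we are finished. To prove continuity of $f^{-1}$ I would invoke the classical fact that a continuous bijection from a compact space onto a Hausdorff space is a homeomorphism: continuity of $f^{-1}$ is equivalent to $f$ being a closed map, and $f$ is closed because each closed subset $C$ of the compact space $X$ is itself compact, its image $f(C)$ is compact as a continuous image, and compact subsets of the Hausdorff space $X'$ are closed. The Hausdorff hypothesis on $X'$ enters precisely at this last implication, and this closed-map argument is the crux of the proof.
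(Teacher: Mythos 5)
Your proof is correct. The paper states this lemma without proof --- it belongs to the appendix of standard facts about uniformities, deferred to Appendix~B of the Ceccherini-Silberstein--Coornaert monograph --- and your argument is exactly the standard one: apply the preceding compactness theorem to $f$; note $X'=f(X)$ is compact; show $f^{-1}$ is continuous via the closed-map argument (closed in compact is compact, continuous image of compact is compact, compact in Hausdorff is closed); then apply the compactness theorem once more to $f^{-1}$. All hypotheses are used where you say they are, and there is nothing to add.
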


\end{document}